\DeclareMathOperator*{\bigplus}{\scalerel*{+}{\sum}}
\newtheorem{theorem}{Theorem}
\newtheorem{lemma}[theorem]{Lemma}
\newtheorem{conjecture}[theorem]{Conjecture}
\newcommand{\E}{\ensuremath{\mathbb E}}
\newcommand{\R}{\ensuremath{\mathbb R}}
\newcommand{\F}{\ensuremath{\psi}}
\newcommand{\tw}{\tilde{w}}
\newcommand{\lab}{\label}  \newcommand{\ra}{\ensuremath{\rightarrow}}  \def\a{{\mathbf{\alpha}}} \def\de{{\mathbf{\delta}}} \def\De{{{\Delta}}}  
 \def\var{{\mathrm{var}}} \def\beq{\begin{eqnarray}} \def\eeq{\end{eqnarray}} \def\ben{\begin{enumerate}}
\def\een{\end{enumerate}} \def\bit{\begin{itemize}}
\def\bel{\begin{lemma}}
\def\eel{\end{lemma}}
\def\eit{\end{itemize}} \def\beqs{\begin{eqnarray*}} \def\eeqs{\end{eqnarray*}} \def\bel{\begin{lemma}} \def\eel{\end{lemma}}
\newcommand{\N}{\mathbb{N}} \newcommand{\Z}{\mathbb{Z}}  \newcommand{\C}{\mathcal{C}} \newcommand{\CC}{\mathcal{C}}
\newcommand{\T}{\mathbb{T}} \newcommand{\A}{\mathbb{A}}     \renewcommand{\b}{\mathbf{b}} 
\newcommand{\tb}{\tilde{\mathbf{b}}}
  \newcommand{\II}{\mathcal{I}}  \newcommand{\p}{\mathbb{P}}
   \newcommand{\e}{\mathbf{e}} 
\newcommand{\LL}{\Delta}  \newcommand{\la}{\lambda}  
   \def\eps{{\epsilon}} \def\A{{\mathcal{A}}} \def\ie{i.\,e.\,} 
\def\vol{\mathrm{vol}}
\newcommand{\tP}{\tilde{P}}
\newcommand{\wn}{w^{(n)}}
\newcommand{\wone}{w^{(n_1)}}
\newcommand{\La}{\Lambda}
\renewcommand{\L}{\mathbb{L}} 
\renewcommand{\i}{k}
\renewcommand{\j}{\ell}
\renewcommand{\L}{\mathbb L}
\renewcommand{\C}{\mathcal C}
\newcommand{\f}{\mathbf{f}}
\newtheorem{thm}{Theorem}[section]
\newtheorem{cor}[thm]{Corollary}
\newtheorem{lem}[thm]{Lemma}
\newtheorem{claim}[thm]{Claim}
\theoremstyle{definition}
\newtheorem{defn}[thm]{Definition}
\theoremstyle{remark}
\numberwithin{equation}{section}
\begin{document}
%\begin{frontmatter}

\title{Random discrete concave functions on an equilateral lattice with periodic Hessians}
%\runtitle{Random concave functions}
%\thankstext{T1}{Footnote to the title with the `thankstext' command.}

%\begin{aug}

\author{Hariharan Narayanan\\
School of Technology and Computer Science\\
Tata Institute for Fundamental Research, \\
Mumbai 400005, India.\\
hariharan.narayanan@tifr.res.in}
%\end{aug}
\maketitle
\begin{abstract}
Motivated by connections to random matrices, Littlewood-Richardson coefficients and tilings, we study random discrete concave functions on an equilateral lattice. We show that such functions having a periodic Hessian of a  fixed average value $- s = - (s_0, s_1, s_2) \in \R_{< 0}^3$ concentrate around a quadratic function under certain conditions.
We consider the set of all discrete concave functions $g$ (\ie functions whose piecewise linear extensions are concave) on an equilateral lattice $\mathbb L$ that when shifted by an element of $n \mathbb L$ have a periodic discrete Hessian, with period $n \mathbb L$. 
We add a convex quadratic of Hessian $s$; the sum is then periodic with period $n \mathbb L$, and view this as a mean zero function $g$ on the set of vertices $V(\T_n)$ of a torus $\T_n := \frac{\Z}{n\Z}\times \frac{\Z}{n\Z}$ whose Hessian is dominated by $s$. The resulting set of semiconcave functions forms a convex polytope $P_n(s)$. 
The $\ell_\infty$ diameter of $P_n(s)$ is shown to be bounded below by $c(s) n^2$, where $c(s)$ is a positive constant depending only on $s$. 
We show that the surface tension $\sigma(s) = - \lim_{n\ra \infty} \left(\frac{1}{n^2}\right)\log |P_n(s)|$ is well defined and convex; in fact that $\exp(- \sigma(s))$ is concave.
Our main result is that when $s$ is such that a subgradient $w = (w_0, w_1, w_2)$ of $\sigma(s)$ belongs to the cone
\beqs w_0^2 + w_1^2 + w_2^2 < 2\left(w_0 w_1 + w_1 w_2 + w_2 w_0\right),\eeqs (which happens to be true for when $s_0 = s_1 \leq s_2$,)  then for any $\eps > 0,$ 
 $$\lim_{n \ra 0} \p\left[\|g\|_\infty > n^{\frac{7}{4} + \eps}\right] = 0$$  where $g$ is sampled from the uniform measure on $P_n(s)$. We also prove concentration bounds 
if the surface tension at $s$ is strictly convex. Each $g \in P_n(s)$ corresponds to a kind of  honeycomb. We obtain concentration results for these as well. 
Along the way, we provide an upper bound on the volume of $P_n(s)$. This bound involves the determinant of a Laplacian on the torus. 
\end{abstract}

%\begin{keyword}[class=MSC]
%\kwd[Primary ]{60K35}
%\kwd{60K35}
%\kwd[; secondary ]{60K35}
%\end{keyword}

%\end{frontmatter}

%\address{}%
%\email{}%
%
%\Thanks{}%
%\subjclass{}%
%\keywords{}%
%
%%\date{}%
%%\Dedicatory{}%
%%\commby{}%
% ----------------------------------------------------------------
%% ----------------------------------------------------------------
\tableofcontents

\section{Introduction}\lab{sec:intro}

\subsection{Motivation from Littlewood-Richardson coefficients}
Littlewood-Richardson coefficients play an important role in the representation theory of  the general linear groups. Among other interpretations, they count the number of tilings of certain domains using squares and equilateral triangles  \cite{squaretri}. 
Let $\la, \mu, \nu$ be vectors in $\Z^n$ whose entries are non-increasing non-negative integers. Let the $\ell_1$ norm of a vector $\a \in \R^n$ be denoted $|\a|$ and let $$|\la| + |\mu| = |\nu|.$$ Take an equilateral triangle $\Delta$ of side $1$. Tessellate it with unit equilateral triangles of side $1/n$. Assign boundary values to $\Delta$ as in Figure~\ref{fig:tri3}; Clockwise, assign the values $0, \la_1, \la_1 + \la_2, \dots, |\la|, |\la| + \mu_1, \dots, |\la| + |\mu|.$ Then anticlockwise, on the horizontal side, assign  $$0, \nu_1, \nu_1 + \nu_2, \dots, |\nu|.$$

Knutson and Tao defined this hive model for Littlewood-Richardson coefficients in \cite{KT1}. They showed that the Littlewood-Richardson coefficient
$c_{\la\mu}^\nu$ is given by the number of ways of assigning integer values to the interior nodes of the triangle, such that the piecewise linear extension to the interior of $\Delta$ is a concave function $f$ from $\Delta$ to $\R$.  
Such  an integral ``hive" $f$ can be described as an integer point in a certain polytope known as a hive polytope. The volumes of these polytopes shed light on the asymptotics of Littlewood-Richardson coefficients \cite{Nar, Okounkov, Greta}. Additionally, 
they appear in certain calculations  in free probability \cite{KT2, Zuber}. Indeed,
the volume of the polytope of all real hives with fixed boundaries $\la, \mu, \nu$ is equal, up to known multiplicative factors involving Vandermonde determinants, to the probability density of obtaining a Hermitian matrix with spectrum $\nu$ when two Haar random Hermitian matrices with spectra  $\la$ and $\mu$   are added \cite{KT2}. 

\begin{comment}
\begin{figure}\label{fig:tri}
\begin{center}
\includegraphics[scale=0.25]{triangle.pdf}
\caption{Hive model for Littlewood-Richardson coefficients}
\end{center}
\end{figure}
\end{comment}

\begin{figure}\label{fig:tri3}
\begin{center}
\includegraphics[scale=0.40]{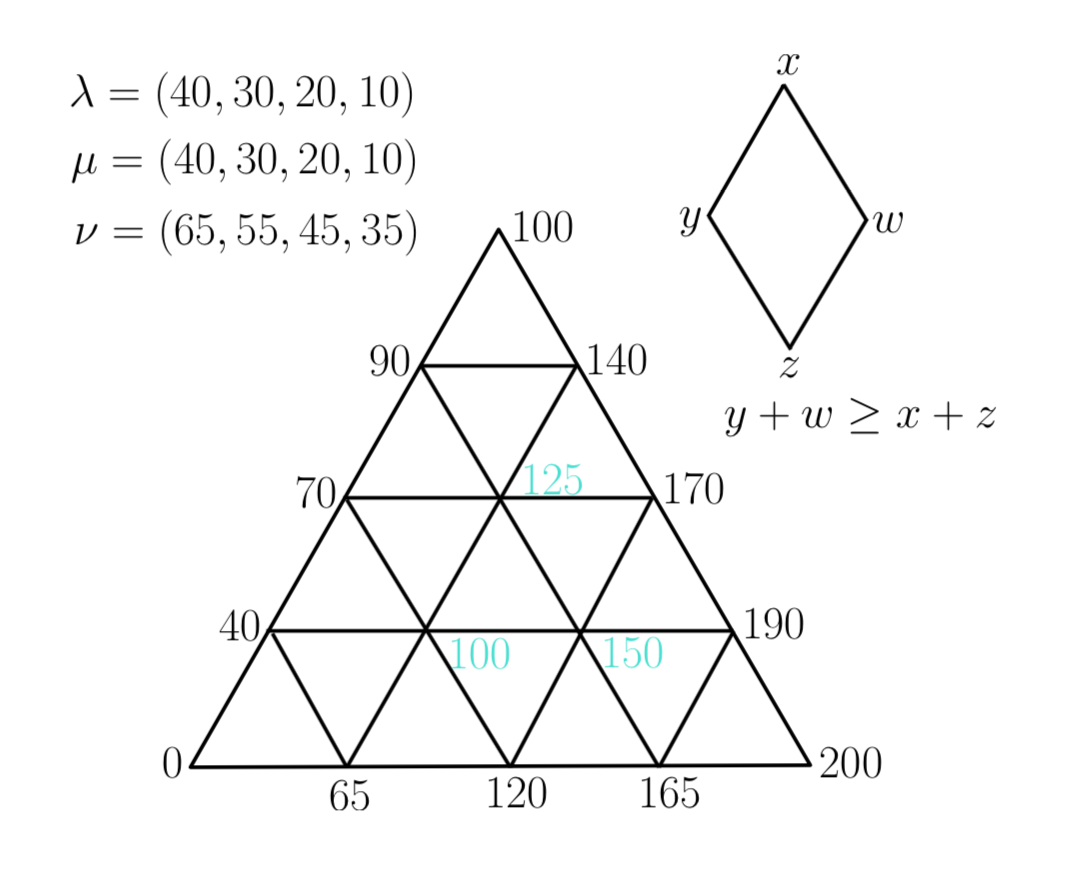}
\caption{Values taken at interior vertices in the hive model}
\end{center}
\end{figure}
Corresponding to every real hive, is a gadget known as a honeycomb, which is a hexagonal tiling. The positions of the lines corresponding to the semi-infinite rays are fixed by the boundary data $\la, \mu$ and $\nu$, with each segment being parallel to one of the sides of a regular hexagon. One obtains a random honeycomb from a random hive by mapping the gradient of the hive on each of the unit equilateral triangles to a point in $\R^2$. This point becomes a vertex of the honeycomb.
\begin{figure}\label{fig:honey}
\begin{center}
\includegraphics[scale=0.80]{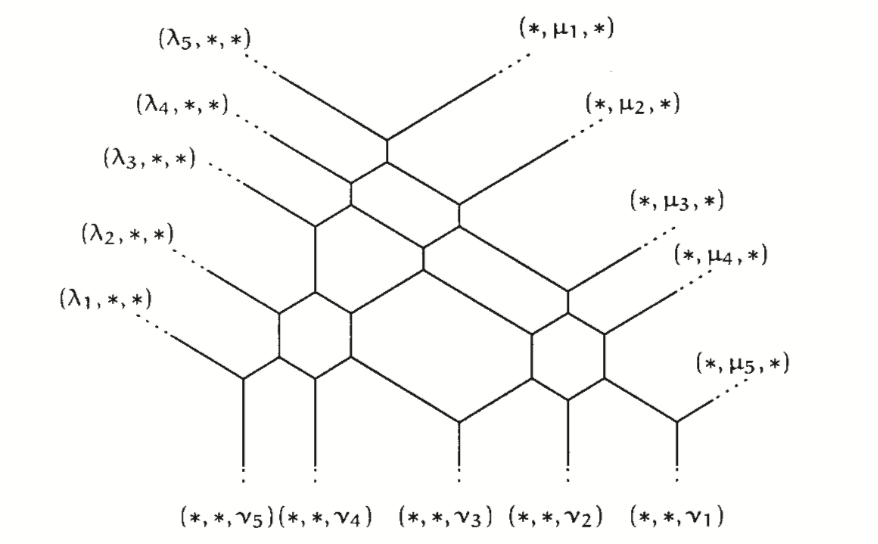}
\caption{A honeycomb, from Knutson and Tao \cite{KT2}}
\end{center}
\end{figure}

The question of studying the structure of a typical real hive in a hive polytope, sampled from the Lebesgue measure is closely linked to the question of evaluating the asymptotic value of a Littlewood-Richardson coefficient for $GL_n(\mathbb C)$ as $n \ra \infty$ and $\la, \mu$ and $\nu$ tend to continuous monotonically decreasing functions in a certain fashion.
In order to study the scaling limits of random surfaces \cite{Scott}, it has proven beneficial to first examine the situation with periodic boundary conditions \cite{CohnKenyonPropp}. These structures correspond to random periodic honeycombs, where the periodicity is at a scale that tends to infinity. The results of this paper give the first results on concentration phenomena for these objects (see Subsection~\ref{ssec:honey}).

\subsection{Overview}

We consider the set of all (discrete) concave functions on an equilateral lattice $\mathbb L$ that when shifted by an element of $n \mathbb L$, incur addition by a  linear function (this condition is equivalent to the periodicity of the Hessian). We subtract a quadratic of the same Hessian $- s$; the difference is then periodic with period $n \mathbb L$, and view this as a mean zero function $g$ on the vertices $V(\T_n)$ of a torus $\T_n := \frac{\Z}{n\Z}\times \frac{\Z}{n\Z}$ whose Hessian is bounded above by $s$. The resulting set of functions forms a convex polytope $P_n(s)$. 
 We show in Lemma~\ref{lem:diameter} that 
the $\ell_\infty$ diameter of $P_n(s)$ is bounded below by $c(s) n^2$, where $c(s)$ is a positive constant depending only on $s$. 
We prove an upper bound on the differential entropy per vertex in terms of a determinant of a Laplacian. 
\begin{comment}
We evaluate this upper bound  asymptotically using a formula due to Richard Kenyon. We show this this implies that  
\end{comment}
Suppose exists a superdifferential $w$ of $\mathbf{f}$ at $s$ such that 
\beq w_0^2 + w_1^2 + w_2^2 < 2\left(w_0 w_1 + w_1 w_2 + w_2 w_0\right).\lab{eq:w}\eeq
 We show  in Theorem~\ref{thm:2} that concentration in $\ell_\infty-$norm takes place for average Hessian $s$ if there exists a superdifferential $w$ of $\mathbf{f}$ at $s$ such that (\ref{eq:w}) holds.
Theorem~\ref{thm:2}, also provides quantitative bounds, namely that for any positive $\eps$, $$\lim_{n \ra 0} \p\left[\|g\|_\infty > n^{\frac{7}{4} + \eps}\right] = 0,$$  if $g$ is sampled from the uniform measure on $P_n(s)$.

In the rest of this section, we outline the main ideas that go into proving  Theorem~\ref{thm:2}.
We first show that $\f_n(s) := |P_n(s)|^\frac{1}{n^2 - 1}$ tends to a limit $\f(s)$ (which by the Brunn-Minkowski inequality is concave) as $n \ra \infty$, and further, 
that there is a universal constant $C > 0$ such that for all $n \geq 2$, $$ \left|\frac{\f_n(s)}{\f(s)} - 1\right| \leq \frac{ C\ln n}{n}.$$ We identify a convex set $K \subseteq \R^{V(\T_n)}$ consisting of ``tame" functions whose discrete $L_2^2$ Sobolev seminorm and discrete $\dot{C}^2$ seminorm are respectively bounded above by certain quantities. We show that the probability measure of $P_n(s) \setminus K$ is negligible, and focus our attention on $P_n(s) \cap K$. We appeal to a theorem of Bronshtein, which states that 
the set of Lipschitz, bounded convex functions on a bounded domain of dimension $d$, can be covered using $\exp(C \eps_{0.5}^{-\frac{d}{2}})$ $L^\infty-$balls of radius $\eps_{0.5}$. It follows that $P_n(s) \cap K$ can be covered by $\exp(C \eps_{0.5}^{-1})$  $\ell_\infty-$balls of radius $\eps_{0.5} n^2.$ Without loss of generality, doubling $\eps_{0.5}$ if necessary, we may assume that these balls are all centered in $P_n(s) \cap K$. 

We next let $\rho$ be a probability measure on $V(\T_n)$, and prove using the Brunn-Minkowski inequality that for any $g \in \R^{V(\T_n)}$, the measure of the $l_\infty-$ball $B_\infty(g, \eps_{0.5}n^2)$ of radius $\eps_{0.5}$ around $g$ is less or equal to the measure of $B_\infty(\rho\ast g, \eps_{0.5}n^2)$, where $\ast$ denotes convolution on $\T_n$. For $g \in P_n(s) \cap K$, we show the existence of a character $\psi_{k_0\ell_0}$ of $\T_n$, where $\psi_{k_0\ell_0}(i, j) = \exp\left(\frac{2\imath\pi(k_0 i + \ell_0 j)}{n}\right)$ such that, if we set $\rho$ to $ \frac{2 + \psi_{k_0\ell_0} + \psi_{-k_0\, - \ell_0}}{2},$ then $\rho \ast g = \Re(\theta_{k_0\ell_0}\psi_{k_0\ell_0})$ is the real part of a complex exponential of low frequency and large amplitude. This allows us to reduce our problem to one of bounding from above the probability measure of $B_\infty(\Re(\theta_{k_0\ell_0}\psi_{k_0\ell_0}), \eps_{0.5}n^2)$, where we have an a priori lower bound on $\theta_{k_0\ell_0}$ and 
an a priori upper bound on $k_0^2 + \ell_0^2.$ 

In order to do this, we partition $\T_n$ into squares $\square_{ij}$ of sidelength $n_1 \approx \eps_1 n$, with a small residual set of vertices and define a set of boundary vertices $\b$ that is the union of all the sides of all the squares. This is a ``double layer" boundary, and conditioning on the values taken by $g$ on $\b$ , results in the values taken by $g$ on the different $\square_{ij}$ being completely decoupled. In particular, this allows us to bound from above the measure of $B_\infty(\Re(\theta_{k_0\ell_0}\psi_{k_0\ell_0}), \eps_{0.5}n^2)\cap K $, by an integral over $\R^{\b}$ of the product of the measures of the projections on the different $\R^{\square_{ij}},$ of certain sections of this polytope defined by the conditioning. The Hessian of $\Re(\theta_{k_0\ell_0}\psi_{k_0\ell_0})$ varies from point to point as a scalar multiple of a fixed vector that is very close to $(k_0(k_0 + \ell_0), -k_0\ell_0, \ell_0(k_0 + \ell_0)),$ in $\R^3$. Using an inequality involving the anisotropic surface area of a convex set that can be derived from the Brunn-Minkowski inequality, we obtain from the above product of measures of projections, a more convenient upper bound of the form $$n^{\frac{4n^2}{n_1^2}}\prod\limits_{1 \leq i, j \leq \frac{n }{n_1}}   |P_{n_1}(t_{ij})|,$$ where $t_{ij} - s$ are, roughly speaking, average Hessians of $\Re(\theta_{k_0\ell_0}\psi_{k_0\ell_0})$ on the respective squares $\square_{ij}$. Theorem~\ref{thm:2} now follows from an inequality relating $\f(t_{ij})$, $\f(s)$, the superdifferential $\partial \f(s)$, and a lower bound on the defect $$\f(t_{ij}) - \f(s) - (\nabla \f(s)) \cdot (t_{ij} - s).$$
This lower bound follows from the quadratic inequality involving $w_0, w_1$ and $w_2$ in (\ref{eq:w}) via a discriminant computation. More specifically, we show that (\ref{eq:w}) implies that  $(w_0, w_1, w_2) \cdot (k_0(k_0 + \ell_0), -k_0\ell_0, \ell_0(k_0 + \ell_0))$ is bounded away from zero, with some quantitative control on this. This essentially corresponds to proving the strict concavity of $\log \f$ at any $s$ in the directions that matter, though this is not estabilished in general. 
In fact, at points $s$ such that that the surface tension $\sigma(s) = - \log \f$ is strictly convex, Theorem~\ref{thm:1} shows that concentration occurs for a random point in $P_n(s)$ with respect to the $\ell_\infty$ norm, by a simpler argument.

\begin{comment}
Let be the negative of the average Hessian where each component is non-negative. Let  and let be a superdifferential of, and suppose that That is, is strictly concave at 
Theorem 1: For any fixed positive if is sampled from the uniform distribution on
Let be the negative of the average Hessian where each component is non-negative. Let and let  ￼ be a superdifferential of   ￼,   ( ￼ is concave by Brunn-Minkowski,) and suppose that    
￼ ,                     which defines an open ice-cream cone in ￼space.   
Theorem 2: For any fixed positive ￼, if ￼ sampled from the uniform distribution on ￼,                                                                 

Theorem 2 implies the following: Let ￼ be the negative of the average Hessian where each component is non-negative. Further suppose that ￼. 
 For any fixed positive ￼,  ￼ sampled from the uniform distribution on ￼ satisfies:      

The Prekopa-Leindler inequality can be used to prove the following lemmas.
Lemma: If g is chosen uniformly at random from ￼  there is a universal constant ￼ such that for ￼￼ 
Lemma: If g is chosen uniformly at random from ￼, there is a universal constant ￼ such that  ￼
￼ ￼  Cover ￼ using ￼ boxes of sidelength ￼  This is possible by Bronshtein’s Theorem on the ￼ covering numbers of spaces of Lipschitz convex functions defined on a bounded domain.

￼
For a  ￼, such that  ￼ where K is a set of functions with controlled Sobolev and ￼ semi-norms, get a uniform upper bound on ￼. 
Separately bound the volume of ￼
\end{comment}

\begin{figure}\label{fig:redcube}
\begin{center}
\includegraphics[scale=0.4]{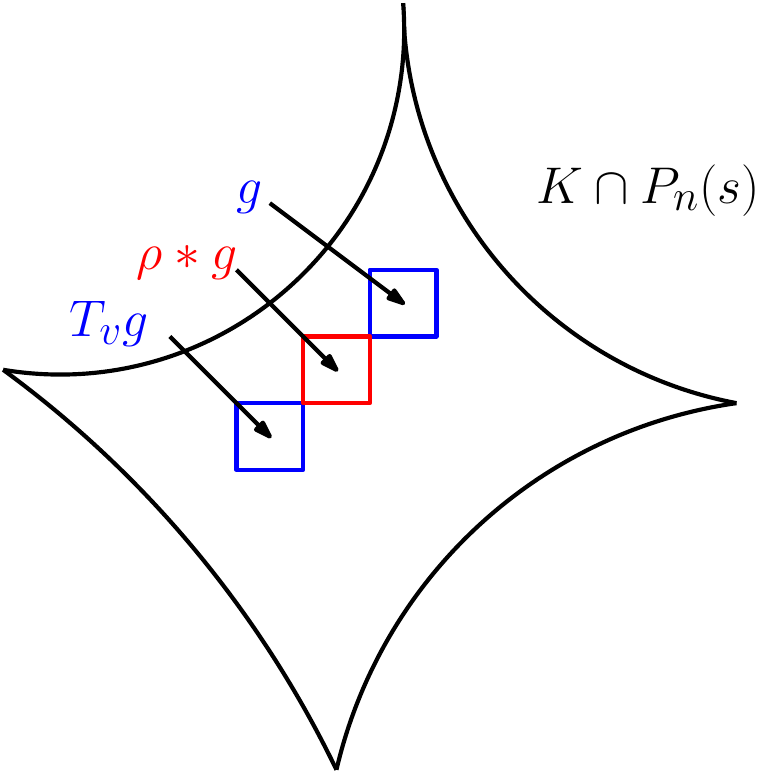}
\caption{
The volume of the intersection of the cube centered at $\rho \ast g$ with $K \cap P_n(s)$ is at least as much as the volume of the intersection of the  cubes centered at $g$ and $T_v g$ with $K \cap P_n(s)$. The function $\rho\ast g$ is a highly structured. It is the real part of a complex exponential.}
\end{center}
\end{figure}

\section{Preliminaries}\lab{sec:prelim}
%\section{Periodic boundary conditions}

We consider the equilateral triangular lattice $\mathbb L$, \ie the subset of $\mathbb{C}$ generated by $1$ and $\omega = e^{\frac{2\pi \imath}{3}}$ by integer linear combinations. 
We define the edges  $E(\mathbb L )$ to be  the lattice rhombi of side $1$ in $\mathbb L$. 
We consider a rhombus  $R_n$ with vertices $0$, $n$, $n (1 - \omega^2)$ and $-n\omega^2$.  Let $\T_n$ be the torus obtained from $R_n$  by identifying opposite sides together. We define the (hyper)edges  $E(\T_n)$ to be  the lattice rhombi of side $1$ in $\T_n$, where each vertex in $V(\T_n)$ is  an equivalence class of $\mathbb{L}$ modulo  $n\mathbb{L}:= n\Z   + n \omega\Z$. 

\begin{defn}[Discrete Hessian]
Let $f:\mathbb{L} \ra \R$ be a function defined on $\mathbb L$.
We define the (discrete) Hessian $\nabla^2(f)$ to be a  function from the set $E(\T_n)$ of rhombi of the form $\{a, b, c, d\}$ of side $1$ (where the order is anticlockwise, and the angle at $a$ is $\pi/3$) on the discrete torus to the reals, satisfying 
$$\nabla^2 f(\{a,b,c,d\}) =- f(a) + f(b) - f(c) + f(d).$$  
\end{defn}

Let $f$ be a function defined on $\mathbb{L}$ such that $\nabla^2(f)$ is periodic modulo $n\mathbb{L}$ and the piecewise linear extension of $f$ to $\mathbb C$ is concave. Such a function $f$ will be termed concave on $\L$, or simply concave. Then $\nabla^2(f)$ may be viewed as a function $g$ from $E(\T_n)$ to $\R$.

\begin{figure}
\begin{center}
\includegraphics[scale=0.4]{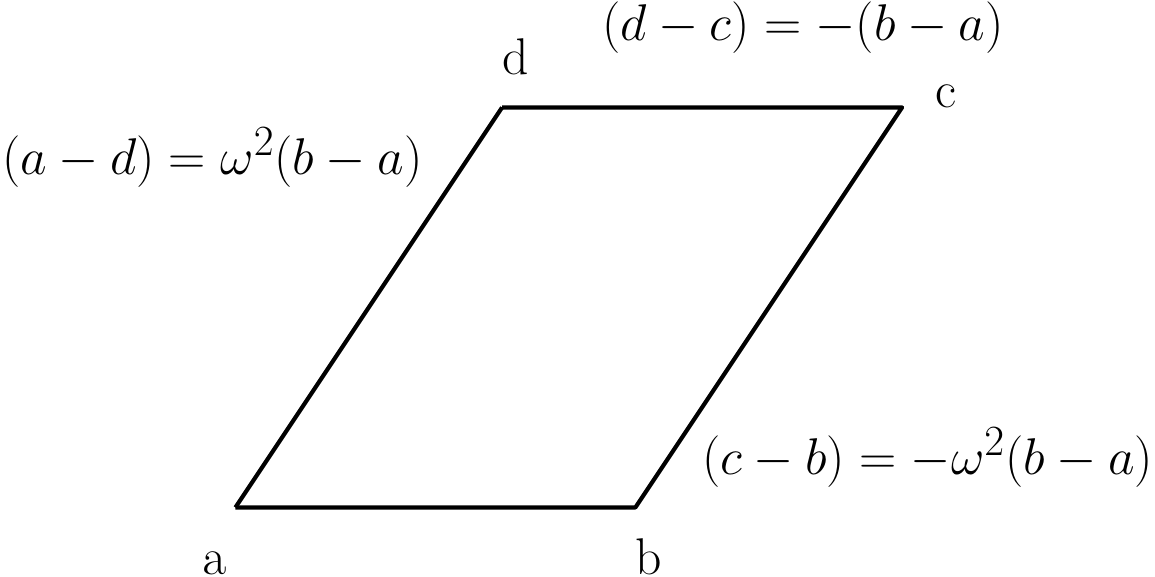}
\caption{A unit rhombus whose vertices occupy positions $a, b, c$ and $d$ in the complex plane. Here $z = b-a$.}
\end{center}
\label{fig:aminusd}
\end{figure}

Let $a, b, c$ and $d$ be the vertices of a lattice rhombus of $ \mathbb{L}$,  of side $1$ as in Figure~\ref{fig:aminusd} such that 
%\\//CHECK!!!!!//\\
 \beq\lab{eq:1.3}  a - d = z\omega^2, \eeq \beq b-a = z,\eeq \beq c-b = -z \omega^2 ,\eeq  \beq\lab{eq:1.6}  d-c = -z,\eeq for some $z \in \{1, \omega, \omega^2\}.$ In the respective cases when $z= 1, \omega$ or $\omega^2$, we define corresponding sets of lattice rhombi of side $1$ to be $E_0(\mathbb L)$, $E_1(\mathbb L)$ or $E_2(\mathbb L)$. Note that $a$ and $c$ are vertices at which the angle is $\frac{\pi}{3}$. 
For $i = 0, 1$ and $2$, we define $E_i(\T_n)$ analogously.
For $s_0, s_1, s_2 > 0$ and $f:V(\T_n) \ra \R$, we say that $g = \nabla^2(f)$ satisfies $g \preccurlyeq s = (s_0, s_1, s_2)$, if  for all $a, b, c$ and $d$ satisfying (\ref{eq:1.3}) to (\ref{eq:1.6}) and $e = \{a, b, c, d\}$, $g$ satisfies

\ben 
\item  $g(e) \leq  s_0,$ if $e \in E_0(\T_n)$, i.e. $z = 1$.
\item $g(e)  \leq  s_1,$  if $e \in E_1(\T_n)$, i.e.  $z = \omega$.
\item $g(e) \leq  s_2,$ if  $e \in E_2(\T_n)$ i.e. $z = \omega^2$.
\een

In the respective cases when $z= 1, \omega$ or $\omega^2$, we define corresponding sets of lattice rhombi of side $1$ to be $E_0(\mathbb L)$, $E_1(\mathbb L)$ or $E_2(\mathbb L)$. This structure is carried over to $\T_n$ by the map $\phi_{0, n}$ defined in the beginning of Subsection~\ref{sec:cover}.
In the beginning of Subsection~\ref{sec:cover}, we have mapped $V(\T_n)$ on to $(\Z/n\Z) \times (\Z/n\Z)$ by mapping $1$ to $(1, 0)$ and $\omega$ to $(0, 1)$ and extending this map to $V(\T_n)$ via a $\Z$ module homomorphism. In particular, this maps $1 + \omega$ to $(1, 1)$.

We will further assume that $2 = s_0 \leq s_1 \leq s_2.$  Given  $s = (s_0, s_1, s_2)\in \R_+^3,$  let $P_n(s)$ be the bounded polytope of  all functions $g:V(\T_n) \ra \R$ such that $\sum_{v \in V(\T_n)} g(v) = 0$ and $\nabla^2(g)\preccurlyeq s$.

\begin{defn}Let $\tP_n(s)$ be defined to be the following image of $P_n(s)$ under an affine transformation.  Given  $s = (s_0, s_1, s_2)\in \R_+^3,$  let $\tP_n(s)$ be the bounded polytope of  all functions $g:V(\T_n) \ra \R$ such that $g(0) = 0$ and $\nabla^2(g)\preccurlyeq s$. 
\end{defn}
We observe that the $n^2-1$ dimensional Lebesgue measures of $\tP_n(s)$ and  $P_n(s)$ satisfy
$$|\tP_n(s)|^{1/n^2}\left(1 - \frac{C\log n}{n}\right) \leq |P_n(s)|^{1/n^2} \leq |\tP_n(s)|^{1/n^2}\left(1 + \frac{C\log n}{n}\right).$$

\begin{lem}\lab{lem:2.3}
For any  $s = (s_0, s_1, s_2)$, where $2 = s_0 \leq s_1 \leq s_2$, there is a unique quadratic function $q(s)$ from $\mathbb L$ to $\R$ such that $\nabla^2q$ satisfies the following.

\ben 
\item  $\nabla^2q(e) = - s_0,$ if $e \in E_0(\mathbb L)$.
\item $\nabla^2q(e)  =  - s_1,$  if $e \in E_1(\mathbb L )$.
\item $\nabla^2q(e)  =  - s_2,$ if  $e \in E_2(\mathbb L)$.
\item $q(0) = q(n) = q(n\omega) = 0$.
\een
\end{lem}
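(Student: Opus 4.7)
The plan is to decompose the problem according to the three layers of a real quadratic polynomial on $\mathbb{C} \supset \mathbb{L}$: the pure quadratic part $Q$ (3 real parameters), the linear part $L$ (2 real parameters), and the constant $C$ (1 real parameter). The first observation I would make is that for any rhombus $\{a,b,c,d\}$ of the kind described, the identity $a+c = b+d$ holds, so in the alternating sum $-f(a)+f(b)-f(c)+f(d)$ every linear or constant contribution cancels. Consequently $\nabla^{2} q$ depends only on $Q$, and by translation invariance it is constant on each of $E_{0}(\mathbb{L})$, $E_{1}(\mathbb{L})$, $E_{2}(\mathbb{L})$. This reduces the Hessian condition to three linear equations on $Q$.

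Next I would analyze the linear map $\Phi: Q \mapsto (\nabla^{2} Q|_{E_{0}}, \nabla^{2} Q|_{E_{1}}, \nabla^{2} Q|_{E_{2}})$ from the $3$-dimensional space of pure quadratic forms to $\R^{3}$. Writing $Q(w) = \alpha|w|^{2} + \operatorname{Re}(\beta w^{2})$ with $\alpha \in \R$ and $\beta \in \mathbb{C}$, rotation by $\omega$ fixes the $\alpha$-component and acts on $\beta$ by multiplication by $\omega$, while on $\R^{3}$ it cyclically permutes the three coordinates. Thus $\Phi$ is $\Z/3$-equivariant, and by Schur's lemma it is an isomorphism provided its restriction to each isotypic component is nonzero. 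The invariant component is checked by taking $Q(w) = |w|^{2}/2$ and computing the Hessian on a single rhombus, which yields a nonzero constant. The non-invariant component is checked by taking $Q(w) = \operatorname{Re}(w^{2})$ and verifying that the value on $E_{0}$ differs from this constant. Either of these is a short direct computation. This gives a unique $Q$ with the prescribed Hessian triple $(-s_{0},-s_{1},-s_{2})$.

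With $Q$ fixed, I would then solve for $L$ and $C$ using the three pinning conditions $q(0)=q(n)=q(n\omega)=0$. The condition $q(0)=0$ immediately gives $C = 0$, and then $L(n) = -Q(n)$ and $L(n\omega) = -Q(n\omega)$ determine $L$ uniquely since $n$ and $n\omega$ are $\R$-linearly independent in $\mathbb{C}$. For uniqueness of the whole statement, if $q_{1}, q_{2}$ both satisfy the conditions, then $q_{1}-q_{2}$ is a quadratic with vanishing discrete Hessian on all three rhombus classes (forcing $Q_{1}=Q_{2}$ by injectivity of $\Phi$) and vanishing at $0,n,n\omega$ (forcing $L_{1}=L_{2}$ and $C_{1}=C_{2}$).

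The only nontrivial step is the isomorphism claim for $\Phi$; everything else is formal. The representation-theoretic argument above cuts the work down to evaluating two explicit scalars, so the main obstacle reduces to a two-line computation. I would close by noting that the hypothesis $2=s_{0}\leq s_{1}\leq s_{2}$ is not used in the construction — the uniqueness and existence hold for arbitrary triples — it is merely the normalization under which the rest of the paper operates.
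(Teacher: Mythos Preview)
Your argument is correct. Both your approach and the paper's reduce the problem to showing that the linear map from pure quadratic forms to Hessian triples is an isomorphism, and both exploit the $\Z/3$ rotational symmetry; the difference is in how that isomorphism is established. The paper's one-line proof works constructively: it builds $q$ explicitly for $s=(1,0,0)$, observes that the cases $(0,1,0)$ and $(0,0,1)$ are rotations of the same function, and then takes linear combinations. You instead argue abstractly via Schur's lemma on the isotypic decomposition, reducing the verification to two scalar evaluations. Your route buys a cleaner separation of the roles of $Q$, $L$, and $C$ and an explicit uniqueness argument (which the paper leaves implicit), at the cost of a slightly heavier setup; the paper's route is shorter but leaves more to the reader. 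One small point of phrasing: your check on the non-invariant component should be stated as verifying that $\Phi(\operatorname{Re}(w^2))$ is nonzero in $\R^3$ (it automatically lands in the sum-zero subspace by equivariance), rather than that ``the value on $E_0$ differs from this constant,'' which is ambiguous. Your closing remark that the ordering hypothesis on $s$ is unused is also correct.
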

\begin{proof}
This can be seen by explicitly constructing $q(s)$ when $s = (1, 0, 0)$, $(0, 1, 0)$ and $(0,0,1)$ (which are rotations of the same concave function) and combining these by linear combination. 
\end{proof}

Given a concave function $f:\mathbb L \ra \R$ such that 
$\nabla^2 f$ is invariant under translation by elements of $n \mathbb L$, and the average value of $\nabla^2 f$ on edges  in $E_i(\L)$ (which is well defined due to periodicity) is equal to $-s_i$ ,  and $f(0) = f(n) = f(n\omega) = 0$, we consider $(f - q)(s)$. Since the average value of $\nabla^2 f - \nabla^2 q$ is $0$, this implies that $f - q$ is $0$ on $n\L$, and more generally, is invariant under translations in $n\L$. We can therefore view $f - q$ to be a function from $\T_n = \L/{n\L}$ to $\R$, 
and in fact the resulting function is in $\tP_n(s)$. Conversely, any point in $\tP_n(s)$ can be extended to a periodic function on $\L$, to which we can add $q(s)$ and thereby recover a function $f$ on $\L$ that is concave, such that
$\nabla^2 f$ is invariant under translation by elements of $n \mathbb L$, the average value of $\nabla^2 f$ on $E_i(\L)$ is $-s_i$ ,  and $f(0) = f(n) = f(n\omega) = 0$.

\underline{Note on constants:} We will denote  constants depending only on $s$ by $C$ and $c$

\subsection{Convex geometry}
Let $1 \leq \ell \in \Z$. Given sets $K_i\subseteq \R^m$ for $i \in [\ell]$, let their Minkowski sum $\{x_1 + \dots + x_\ell \big| \forall i \in [\ell], x_i \in K_i\},$ be denoted by $K_1 + \dots + K_\ell.$

Let $K$ and $L$ be compact convex subsets of $\R^m$. 

Then, the Brunn-Minkowski inequality \cite{Brunn, Minkowski} states that \beq |K + L|^\frac{1}{m} \geq |K|^\frac{1}{m} + |L|^\frac{1}{m}.\eeq 
It can be shown that 
$$\lim_{\eps \ra 0^+} \frac{|L + \eps K| - |L|}{\eps}$$ exists. We will call this the anisotropic surface area $S_K(L)$ of $L$ with respect to $K$.

 Dinghas \cite{Dinghas, Figalli} showed that the following anisotropic isoperimetric inequality can be derived from the Brunn-Minkowski inequality.
\beq\lab{eq:2.2} S_K(L) \geq m|K|^{\frac{1}{m}} |L|^{\frac{m-1}{m}}.\eeq 

We shall need the following result of Pr\'{e}kopa (\cite{prekopa}, Theorem 6).
\begin{thm}\lab{thm:prekopa}
Let $f(x, y)$ be a function of $\R^n \oplus \R^m$ where $x \in \R^n$ and 
and $y \in \R^m$. Suppose that $f$ is logconcave in $\R^{n+m}$ and let
$A$ be a convex subset of $\R^m$. Then the function of the variable x:
$$\int_A f(x, y) dy$$
is logconcave in the entire space $\R^n$.
\end{thm}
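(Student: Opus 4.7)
The plan is to deduce Theorem~\ref{thm:prekopa} from the Pr\'{e}kopa--Leindler inequality, a functional strengthening of the Brunn--Minkowski inequality already cited in the paper. Writing $F(x) := \int_A f(x,y)\,dy$, my goal is to verify that for arbitrary $x_1,x_2 \in \R^n$ and $\lambda \in (0,1)$,
$$F(\lambda x_1 + (1-\lambda)x_2) \;\geq\; F(x_1)^{\lambda}\, F(x_2)^{1-\lambda}.$$

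First I would absorb the convex set $A$ into the integrand by setting $h_i(y) := f(x_i,y)\,\mathbf{1}_A(y)$ for $i=1,2$ and $h_\lambda(y) := f(\lambda x_1+(1-\lambda)x_2,y)\,\mathbf{1}_A(y)$. Since $A$ is convex, its indicator is log-concave on $\R^m$; combined with log-concavity of $f$ on $\R^{n+m}$, this yields the pointwise log-midpoint inequality
$$h_\lambda\bigl(\lambda y_1+(1-\lambda)y_2\bigr) \;\geq\; h_1(y_1)^{\lambda}\,h_2(y_2)^{1-\lambda}$$
for all $y_1,y_2 \in \R^m$. If both $y_i$ lie in $A$ then convexity of $A$ keeps the convex combination in $A$ and log-concavity of $f$ applied to the two points $(x_i,y_i)\in\R^{n+m}$ supplies the bound; otherwise the right-hand side vanishes and there is nothing to prove.

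Next I would invoke the Pr\'{e}kopa--Leindler inequality: if non-negative measurable functions $h_1,h_2,h_\lambda$ on $\R^m$ satisfy the pointwise inequality above, then
$$\int_{\R^m} h_\lambda\,dy \;\geq\; \left(\int_{\R^m} h_1\,dy\right)^{\lambda} \left(\int_{\R^m} h_2\,dy\right)^{1-\lambda}.$$
Applied to the $h_i$ just constructed, the left-hand side is $F(\lambda x_1+(1-\lambda)x_2)$ and the right-hand side is $F(x_1)^{\lambda}F(x_2)^{1-\lambda}$, closing the argument.

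The main obstacle is proving the Pr\'{e}kopa--Leindler inequality itself, which I would handle by induction on the ambient dimension $m$. The base case $m=1$ reduces, via a layer-cake decomposition on super-level sets, to the one-dimensional Brunn--Minkowski inequality $|A+B| \geq |A|+|B|$ for measurable subsets of $\R$: with $A_t := \{h_1>t\}$ and $B_t := \{h_2>t\}$, the pointwise hypothesis forces $\{h_\lambda > t\} \supseteq \lambda A_t + (1-\lambda)B_t$ after an appropriate rescaling of the threshold, and weighted AM--GM applied to $|A_t|$ and $|B_t|$ finishes each slice. The inductive step writes $y=(y',y_m)$, fixes $y_m$, applies the $(m-1)$-dimensional inequality to the marginals in $y'$, and then applies the one-dimensional result in $y_m$. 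Once this induction is in place, the rest of the reduction above is formal.
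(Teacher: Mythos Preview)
The paper does not prove Theorem~\ref{thm:prekopa} at all: it is quoted verbatim as ``Theorem~6'' from Pr\'{e}kopa's original paper \cite{prekopa} and used as a black box. So there is no ``paper's own proof'' to compare against; you have supplied a proof where the paper simply cites one.

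That said, your argument is the standard one and is correct in outline. Absorbing the convex domain $A$ into the integrand via its (log-concave) indicator, verifying the pointwise Pr\'{e}kopa--Leindler hypothesis on the functions $h_1,h_2,h_\lambda$, and then invoking Pr\'{e}kopa--Leindler is exactly how this result is usually derived. One small point: your sketch of the one-dimensional base case is a little loose. The phrase ``after an appropriate rescaling of the threshold'' hides the key normalization step: one first rescales so that $\sup h_1 = \sup h_2 = 1$, which is what makes the level-set inclusion $\{h_\lambda > t\}\supseteq \lambda\{h_1>t\}+(1-\lambda)\{h_2>t\}$ hold at the \emph{same} threshold $t$ on both sides. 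Brunn--Minkowski in $\R^1$ then gives the additive bound $\int h_\lambda \geq \lambda\int h_1 + (1-\lambda)\int h_2$, and only after undoing the normalization does weighted AM--GM convert this to the multiplicative form. If you intend to include this proof, it would be worth making that normalization explicit.
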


We note the following theorem of Fradelizi \cite{Fradelizi}. 
\begin{thm}\lab{thm:frad}
 The density at the center of mass of a logconcave density on $\R^{n}$ is no less than $e^{- n}$ multiplied by the supremum of the density.
\end{thm}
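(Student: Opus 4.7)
The plan is to combine Pr\'ekopa's theorem (Theorem~\ref{thm:prekopa}) with an induction on dimension, settling the one-dimensional base case by a direct contradiction argument. After translating the centroid to the origin and scaling so that $\int f = 1$, the claim becomes $f(0) \ge e^{-n}M$ with $M := \sup f$ attained at some $p$; if $p = 0$ the bound is trivial, so assume $p \ne 0$.

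For the one-dimensional base case I would argue as follows. Write $f = e^{-\phi}$ with $\phi$ convex, WLOG $p > 0$, and suppose for contradiction that $f(0) < M/e$. Setting $r := M/f(0) > e$, the secant slope from $0$ to $p$ forces $\phi'(0^+) \le (\phi(p) - \phi(0))/p < -1/p$, whence $f(x) \le f(0)\, e^{x/p}$ for $x \le 0$. Log-concavity on $[0,p]$ gives the matching lower bound $f(x) \ge f(0)\, r^{x/p}$ there. Substituting both estimates into the centroid identity $\int_0^\infty xf\,dx = -\int_{-\infty}^0 xf\,dx$ reduces the question to the scalar inequality $r\ln r - r + 1 \le (\ln r)^2$. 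This fails for every $r > e$: both sides equal $1$ at $r = e$, and the derivative of $r\ln r - r + 1 - (\ln r)^2$ is $(\ln r)(1 - 2/r) > 0$ on $(e, \infty)$, so the left-hand side is strictly larger for $r > e$. This yields the sharp one-dimensional Fradelizi bound $f(0) \ge M/e$.

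For the inductive step, set $u = p/\|p\|$ and slice $\R^n$ by the hyperplanes $H_t = \{\langle x, u\rangle = t\}$. By Pr\'ekopa the marginal $\phi(t) := \int_{H_t} f\,dy$ is log-concave on $\R$ with centroid $\langle \bar x, u\rangle = 0$, so the $n = 1$ case already gives $\phi(0) \ge e^{-1}\sup_t \phi(t)$. Analogously, $\psi(t) := \sup_{H_t} f$ is log-concave in $t$ (supremum of a log-concave function in the transverse variables) with $\psi(\|p\|) = M$, and the inductive hypothesis applied to the $(n{-}1)$-dimensional slice $f|_{H_0}$ controls $f$ at the in-slice centroid in terms of $\psi(0)$. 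The aim is to combine these with the Brunn--Minkowski-based concavity of $t \mapsto |\{f \ge t\}|^{1/n}$ in $\log t$ so as to accumulate exactly one factor of $e^{-1}$ per dimension, producing the desired $e^{-n}$.

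The main obstacle is that the in-slice centroid of $f|_{H_0}$ generally does not coincide with the origin: projecting the centroid of $f$ onto $H_0$ is not the centroid of the marginal, nor of the restriction. A naive iteration therefore leaks extra constants at each step. To get the sharp $e^{-n}$ I expect the argument to require either choosing each successive slicing direction adaptively, so that the centroid condition is preserved along a flag of subspaces, or abandoning slicing altogether in favour of a layer-cake argument applied directly to the convex level sets $\{f \ge t\}$ via Brunn--Minkowski. Ensuring that the accumulated constant is exactly $e^{-n}$, and not merely $e^{-Cn}$ for some larger $C$, is the most delicate part of the argument.
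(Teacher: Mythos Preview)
The paper does not prove this theorem: it is quoted without proof and attributed to Fradelizi \cite{Fradelizi}, so there is no in-paper argument to compare against.

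On your proposal itself: the one-dimensional argument is correct and rather clean. Bounding $-\int_{-\infty}^0 xf\,dx \le f(0)p^2$ from the tangent inequality and $\int_0^p xf\,dx \ge f(0)p^2\,(r\ln r - r + 1)/(\ln r)^2$ from the chord inequality, the centroid identity indeed forces $r\ln r - r + 1 \le (\ln r)^2$, and you correctly show this fails for every $r>e$, giving the sharp constant $e^{-1}$ in dimension one. (The tail $x>p$ only helps, since it can only increase $\int_0^\infty xf$.)

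The inductive step, however, is left open, and the obstacle you name is genuine: the centroid of $f|_{H_0}$ need not be the origin, so one cannot simply invoke the $(n{-}1)$-dimensional statement on that slice, and your adaptive-flag idea is not fleshed out. Fradelizi's original proof does not proceed by naive induction on dimension. One standard route is a variational reduction to functions that are log-affine on a convex support (for which the computation becomes explicit); another, closer to your layer-cake suggestion, works directly with the concavity of $t\mapsto |\{f\ge e^{-t}\}|^{1/n}$ and expresses the centroid constraint through these level sets. As written, your proposal settles $n=1$ and correctly locates, but does not remove, the obstruction in higher dimensions.
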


We will also need the following theorem of Vaaler \cite{Vaaler}.
\begin{thm}\lab{thm:vaal}
There is a  lower bound of $1$ on the volume of a central section of the unit  cube.
\end{thm}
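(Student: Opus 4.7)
The plan is to recast the volume of a central section of the unit cube as a Fourier-side integral of products of $\text{sinc}$ functions, and then exploit the resolution of identity coming from orthogonal projection together with a sharp one-dimensional inequality.

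Let $Q = [-1/2, 1/2]^n$, which has $n$-dimensional volume $1$, and let $H \subseteq \R^n$ be a $k$-dimensional subspace. First I would apply the Fourier slice identity to $\mathbf{1}_Q$ to obtain
$$\vol_k(Q \cap H) \;=\; \int_{H^\perp} \widehat{\mathbf{1}_Q}(\xi)\, d\xi \;=\; \int_{H^\perp} \prod_{i=1}^n \text{sinc}(\pi \langle \xi, e_i\rangle)\, d\xi,$$
since the Fourier transform factorizes over coordinate directions when $Q$ is a product. Choosing an isometric parametrization $\R^{n-k} \to H^\perp$ and letting $w_i$ denote the projection of $e_i$ onto $H^\perp$ expressed in these coordinates, the vectors $w_1,\dots,w_n \in \R^{n-k}$ satisfy the resolution of identity $\sum_{i=1}^n w_i w_i^\tr = I_{n-k}$, and the quantity to bound becomes
$$V \;:=\; \int_{\R^{n-k}} \prod_{i=1}^n \text{sinc}\bigl(\pi \langle y, w_i\rangle\bigr)\, dy.$$

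The second step is to show $V \geq 1$. Since $\text{sinc}$ is oscillatory and not log-concave, Pr\'ekopa--Leindler does not apply directly. I would invoke Vaaler's one-dimensional comparison lemma, producing an even log-concave function $\varphi\colon \R \to [0,\infty)$ with $\int_\R \varphi = 1$ that dominates $\text{sinc}(\pi t)$ in the averaged sense required for the multilinear integral, exploiting the bandlimited Paley--Wiener structure of $\text{sinc}$ and its vanishing at non-zero integers. Combined with Ball's rank-one Brascamp--Lieb / Barthe inequality, whose hypothesis is precisely the decomposition of identity $\sum_i w_i w_i^\tr = I_{n-k}$ established in the first step, this gives
$$V \;\geq\; \prod_{i=1}^n \Bigl(\int_\R \varphi\Bigr) \;=\; 1,$$
the trace constraint $\sum_i \|w_i\|^2 = n-k$ calibrating the exponents so that the constant comes out to exactly $1$.

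The main obstacle is the one-dimensional step, because any slack in the $L^1$ mass of the majorant $\varphi$ propagates through the multilinear combination and destroys the sharp constant. Establishing sharpness requires the delicate Paley--Wiener / bandlimited character of $\text{sinc}$ together with its zero pattern on $\Z \setminus \{0\}$, which is the heart of Vaaler's original argument and typically handled via interpolation at half-integers. Once this one-dimensional lemma is in place, assembling the Fourier slice formula, the decomposition of identity, and the Barthe inequality is routine.
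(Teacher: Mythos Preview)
The paper does not prove this statement; it records Vaaler's theorem as a cited result from \cite{Vaaler} and uses it as a black box. There is no proof in the paper to compare against.

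Your Fourier slice setup is correct and yields $\vol_k(Q\cap H)=\int_{\R^{n-k}}\prod_i \operatorname{sinc}(\pi\langle y,w_i\rangle)\,dy$ with $\sum_i w_i w_i^{\tr}=I_{n-k}$. The gap is in the second step. Since $\operatorname{sinc}$ changes sign, a majorant $\varphi\ge\operatorname{sinc}$ does not give $\prod_i\varphi(\langle y,w_i\rangle)\ge\prod_i\operatorname{sinc}(\langle y,w_i\rangle)$ pointwise, so no ``one-dimensional comparison lemma'' of the type you describe can be fed into a multilinear inequality on the Fourier side. More fundamentally, the rank-one Brascamp--Lieb inequality bounds $\int\prod f_i(\langle y,w_i\rangle)\,dy$ from \emph{above} (this is Ball's tool for the upper bound on cube sections), while Barthe's reverse inequality lower-bounds a different functional built from a supremum over decompositions $x=\sum c_i\theta_i u_i$; neither applies to the signed integrand $\prod_i\operatorname{sinc}$ in the direction you need. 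The phrase ``dominates in the averaged sense required for the multilinear integral'' is precisely where the whole difficulty hides, and it is not a lemma one can cite off the shelf.

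The arguments that actually succeed operate on the density side: the section volume equals the density at $0$ of $\sum_i X_i w_i$ with $X_i$ i.i.d.\ uniform on $[-\tfrac12,\tfrac12]$, and Vaaler bounds this from below via a peakedness (convex-order) comparison of the uniform law with a carefully chosen symmetric unimodal reference measure, after which the multilinear step goes through because such comparisons are preserved under independent sums (Kanter's inequality). Your decomposition of identity is the right structural hypothesis, but it should be combined with this density-side peakedness argument rather than with Barthe on the Fourier side.
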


%\begin{thm}

%\end{thm}

\section{Characteristics of relevant polytopes}\lab{sec:2}

\subsection{Volume of the polytope $P_n(s)$}

We denote the $k-$dimensional Lebesgue measure of a $k-$dimensional polytope $P$ by $|P|$. We will need to show that $|P_m(s)|^{1/m^2}$ is less than $(1 + o_m(1))|P_n(s)|^{\frac{1}{n^2}},$ for $ n \geq m$. We achieve this by conditioning on a ``double layer boundary" and the use of the Brunn-Minkowski inequality.
We will identify $\Z + \Z\omega$ with $\Z^2$ by mapping $x + \omega y$, for $x, y \in \Z$ onto $(x, y) \in \Z^2.$

Given $n_1|n_2$, the natural map from $\Z^2$ to $\Z^2/(n_1 \Z^2) = \T_{n_1}$ factors through $\Z^2/(n_2 \Z^2) =\T_{n_2}$. We denote the respective resulting maps from $\T_{n_2}$ to $\T_{n_1}$ by $\phi_{n_2, n_1}$, from $\Z^2$ to $\T_{n_2}$ by $\phi_{0, n_2}$ and from $\Z^2$ to $\T_{n_1}$ by $\phi_{0, n_1}$.
Given a set of boundary nodes $\b \subseteq V(\T_n)$, and $ x \in \R^{\b}$, we define $Q_{ \b}(x)$ to be the fiber polytope over $x$, that arises from the projection map $\Pi_{\b}$ of $\tP_n(s)$ onto $\R^{\b}.$ Note that $Q_{\b}(x)$ implicitly depends on $s$.

%Let $\{0\} \subseteq \b_1 \neq \{0\},$ be a subset of $V(\T_{n_1}).$ 
%The following was proved in Part I, \cite{PartI}.

\begin{lem}\lab{lem:3-}
Let $\{0\} \subseteq \b_1 \neq \{0\},$ be a subset of $V(\T_{n_1}).$ Then,
$$0 \leq \ln |\Pi_{\b_1} \tP_{n_1}(s)| \leq (|\b_1| -1)\ln (Cn_1^2).$$
\end{lem}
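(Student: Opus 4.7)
The plan has two parts. For the lower bound $|\Pi_{\b_1}\tP_{n_1}(s)|\geq 1$, I would observe that $\tP_{n_1}(s)$ contains the axis-aligned cube $B := \{g \in \R^{V(\T_{n_1})} : g(0)=0,\ \|g\|_\infty \leq 1/2\}$. Indeed, any $g \in B$ satisfies $|\nabla^2 g(e)| \leq 4\cdot(1/2) = 2 = s_0 \leq s_i$ on every rhombus $e$ of type $i$, so $\nabla^2 g \preccurlyeq s$. Since $0 \in \b_1$, the coordinate projection $\Pi_{\b_1}(B)$ equals the cube $[-1/2,1/2]^{\b_1\setminus\{0\}}$, whose $(|\b_1|-1)$-dimensional Lebesgue measure is $1$, giving $|\Pi_{\b_1}\tP_{n_1}(s)| \geq 1$.

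For the upper bound, the plan is first to establish the a priori pointwise estimate $\|g\|_\infty \leq C n_1^2$ for every $g \in \tP_{n_1}(s)$, with $C = C(s)$. Granted this, $\Pi_{\b_1}\tP_{n_1}(s) \subseteq [-Cn_1^2,\ Cn_1^2]^{\b_1\setminus\{0\}}$, so $|\Pi_{\b_1}\tP_{n_1}(s)| \leq (2Cn_1^2)^{|\b_1|-1}$, which yields the claimed bound after absorbing the factor $2C$ into the constant.

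To prove the pointwise estimate, lift $g$ to a periodic function on $\L$ and set $f := g + q(s)$, where $q(s)$ is the quadratic from Lemma~\ref{lem:2.3}. Then $\nabla^2 f = \nabla^2 g - s \preccurlyeq 0$, so $f$ is discrete-concave on all of $\L$, and $f$ vanishes at $0,\, n_1,\, n_1\omega$. Concavity on the triangle with these corners forces $f \geq 0$ in the interior, while the matching upper bound $f(v) \leq C n_1^2$ in the fundamental region is extracted by a supporting-hyperplane argument that exploits the quasi-periodicity $f(v+n_1) - f(v) = q(s)(v+n_1) - q(s)(v)$ across translated copies of the three vanishing points. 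Combined with $|q(s)(v)| = O(n_1^2)$ there, this gives $\|g\|_\infty = O(n_1^2)$.

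The hard part is precisely this uniform bound $\|g\|_\infty \leq Cn_1^2$. Because the constraints $\nabla^2 g \preccurlyeq s$ are only one-sided, individual discrete second differences of $g$ can be arbitrarily negative, so a direct pathwise summation of local inequalities fails. The bound must therefore be extracted from the interplay between the global concavity of $f$ and the quasi-periodicity inherited from the toral structure on $g$; converting this interplay into a quantitative oscillation estimate on the fundamental triangle is the technical crux.
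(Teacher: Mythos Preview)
Your lower bound argument is correct and matches the paper's (which uses $[0,1]^{\b_1\setminus\{0\}}\subseteq\tP_{n_1}(s)$ rather than the centered cube, but the idea is identical).

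For the upper bound you correctly reduce to showing $\|g\|_\infty\leq Cn_1^2$, but you then leave this undone and, more importantly, misdiagnose the difficulty. You assert that ``a direct pathwise summation of local inequalities fails'' because the Hessian bound is one-sided; in fact the paper's proof is precisely such a pathwise argument, and periodicity on $\T_{n_1}$ is what closes it --- elementarily, without any detour through the concave lift $f=g+q(s)$. The mechanism is this: along any fixed lattice direction $e$ on $\T_{n_1}$, the first differences $a_j:=g((j{+}1)e)-g(je)$ satisfy $\sum_{j=0}^{n_1-1}a_j=0$ by periodicity, while $a_{j+1}-a_j\leq C(s)$ since the straight-line second difference of $g$ along any lattice direction is dominated by a fixed combination of the $s_i$ (equivalently, the piecewise-linear extension of $g+q(s)$ is concave, so second differences of $g$ along any lattice line are bounded above). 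Then for any $k$,
\[
a_k-a_0=\sum_{j=0}^{k-1}(a_{j+1}-a_j)\leq Ck,\qquad a_0-a_k=\sum_{j=k}^{n_1-1}(a_{j+1}-a_j)\leq C(n_1-k),
\]
so $|a_k-a_0|\leq Cn_1$; since the $a_j$ have mean zero this gives $|a_k|\leq 2Cn_1$. Hence $g$ is $O(n_1)$-Lipschitz along each lattice direction, and following a two-segment lattice path of length at most $2n_1$ from $0$ to an arbitrary vertex (as the paper constructs, first in direction $1$ and then in direction $1+\omega$) yields $\|g\|_\infty\leq Cn_1^2$.

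Your alternative via the globally concave $f=g+q(s)$ can also be completed, but you stop short of the key step. What is missing is an upper bound on $f$ in the fundamental rhombus: reflect through a point of $n_1\L$ to get $f(v)\leq 2f(0)-f(-v)=-f(-v)$ from concavity, and then bound $f(-v)$ from below using concavity again together with the known values $f=q(s)=O(n_1^2)$ at the three $n_1\L$-corners of the triangle containing $-v$. This works, but it is strictly more effort than the direct Lipschitz estimate the paper actually uses.
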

\begin{proof}
 Given any vertex $v_1$ in $\b_1$ other than $0$, there is a lattice path $\mathrm{path}(v_1)$ (i.e. a path $0= a_1, \dots, a_k = v_0$, where each $a_i - a_{i-1}$ is in the set $\{1, 1 + \omega, \omega, -1, \omega^2, 1 - \omega^2\}$) that goes from $0$ to some vertex $v_0 \in \phi^{-1}_{0,n_1}(v_1)$ that consists of two straight line segments, the first being from $0$ to some point in $\Z^+$, and the second having the direction $1 + \omega$. It is clear that this $v_0$ can be chosen to have absolute value at most $2n_1$ by taking an appropriate representative of $\phi^{-1}_{0,n_1}(v_1).$
We see that $[0,1]^{\b_1\setminus\{0\}} \subseteq \Pi_{\b_1} \tP_{n_1}(s) \subseteq \R^{\b_1\setminus \{0\}}.$ Let $f_1 \in  \tP_{n_1}(s)$. Along $\mathrm{path}(v_1)$, at each step, the slope of $f$ increases by no more than a constant, due to the condition $\nabla^2(f_1) \preccurlyeq s.$ This implies that $f_1$ is $Cn_1$ Lipschitz. Therefore, $\|f_1\|_{\ell_\infty}$ is at most $Cn_1^2.$ Thus $\Pi_{\b_1} \tP_{n_1}(s)$ is contained inside a $|\b_1| -1$ dimensional cube of side length no more than $Cn_1^2.$ 
We have thus proved the lemma.
\end{proof}
\begin{lem}\lab{lem:3}
Let $n_1$ and $n_2$ be positive integers satisfying $n_1 | n_2$. Then 
\beq 1 \leq |\tP_{n_1}(s)|^{\frac{1}{n_1^2}} \leq |\tP_{n_2}(s)|^{\frac{1}{n_2^2}}\left(1 + \frac{C\log n_1}{n_1}\right).\eeq 
\end{lem}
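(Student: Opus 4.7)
The plan is to prove the equivalent lower bound $|\tP_{n_2}(s)|^{1/n_2^2} \geq |\tP_{n_1}(s)|^{1/n_1^2}(1 - C\log n_1 / n_1)$; the stated upper bound follows on rearrangement. The trivial bound $1 \leq |\tP_{n_1}(s)|^{1/n_1^2}$ is immediate from Lemma~\ref{lem:3-}, whose proof embeds a unit cube $[0,1]^{n_1^2 - 1}$ into $\tP_{n_1}(s)$. For the main inequality, the approach is a Fubini decomposition across a ``double-layer'' boundary partitioning $\T_{n_2}$ into $N^2 = (n_2/n_1)^2$ blocks of side $n_1$, coupled with Pr\'ekopa's theorem.

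Let $\square_{ij}$ ($(i,j) \in [N]^2$) be the $N^2$ blocks of side $n_1$, and let $\b \subset V(\T_{n_2})$ be the union of the two outermost rows and columns of each $\square_{ij}$. The double-layer property ensures that no unit rhombus has interior vertices in two distinct blocks, so conditioning on $g|_\b$ decouples the interior constraints. Fubini then gives
\[
|\tP_{n_2}(s)| = \int_{\Pi_\b \tP_{n_2}(s)} \prod_{(i,j)} |R_{ij}(x_{ij})|\,dx,
\]
where the block-fiber $R_{ij}(x_{ij})$ is the $(n_1-2)^2$-dimensional polytope of interior values on $\square_{ij}$ compatible with boundary $x_{ij}$. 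Identifying $\square_{ij}$ with $V(\T_{n_1})$ puts $R_{ij}(x_{ij})$ in the same ambient space as the interior fiber $R_0(y)$ arising from an analogous double-layer Fubini decomposition of $\tP_{n_1}(s)$ along $\b' \subset V(\T_{n_1})$ with $|\b'| = 4n_1 - 4$; since $R_0(y)$ carries additional torus wrap-around Hessian constraints, $R_{ij}(y) \supseteq R_0(y)$ after identification.

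Applying Fubini within $\tP_{n_1}(s)$ and using Lemma~\ref{lem:3-} to bound $|\Pi_{\b'}\tP_{n_1}(s)| \leq (Cn_1^2)^{|\b'|-1}$ gives $\sup_y |R_0(y)| \geq |\tP_{n_1}(s)|(Cn_1^2)^{-(|\b'|-1)}$. Let $y^*$ achieve this supremum and $f^* \in \tP_{n_1}(s)$ witness it; let $F^* \in \tP_{n_2}(s)$ be the $n_1$-periodic extension of $f^*$ (admissible because periodicity preserves all Hessian constraints), and set $x^* := F^*|_\b$. Then each $x^*_{ij}$ corresponds under identification to $y^*$, so
\[
|Q_\b(x^*)| = \prod_{ij}|R_{ij}(x^*_{ij})| \geq |R_0(y^*)|^{N^2}.
\]
By Pr\'ekopa's theorem (Theorem~\ref{thm:prekopa}), $|Q_\b(\cdot)|^{1/\dim}$ is concave, and combined with Lemma~\ref{lem:3-}'s unit-cube lower bound on $|\Pi_\b\tP_{n_2}(s)|$, Brunn--Minkowski yields a convex neighborhood $U$ of $x^*$ with $|U| \geq 1$ on which $|Q_\b(x)| \geq c^{|\b|}|Q_\b(x^*)|$. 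Integrating, taking $n_2^2$-th roots, and using $|\b| = O(n_2^2/n_1)$, $|\b'| = O(n_1)$, and $N^2/n_2^2 = 1/n_1^2$ produces
\[
\tfrac{1}{n_2^2}\log|\tP_{n_2}(s)| \geq \tfrac{1}{n_1^2}\log|\tP_{n_1}(s)| - O(\log n_1 / n_1),
\]
which exponentiates to the claim.

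The main obstacle I anticipate is the neighborhood construction in the last paragraph: extracting from the concavity of $|Q_\b|^{1/\dim}$ a set $U \subset \Pi_\b\tP_{n_2}(s)$ with both $|U| \geq 1$ and $|Q_\b(x)| \geq c^{|\b|}|Q_\b(x^*)|$ on $U$. This requires quantitative control over the ``width'' of $\Pi_\b\tP_{n_2}(s)$ around $x^*$ in every direction, which Brunn--Minkowski combined with the unit-cube inclusion of Lemma~\ref{lem:3-} supplies. The constants must be tracked carefully so that $c^{|\b|} = c^{O(n_2^2/n_1)}$ is absorbed into the final $O(\log n_1 / n_1)$ error when the $n_2^2$-th root is taken.
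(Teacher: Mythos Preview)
Your overall framework matches the paper: double-layer boundary $\b$, decoupling into $N^2$ identical blocks, and the key observation that the periodic lift of any $f\in\tP_{n_1}(s)$ lands in $\tP_{n_2}(s)$ with fiber volume $|Q_\b(x^*)|\ge |R_0(y)|^{N^2}$. The difference is in how the final integration is carried out.

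The paper does \emph{not} pick a single maximizer $y^*$. Instead it integrates over the entire image of the periodic-lift map $\psi_{\b_1,\b_2}:\Pi_{\b_1}\tP_{n_1}(s)\to\Pi_{\b_2}\tP_{n_2}(s)$, using the exact identity $F_2(\psi_{\b_1,\b_2}(x))=F_1(x)^{N^2}$ (your containment $R_{ij}\supseteq R_0$ becomes equality when the neighbouring blocks carry the same periodic boundary data). To give this $(|\b_1|-1)$-dimensional image positive $(|\b_2|-1)$-dimensional measure, the paper thickens it in the orthogonal directions by $n_1^{-4}$ in $\ell_\infty$: a short stability claim (Claim~\ref{cl:2.2}) shows the fiber volume drops by at most $c^{N^2}$ under such perturbation, and Vaaler's theorem bounds the slab volume below by $(n_1^{-4})^{|\b_2|-|\b_1|}$. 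Finally Jensen's inequality (monotonicity of $L_p$ norms in $p$) converts $\int_{\Pi_{\b_1}\tP_{n_1}} F_1(x)^{N^2}\,dx$ into $|\Pi_{\b_1}\tP_{n_1}|^{1-N^2}\,|\tP_{n_1}|^{N^2}$, and the bound on $|\Pi_{\b_1}\tP_{n_1}|$ from Lemma~\ref{lem:3-} closes the argument.

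Your single-maximizer route is also workable, but the neighbourhood claim as written is too strong: you cannot have both $|U|\ge 1$ and $|Q_\b(x)|\ge c^{|\b|}|Q_\b(x^*)|$ on $U$. Concavity of $|Q_\b|^{1/\dim}$ along segments from $x^*$ into the unit cube gives, for $U=(1-t)x^*+t\,[0,1]^{|\b|-1}$, the bound $\inf_U |Q_\b|\ge (1-t)^{n_2^2-|\b|}|Q_\b(x^*)|$, but then $|U|=t^{|\b|-1}$. Optimizing, e.g.\ $t\asymp 1/n_1$, makes $|U|=(C/n_1)^{|\b|}$ and $\inf_U|Q_\b|\ge c^{|\b|}|Q_\b(x^*)|$; both factors are exponentially small in $|\b|=O(n_2^2/n_1)$, but each contributes only $n_1^{-O(1/n_1)}=1-O(\log n_1/n_1)$ after the $n_2^2$-th root, so the argument still closes. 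You should replace ``$|U|\ge 1$'' by this explicit tradeoff.
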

\begin{proof}The lower bound of $1$ on $|\tP_{n_1}(s)|^{\frac{1}{n_1^2}}$ follows from $[0,1]^{V(\T_{n_1})\setminus\{0\}} \subseteq \tP_n(s).$
We define the set $\b_{1} \subseteq V(\T_{n_1})$ of ``boundary vertices" to be all vertices that are either of the form $(0, y)$ or $(1, y)$ or $(x, 0)$ or $(x, 1)$, where $x, y$ range over all of $\Z/(n_1 \Z)$. We define the set $\b_{2}$ to be $\phi_{n_2, n_1}^{-1}(\b_{1}).$ For $x \in \R^{\b_1}$, let $F_1(x):= |Q_{\b_1}(x)|,$ and for  $x \in \R^{\b_2}$, let $F_2(x):= |Q_{\b_2}(x)|.$
We now have \beq|\tP_{n_1}(s)| = \int\limits_{\R^{\b_1}}F_1(x)dx = \int\limits_{\Pi_{\b_1} \tP_{n_1}(s)} F_1(x) dx.\eeq
Let  $\phi^*_{n_2, n_1}$ be the linear map from $\R^{V(\T_{n_1})}$ to $\R^{V(\T_{n_2})}$ induced by $\phi_{n_2, n_1}$. 
Let  $\psi_{\b_1, \b_2}$ be the linear map from $\R^{\b_1}$ to $\R^{\b_2}$ induced by $\phi_{n_2, n_1}$. 
%Let $\mu_{21}$ be the push-forward of the Lebesgue measure restricted to $\Pi_{\b_1} P_{n_1}(s)%$ on to $\R^{\b_2}$ associated with  the map $\psi_{\b_1, \b_2}$. 
Then, for $x \in \R^{\b_1},$ \beq F_2(\psi_{\b_1, \b_2}(x)) = F_1(x)^{\left(\frac{n_2}{n_1}\right)^2}.\lab{eq:2.6}\eeq
Note that that $\tP_{n}(s)$ is $n^2-1$ dimensional, has an $\ell_\infty$ diameter of $O(n^2)$ and contains a $n^2-1$ dimensional unit $\ell_\infty-$ball as a consequence of $s_0$ being set to  $2$. So the $|\b_1|-1$ dimensional polytopes  $\Pi_{\b_1} \tP_{n_1}(s)$, and $\psi_{\b_1, \b_2}(\Pi_{\b_1} \tP_{n_1}(s))$ contain $|\b_1|-1$ dimensional $\ell_\infty$ balls of radius $1.$ 
\begin{claim} \lab{cl:2.2}
Let $S_{\b_1, \b_2}(\frac{1}{n_1^{4}})$ be the set of all $y \in \R^{\b_2}$ such that there exists $x \in  \Pi_{\b_1} \tP_{n_1}((1 - \frac{1}{n_1^2})s)$ for which  $ y - \psi_{\b_1, \b_2}(x) \perp \psi_{\b_1, \b_2}(\R^{\b_1})$  and $\|y - \psi_{\b_1, \b_2}(x) \|_{\ell_\infty} < \frac{1}{n_1^{4}}.$ Then, $y \in S_{\b_1, \b_2}(\frac{1}{n_1^{4}})$ implies the following. \ben \item $y \in \Pi_{\b_2} \tP_{n_2}((1 - \frac{1}{2n_1^2})s)$ and \item 
 $|Q_{\b_2} (y)| \geq c^{(\frac{n_2}{n_1})^2} |Q_{\b_2} (\psi_{\b_1, \b_2}(x))|.$\een
\end{claim}
\begin{proof}
The first assertion of the claim follows from the triangle inequality. To see the second assertion, 
let the vector $w \in \R^{V(\T_{n_2})}$ equal $0$ on all the coordinates indexed by $V(\T_{n_2})\setminus \b_2$ and equal $\psi_{\b_1, \b_2}(x) - y$ on coordinates indexed by $\b_2$.
We know that $x \in  \Pi_{\b_1} \tP_{n_1}((1 - \frac{1}{n_1^2})s)$. Therefore, \ben \item[$(\ast)$] $Q_{\b_2} (\psi_{\b_1, \b_2}(x)) -w$ has dimension $n_2^2 - |\b_2|$, and contains an axis aligned cube of side length $\frac{c}{n_1^2},$ and hence a euclidean ball of radius $\frac{c}{n_1^2}$.\een Since every constraint defining $\tP_{n_2}(s)$ has the form $x_a + x_b - x_c - x_d \leq s_i,$ or $x_0 = 0$, \ben \item[$(\ast \ast)$] the affine spans of the codimension $1$ faces of the fiber polytope $Q_{\b_2}(y)$ are respectively translates of the affine spans of the corresponding codimension $1$ faces of $Q_{\b_2} (\psi_{\b_1, \b_2}(x)) -w$ by  euclidean distances that do not exceed $\frac{C}{ n_1^{4}}.$ \een
%By construction, the affine spans of $Q_{\b_2} (\psi_{\b_1, \b_2}(x)) -w$ and  $Q_{\b_2} (y)$ coincide. 
Therefore, by $(\ast)$ and $(\ast \ast)$, some translate of $(1 - \frac{C}{n_1^{2}})Q_{\b_2} (\psi_{\b_1, \b_2}(x))$ is contained inside $Q_{\b_2} (y)$, completing the proof of Claim~\ref{cl:2.2}.
\end{proof}
%Also, $F_2$ being the volume of a section of $P_{n_2}$, which is in turn contained in a $n_2^2$ dimensional cube of sidelength $Cn_2^2$, is by \cite{Ball1}, bounded above by $2^{\b_2/2} (Cn_2^2)^{n_2^2- |\b_2|},$ and so we have (\ref{eq:2.8}) below.
Let $K$ denote the intersection of the origin symmetric cube of radius $\frac{1}{n_1^{4}}$ in $\R^{\b_2}$ with the orthocomplement of $\psi_{\b_1, \b_2}(\R^{\b_1})$. By the lower bound of $1$ on the volume of a central section of the unit  cube (due to Vaaler \cite{Vaaler}), it follows that the volume of $K$ is at least $\left(\frac{1}{n_1^{4}}\right)^{|\b_2| - |\b_1|}.$
The inequalities below now follow from (\ref{eq:2.6}) and Claim~\ref{cl:2.2}.
\beqs |\tP_{n_2}(s)|  & = & \int\limits_{\Pi_{\b_2} \tP_{n_2}(s)}|Q_{\b_2}(y)|dy\\
                             & \geq & \int\limits_{\Pi_{\b_2} \tP_{n_2}((1 - \frac{1}{2n_1^2})s)} F_2(y) dy\lab{eq:2.8}\\
& \geq & \int\limits_{ S_{\b_1, \b_2}(\frac{1}{n_1^4})} F_2(y) dy\\
& \geq & \vol(K)\int\limits_{\psi_{\b_1, \b_2}(\Pi_{\b_1} \tP_{n_1}((1 - \frac{1}{n_1^2})s))} c^{(\frac{n_2}{n_1})^2}F_2(z) dz\\
& \geq &  \vol(K)\int\limits_{\Pi_{\b_1} \tP_{n_1}((1 - \frac{1}{n_1^2})s)}c^{(\frac{n_2}{n_1})^2} F_1(x)^{\left(\frac{n_2}{n_1}\right)^2}dx\nonumber\\
& \geq &  c^{(\frac{n_2}{n_1})^2}\left(\frac{1}{n_1^{4}}\right)^{|\b_2|-|\b_1|}\int\limits_{\Pi_{\b_1} \tP_{n_1}((1 - \frac{1}{n_1^2})s)} F_1(x)^{\left(\frac{n_2}{n_1}\right)^2}dx.\eeqs

By Lemma~\ref{lem:3-}, $ n_1^{-Cn_1} \leq |\Pi_{\b_1} \tP_{n_1}(s)| \leq n_1^{Cn_1}$, for some universal positive constant $C > 1.$ Also, $c  |\Pi_{\b_1} \tP_{n_1}(s)| \leq |\Pi_{\b_1} \tP_{n_1}((1 - \frac{1}{n_1^2})s)| \leq |\Pi_{\b_1} \tP_{n_1}(s)|.$
%Since $n_2 \geq n_1$, 
\beqs \int\limits_{\Pi_{\b_1} \tP_{n_1}((1 - \frac{1}{n_1^2})s)} F_1(x)^{\left(\frac{n_2}{n_1}\right)^2}dx & \geq & |\Pi_{\b_1} \tP_{n_1}((1 - \frac{1}{n_1^2})s)|^{1 - (n_2/n_1)^2}  \\ &\times&\left(\int\limits_{\Pi_{\b_1} \tP_{n_1}((1 - \frac{1}{n_1^2})s)} F_1(x)dx\right)^{\left(\frac{n_2}{n_1}\right)^2}\\
& \geq &  |\Pi_{\b_1} \tP_{n_1}(s)|^{1 - (n_2/n_1)^2}  | \tP_{n_1}((1 - \frac{1}{n_1^2})s)|^{\left(\frac{n_2}{n_1}\right)^2}\\
& \geq &  |\Pi_{\b_1} \tP_{n_1}(s)|^{1 - (n_2/n_1)^2}  \left(c| \tP_{n_1}(s)|\right)^{\left(\frac{n_2}{n_1}\right)^2}\\
& \geq & (Cn_1^{Cn_1})^{1 - (n_2/n_1)^2} | \tP_{n_1}(s)|^{\left(\frac{n_2}{n_1}\right)^2}. \eeqs
Thus,
\beq | \tP_{n_1}(s)|^{\left(\frac{n_2}{n_1}\right)^2} \leq (Cn_1^{Cn_1})^{(n_2/n_1)^2-1} \left(n_1^{4}\right)^{|\b_2|-|\b_1|}|\tP_{n_2}(s)|,  \eeq
which gives us \beq| \tP_{n_1}(s)|^{\left(\frac{1}{n_1}\right)^2} & \leq & (Cn_1^{Cn_1})^{(1/n_1^2)-(1/n_2^2)} \left(n_1^{4}\right)^{\frac{|\b_2|-|\b_1|}{n_2^2}}|\tP_{n_2}(s)|^{\frac{1}{n_2^2}}\\
& \leq & |\tP_{n_2}(s)|^{\frac{1}{n_2^2}}n_1^{\frac{C}{n_1}}\\
& \leq & |\tP_{n_2}(s)|^{\frac{1}{n_2^2}}\left(1 + \frac{C \log n_1}{n_1}\right).\eeq
\end{proof}
For a positive integer $n$, let $[n]$ denote the set of positive integers less or equal to $n$, and let $[n]^2$ denote $[n]\times [n]$.
In what follows, we will use $v$ to denote an arbitrary vertex in $V(\T_{n_3})$. Then, by symmetry, \beq \frac{\int_{P_{n_3}(s)} x(v) dx}{|P_{n_3}(s)|} & = &  \left(\frac{1}{n_3^2}\right)\sum_{v' \in V(\T_{n_3})}  \frac{\int_{P_{n_3}(s)} x(v') dx}{ |P_{n_3}(s)|} \\
& = &   \frac{\int_{P_{n_3}(s)} \left(\frac{\sum_{v' \in V(\T_{n_3})} x(v')}{n_3^2}\right)dx}{|P_{n_3}(s)|} \\
& = & 0.\lab{eq:2.10} \eeq
The linear map $u:P_{n_3}(s) \rightarrow \tP_{n_3}(s)$ defined  by $u(x)(v) = x(v) - x(0)$ is surjective and volume preserving. Therefore,  
\beq \frac{\int_{\tP_{n_3}(s)} x(v) dx}{|\tP_{n_3}(s)|} & = &   \frac{\int_{P_{n_3}(s)} u(x)(v) dx}{ |P_{n_3}(s)|} \\
& = &  \frac{\int_{P_{n_3}(s)} x(v) dx}{|P_{n_3}(s)|} -  \frac{\int_{P_{n_3}(s)} x(0) dx}{|P_{n_3}(s)|}\\
& = & 0.\eeq
\begin{lem}\lab{lem:4}
Let $C < n_2 < n_3$. Then, 
\beq |P_{n_2}(s)|^{\frac{1}{n_2^2}} \geq |P_{n_3}(s)|^{\frac{1}{n_3^2}}\left(1 - \frac{C (n_3 - n_2) \ln n_3}{n_3}\right).\eeq  
\end{lem}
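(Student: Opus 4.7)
The plan is to reduce to the $\tP_n(s)$ polytopes, and then apply Lemma~\ref{lem:3} at a common multiple together with a Pr\'ekopa--Fradelizi averaging argument to invert the direction of the inequality.

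First, using the two-sided estimate $|\tP_n(s)|^{1/n^2}(1 - C\log n/n) \leq |P_n(s)|^{1/n^2} \leq |\tP_n(s)|^{1/n^2}(1 + C\log n/n)$ stated above, it suffices to prove the analogous claim $|\tP_{n_2}(s)|^{1/n_2^2} \geq |\tP_{n_3}(s)|^{1/n_3^2}(1 - C(n_3 - n_2)\ln n_3/n_3)$, since the extra $O(\log n/n)$ corrections from the reduction are absorbed into the target term whenever $n_3 > n_2$. Next, set $N := n_2 n_3$, so that $n_2, n_3 \mid N$, and apply Lemma~\ref{lem:3} with $n_1 = n_3$ to obtain $|\tP_{n_3}(s)|^{1/n_3^2} \leq |\tP_N(s)|^{1/N^2}(1 + C\log n_3/n_3)$. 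The task then reduces to showing the bound $|\tP_N(s)|^{1/N^2} \leq |\tP_{n_2}(s)|^{1/n_2^2}(1 + C(n_3 - n_2)\ln n_3/n_3)$, which runs in the direction \emph{opposite} to that of Lemma~\ref{lem:3}.

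For this reverse bound I would introduce the averaging map $\pi : \R^{V(\T_N)} \to \R^{V(\T_{n_2})}$ sending $f$ to $\bar{f}$, where $\bar{f}(v) := (n_2/N)^2 \sum_{u \in \phi_{N,n_2}^{-1}(v)} f(u)$. Since the discrete Hessian is linear in $f$ and averaging preserves the constraint $\preccurlyeq s$, $\pi$ maps $\tP_N(s)$ into $\tP_{n_2}(s)$. By Pr\'ekopa's Theorem~\ref{thm:prekopa}, the marginal $G(\bar{f}) := |\pi^{-1}(\bar{f}) \cap \tP_N(s)|$ is log-concave on $\tP_{n_2}(s)$; repeating the symmetry argument established just before the lemma with $\T_N$ in place of $\T_{n_3}$ shows that $\tP_N(s)$ has centroid at the origin, and since $\pi$ is linear, so does $G$. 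Fradelizi's Theorem~\ref{thm:frad} then gives $G(0) \geq e^{-(n_2^2 - 1)} \sup G$, from which $|\tP_N(s)| = \int G\, d\bar{f} \leq e^{n_2^2 - 1} |\tP_{n_2}(s)| \cdot G(0)$. The fiber $\pi^{-1}(0) \cap \tP_N(s)$ consists of functions on $\T_N$ whose fiber-averages vanish, which are ``high-frequency'' with effective wavelength at most $n_2$; Bronshtein-type $\ell_\infty$-covering estimates for Lipschitz concave functions at this oscillation scale (as used in the introduction) give a sharp upper bound on $G(0)$. After taking the $1/N^2$-th power and combining with the Lemma~\ref{lem:3} bound, this yields the desired correction.

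The principal obstacle is producing a tight enough estimate on the fiber volume $G(0)$ to yield exactly the rate $(n_3 - n_2)\ln n_3/n_3$. The crude diameter bound $\|f\|_\infty \leq CN^2$ for $f \in \tP_N(s)$ is too weak by polynomial factors; one must exploit both the Hessian constraint and the vanishing of fiber-averages to conclude that typical $f \in \pi^{-1}(0)$ oscillate at scale $n_2$ rather than $N$. Matching the resulting bound on $\log G(0)/N^2$ against the dimension count $(N^2 - n_2^2)/N^2 = 1 - 1/n_3^2$ and against the $(1 + C\log n_3/n_3)$ correction from Lemma~\ref{lem:3} is what produces the final $(n_3 - n_2)/n_3$ factor.
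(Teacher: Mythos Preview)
Your proposal has a genuine gap: the ``principal obstacle'' you identify --- bounding the fiber volume $G(0)$ --- is in fact the entire content of the lemma, and you give no actual argument for it. The reference to Bronshtein is a non-sequitur: Bronshtein's theorem bounds covering numbers of spaces of convex functions, not volumes of specific fiber polytopes, and there is no mechanism in your sketch that converts one into the other. More seriously, your detour through $N = n_2 n_3$ makes the problem strictly harder. The fiber $\pi^{-1}(0)\cap \tP_N(s)$ has dimension $N^2 - n_2^2 = n_2^2(n_3^2 - 1)$, which is almost all of $N^2$; any crude $\ell_\infty$-diameter bound on it contributes a factor like $(CN^2)^{1 - 1/n_3^2}$ after taking the $1/N^2$-th root, far too large. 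To make your route work you would essentially need to know $G(0)^{1/(N^2 - n_2^2)} \approx \mathbf{f}(s)$ to the right precision, which is no easier than the original statement.

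The paper avoids this by \emph{not} passing to a common multiple. It embeds $[n_2]^2$ directly into $V(\T_{n_3})$ and lets $\b$ be the complementary thin L-shaped strip, so that $|\b| = O\bigl(n_3(n_3 - n_2)\bigr)$. Using that the centroid of $\tP_{n_3}(s)$ is $0$ together with Brunn--Minkowski concavity of $x\mapsto |Q_\b(x)|^{1/(n_3^2-|\b|)}$ (via Jensen and $L^p$-monotonicity), one obtains $|Q_\b(0)| \geq |\tP_{n_3}(s)|/|\Pi_\b \tP_{n_3}(s)|$. The fiber $Q_\b(0)$ then embeds (up to harmless polynomial factors) into $\tP_{n_2+2}(s)$, and the crude bound $|\Pi_\b \tP_{n_3}(s)| \leq (Cn_3^2)^{|\b|}$ from Lemma~\ref{lem:3-} suffices precisely because $|\b|/n_3^2 \sim (n_3-n_2)/n_3$ is small. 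That smallness of $|\b|$ relative to the ambient dimension is the key structural point; your averaging construction destroys it.
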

\begin{proof}  Let $\rho:V(\T_{n_2}) \ra [n_2]^2\subseteq \Z^2$ be the unique map that satisfies $\phi_{0, n_2} \circ \rho = id$ on $V(\T_{n_2})$. We embed $V(\T_{n_2})$ into $V(\T_{n_3})$ via  
  $\phi_{0, n_3} \circ \rho,$ and define $\b$ to be $ V(\T_{n_3})\setminus (\phi_{0, n_3} \circ \rho(V(\T_{n_2}))).$  Note that $0 \in \b$, since $0 \not \in [n_2].$ Recall that $Q_{ \b}(x)$ was defined to be the fiber polytope over $x$, that arises from the projection map $\Pi_{\b}$ of $\tP_n(s)$ onto $\R^{\b}.$
Thus,
\beqs \int\limits_{\R^{\b\setminus \{0\}}} \left(\frac{|Q_{\b}(x)| }{|\tP_{n_3}(s)|}\right)x dx & = & \Pi_\b  \left(\frac{\int_{\tP_{n_3}(s)} x(v) dx}{|\tP_{n_3}(s)|}\right)\\ 
& = & 0.\eeqs
By Theorem~\ref{thm:prekopa}, $\frac{|Q_{\b}(x)| }{|\tP_{n_3}(s)|}$ is a logconcave function of $x\in \tP_{n_3}(s)$. 
$\frac{|Q_{\b}(x)| }{|\tP_{n_3}(s)|}$ is a non-negative and integrable function of $x$, and hence by the Brunn-Minkowski inequality, it follows that
\beqs \int\limits_{\R^{\b\setminus \{0\}}} \left(\frac{|Q_{\b}(x)|}{|\tP_{n_3}(s)|}\right)|Q_{\b}(x)|^{\frac{1}{n_3^2 - |\b|}} dx \leq |Q_{\b}(0)|^{\frac{1}{n_3^2 - |\b|}} .\eeqs
Therefore, 
\beqs \int\limits_{\Pi_{\b} \tP_{n_3}(s)} |Q_{\b}(x)|^{1 + \frac{1}{n_3^2 - |\b|}} \left(\frac{dx}{|\Pi_\b\tP_{n_3}(s)|}\right) \leq \left(\frac{| \tP_{n_3}(s)|}{|\Pi_{\b} \tP_{n_3}(s)|}\right)|Q_{\b}(0)|^{\frac{1}{n_3^2 - |\b|}}.\eeqs
%Let us rescale all these polytopes with  homothety of a fixed scale such that $\frac{| \tP_{n_3}(s)|}{|\Pi_{\b} \tP_{n_3}(s)|} = 1.$
By the monotonic increase of $L_p(\mu)$ norms as $p$ increases from $1$ to $\infty$,  for the probability measure $\mu(dx) = \frac{dx}{|\Pi_\b\tP_{n_3}(s)|}$, we see that 
\beq \int\limits_{\Pi_{\b}\tP_{n_3}(s)} |Q_{\b}(x)|^{1 + \frac{1}{n_3^2 - |\b|}} \frac{dx}{|\Pi_\b \tP_{n_3}(s)|} & \geq &  \left(\int\limits_{\Pi_{\b}\tP_{n_3}(s)} |Q_{\b}(x)| \frac{dx}{|\Pi_\b \tP_{n_3}(s)|}\right)^{1 + \frac{1}{n_3^2 - |\b|}}\\ & = & \left(\frac{| \tP_{n_3}(s)|}{|\Pi_{\b} \tP_{n_3}(s)|}\right)^{1 + \frac{1}{n_3^2 - |\b|}}.\eeq
It follows that \beq |Q_{\b}(0)| \geq \frac{| \tP_{n_3}(s)|}{|\Pi_{\b} \tP_{n_3}(s)|}.\eeq
Suppose that $n_2 + 2 < n_3$.
Let $\rho_+:V(\T_{n_2+2}) \ra [n_2+2]^2\subseteq \Z^2$ be the unique map that satisfies $\phi_{0, n_2+2} \circ \rho_+ = id$ on $V(\T_{n_2+2})$. We embed $V(\T_{n_2+2})$ into $V(\T_{n_3})$ via  
  $\phi_{0, n_3} \circ \rho_+,$ and define $\tb$ to be $ V(\T_{n_3})\setminus (\phi_{0, n_3} \circ \rho_+(V(\T_{n_2+2}))).$ 
We observe that $|\tP_{n_2+2}(s(1 + \frac{2}{(n_2+2)^2}))|$ is greater or equal to $|Q_{\b}(0)|(\frac{1}{(n_2+2)^2}))^{|\b|-|\tb|},$ since $\phi_{0, n_3} \circ \rho_+,$ induces an isometric map from $Q_\b(0) + [0, \frac{1}{(n_2+2)^2}]^{\b\setminus\tb}$ into $\tP_{n_2+2}(s(1 + \frac{2}{(n_2+2)^2})).$
Thus,
\beqs |\tP_{n_2 + 2}(s)| & = &  (1 + \frac{2}{(n_2+2)^2})^{-(n_2+2)^2+1}|\tP_{n_2+2}(s(1 + \frac{2}{(n_2+2)^2}))|\\
&  \geq & e^{-2} |Q_{\b}(0)|(\frac{1}{(n_2+2)^2})^{|\b|-|\tb|}\\
&  \geq & \frac{e^{-2}| \tP_{n_3}(s)|(\frac{1}{(n_2+2)^2})^{|\b|-|\tb|} }{|\Pi_{\b} \tP_{n_3}(s)|}\\
& \geq & | \tP_{n_3}(s)| (Cn_3)^{-Cn_3(n_3-n_2)}.\eeqs
Noting that $\tP_{n_2+2}(s)$ contains a unit cube and hence has volume at least $1$, we see that 
\beq |\tP_{n_2 + 2}(s)|^{\frac{1}{(n_2 + 2)^2}} & \geq & |\tP_{n_2+2}(s)|^{\frac{1}{n_3^2}}\\
& \geq & | \tP_{n_3}(s)|^{\frac{1}{n_3^2}} (C n_3)^{-C(1-\frac{n_2}{n_3})}\\
& \geq &  | \tP_{n_3}(s)|^{\frac{1}{n_3^2}} \left(1 - \frac{C (n_3 - n_2) \ln n_3}{n_3}\right).\eeq
Noting that $n_2 + 2 < n_3$ and relabeling $n_2 + 2$ by $n_2$  gives us the lemma.
\end{proof}

%$$  \De_w^{\frac{1}{m}} =\frac{|L|}{12} \exp\left( \frac{12\mathbf{L}(1)}{\pi}\right) + o(1),$$
%where $\mathbf{L}$ is the Lobachevsky function,  $$\mathbf{L}(x) = - \int_0^x \log(2\sin(t)) dt.$$
%The right hand side above is less or equal to (4 + o(1)).
%Now (\ref{eq:2.29}) tells us that $$|L| \geq (4 + o(1))^{-1} | \De_w|^{m^{-1}} =  {4}^{-1}\exp\left(\frac{6G}{\pi}\right) - o(1).$$
%This in turn tells us that if $s_0 = s_1 = s_2$ are such that $w_0^{(n)} = w_1^{(n)} = w_2^{(n)}  = 2$, then 

%While the above bound is not as tight as the one that follows, we have included it to emphasize the role of $ \De_w$.
%\end{comment}
We will need the notion of differential entropy (see page 243 of \cite{Cover}).
\begin{defn}[Differential entropy]
Let ${\displaystyle X}$ be a random variable supported on a finite dimensional Euclidean space $\R^m$,
associated with a measure $\mu$ that is absolutely continuous with respect to the Lebesgue measure. Let the Radon-Nikodym derivative of $\mu$ with respect to the Lebesgue measure be denoted $f$. The differential entropy of $X$, denoted ${\displaystyle h(X)}$ (which by overload of notation, we shall also refer to as the differential entropy of $f$, i.e. $h(f)$), is defined  as
${\displaystyle h(X)=-\int _{\R^m}f(x)\ln f(x)\,dx}$.
\end{defn}
\begin{comment}
We will also need the notion of conditional differential entropy ${\displaystyle h(X|Y)}$ (page 249 of \cite{Cover}).

\begin{defn}[Conditional differential entropy]

Let ${\displaystyle (X, Y)}$ where $X \in \R^m$, and $Y \in \R^n$ be a random variable supported on a finite dimensional Euclidean space $\R^{m}\times \R^{n}$ having a joint density function $f(x, y)$ with respect to the Lebesgue measure on $\R^{m}\times \R^{n}$. Then,
$${\displaystyle h(X|Y)=-\int _{\R^{m} \times {\R^n}} f(x,y)\log f(x|y)\,dxdy}.$$
 Let the Radon-Nikodym derivative of $\mu$ with respect to the Lebesgue measure be denoted $f$. The differential entropy of $X$, denoted ${\displaystyle h(X)}$ (which by overload of notation, we shall also refer to as the differential entropy of $f$, i.e. $h(f)$), is defined  as
${\displaystyle h(X)=-\int _{\R^m}f(x)\ln f(x)\,dx}$.
\end{defn}

\end{comment}
The following Lemma is well known, but we include a proof for the reader's convenience.
\begin{lem}\lab{lem:5}
The differential entropy of a  mean $1$ distribution with a bounded Radon-Nikodym derivative with respect to the Lebesgue measure, supported on $[0, \infty)$ is less or equal to $1$, and equality is achieved on the exponential distribution.
\end{lem}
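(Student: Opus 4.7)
The plan is to reduce the statement to the non-negativity of relative entropy against the unit exponential distribution. Let $g(x) = e^{-x} \mathbf{1}_{[0,\infty)}(x)$, which is a probability density on $[0,\infty)$ with mean $\int_0^\infty x e^{-x} dx = 1$ and differential entropy
\begin{equation*}
h(g) = -\int_0^\infty e^{-x} \ln e^{-x}\, dx = \int_0^\infty x e^{-x}\, dx = 1,
\end{equation*}
so the asserted equality case is immediate. This handles the second claim and also identifies the right comparison distribution.

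For the inequality, I would let $f$ denote the density of an arbitrary mean $1$ distribution on $[0,\infty)$ satisfying the boundedness hypothesis, and consider the Kullback--Leibler divergence
\begin{equation*}
D(f \,\|\, g) = \int_0^\infty f(x) \ln \frac{f(x)}{g(x)}\, dx,
\end{equation*}
which is well defined (the boundedness of $f$ together with its support in $[0,\infty)$ and finite first moment are enough to ensure that both $\int f \ln f$ and $\int f \ln g = -\int x f(x)\, dx = -1$ are finite, with the usual convention $0 \ln 0 = 0$). By Jensen's inequality applied to the convex function $t \mapsto t \ln t$, or equivalently by Gibbs' inequality, $D(f \,\|\, g) \geq 0$.

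Expanding this bound gives
\begin{equation*}
0 \leq \int_0^\infty f(x) \ln f(x)\, dx - \int_0^\infty f(x) \ln g(x)\, dx = -h(f) + \int_0^\infty x f(x)\, dx = -h(f) + 1,
\end{equation*}
so $h(f) \leq 1$, as required. The only non-routine point is ensuring the integrals are well defined; the boundedness of the Radon--Nikodym derivative controls $\int f \ln f$ from above (it is bounded by $(\sup f)\cdot |\!\operatorname{supp}(f)|$ on any bounded piece, and on $\{f < 1\}$ the integrand $f \ln f$ is bounded in absolute value by $1/e$), while the mean-one hypothesis controls $\int f \ln g$. Equality in Gibbs' inequality holds if and only if $f = g$ almost everywhere, which recovers the exponential distribution as the unique maximizer.
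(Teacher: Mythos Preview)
Your proof is correct and follows essentially the same approach as the paper: both arguments compare $f$ to the unit exponential density $g(x)=e^{-x}$ via the nonnegativity of the Kullback--Leibler divergence $D(f\|g)$, then use the mean-one hypothesis to identify $\int f \ln g = -1$ and conclude $h(f)\le 1$. Your version adds some discussion of well-definedness and the equality case that the paper omits, but the core idea is identical.
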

\begin{proof}
Let $f:[0, \infty) \ra \R$ denote a density supported on the non-negative reals, whose associated distribution $F$ has mean $1$. Let $g:[0, \infty) \ra \R$ be given by $g(x) := e^{-x}$. The relative entropy between $f$ and $g$ is given by 

\beq D(f||g) := \int_{[0, \infty)} f(x) \ln\left(\frac{f(x)}{g(x)}\right)dx,\eeq and can be shown to be non-negative for all densities $f$ using Jensen's inequality.
We observe that \beq D(f||g) & = & -h(f) +  \int_{[0, \infty)} f(x) \ln\left({e^{x}}\right)dx\\
                                          & = & - h(f) + 1, \eeq because $F$ has mean $1$.
This implies that $h(f) \leq 1 = h(g)$.
\end{proof}

\begin{figure}\label{fig:factorize}
\begin{center}
\includegraphics[scale=0.75]{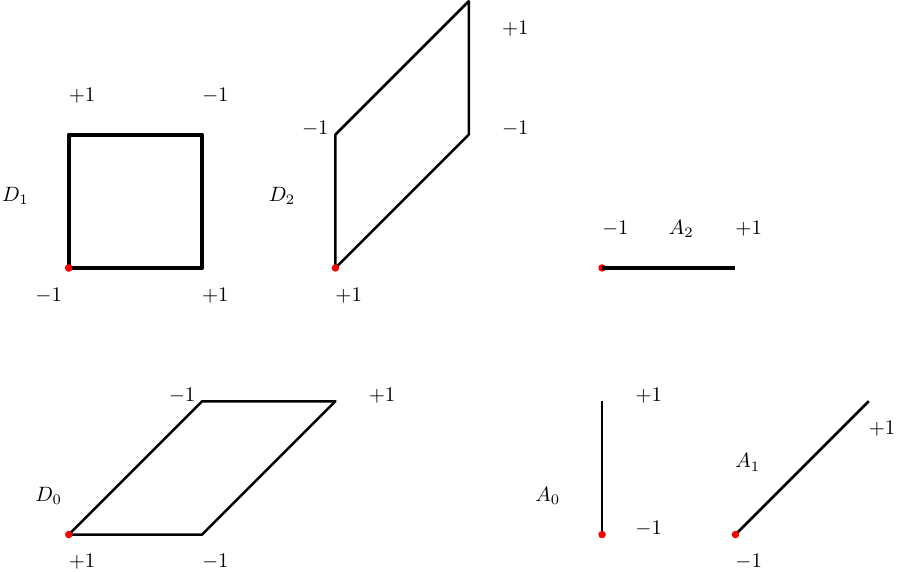}
\caption{We factorize the second order $D_i$ into first order operators $A_i$. A red dot indicates the point at which the operator is evaluated.}
\end{center}
\end{figure}

We define the first order difference operators $A_0$, $A_1$ and $A_2$ on $\R^{V(\T_n)}$ given by 
\beq\lab{eq:A}
A_0 f(v_1-1, v_2-1) =  - f(v_1-1, v_2-1) + f(v_1 - 1, v_2).\nonumber\\
A_1 f(v_1, v_2 ) =   - f(v_1 - 1, v_2 - 1) + f(v_1, v_2).\nonumber\\
A_2 f(v_1-1, v_2-1) =  - f(v_1 - 1, v_2 - 1) + f(v_1, v_2).\nonumber\\
\eeq

\begin{lem}\lab{lem:6}
If $2 = s_0 \leq s_1 \leq s_2$, \beqs |P_{n}(s)| \leq \exp\left((1 + \ln (2(1 + C/n))) n^2\right).\eeqs
\end{lem}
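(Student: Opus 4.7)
The strategy exploits the factorization of the type-$0$ discrete Hessian $D_0$ into first-order difference operators (Figure~\ref{fig:factorize}) together with the entropy bound of Lemma~\ref{lem:5} on nonnegative densities with prescribed mean. The main term $(1+\ln 2) n^2$ matches (by Stirling) the $(n^2-1)$-dimensional Hausdorff volume $\frac{n \,(2n^2)^{n^2-1}}{(n^2-1)!} \sim (2e)^{n^2}/(2\sqrt{2\pi})$ of the simplex $\Sigma := \{Y\in\R^{V(\T_n)} : Y\geq 0,\ \sum_v Y_v = 2n^2\}$, so the whole scheme amounts to embedding $P_n(s)$ into $\Sigma$ (modulo log-concavity corrections).

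For $g\in P_n(s)$ I define the type-$0$ slacks $Y_v := s_0 - D_0 g(e_v) \geq 0$. Telescoping on the torus gives $\sum_v Y_v = s_0 n^2 = 2n^2$ for every $g$, and translation invariance of the uniform measure on $P_n(s)$ forces $\E Y_v = 2$, so Lemma~\ref{lem:5} bounds each marginal entropy by $1+\ln 2$. Writing $D_0 = A_a A_b$ via the factorization and setting $h := A_b g$, the constraint $D_0 g \leq s_0$ becomes $A_a h \leq 2$, which decouples across the $n$ orbits of $A_a$. Along each such orbit (of length $n$) the first differences $d_i$ of $h$ satisfy $d_i \leq 2$ and $\sum_i d_i = 0$, lying in a $(n-1)$-dimensional simplex of Hausdorff volume $\frac{\sqrt n\,(2n)^{n-1}}{(n-1)!} \sim (2e)^{n-1}/\sqrt{2\pi n}$. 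Taking the product over the $n$ orbits produces the bulk contribution $(2e)^{n^2-n}$ to the volume bound.

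To pass from $h$ back to $g$, I parameterize $g\in V_0$ by $(h,r)$, where $r$ is an $(n-1)$-vector of representative values of $g$, one per $A_b$-orbit, after enforcing mean-zero. Then $|P_n(s)| = \int V_{\mathrm{fib}}(h)\,dh$, where $V_{\mathrm{fib}}(h)$ is the volume of the $r$-fiber intersected with $P_n(s)$. Pr\'ekopa's theorem (Theorem~\ref{thm:prekopa}) makes $V_{\mathrm{fib}}$ log-concave in $h$, and Fradelizi's inequality (Theorem~\ref{thm:frad}) bounds its supremum by $e^{n-1}$ times its value at the center of mass. Combined with the $O(n^2)$ upper bound on $\|g\|_\infty$ for $g\in P_n(s)$ (coming from integrating $\nabla^2 g \preccurlyeq s$ along lattice paths of length $n$, exactly as in the proof of Lemma~\ref{lem:3-}), this yields a correction factor of $e^{O(n)}$ for the $r$-fiber together with the starting values of $h$ on each $A_a$-orbit. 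Assembling gives $\ln |P_n(s)| \leq n^2(1+\ln 2) + O(n) = (1 + \ln(2(1+C/n)))n^2$ for an appropriate $C = C(s)$.

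The main obstacle is step 3: controlling the coupling of starting values of $h$ across $A_b$-orbits and the $r$-fiber contribution to be of size $e^{O(n)}$, rather than the naive $e^{O(n\log n)}$ obtained from a pure diameter-times-dimension bound. This is exactly where log-concavity via Pr\'ekopa and the center-of-mass comparison via Fradelizi become essential; a coarser bound here would only produce $(1 + \ln(2(1+C\log n/n)))n^2$, falling short of the stated $O(n)$ correction term.
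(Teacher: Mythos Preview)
Your approach---passing to the type-$0$ slacks $Y_v = s_0 - D_0 g(e_v)$ and invoking Lemma~\ref{lem:5} on their marginals---is close in spirit to the paper's proof. The paper, however, does not decompose into $A_a$-orbits and an $r$-fiber at all: it applies a single linear change of variables $\phi$ (the double-difference map $g\mapsto(g_{(a,b)}-g_{(a,b-1)}-g_{(a-1,b)}+g_{(a-1,b-1)})_{a,b}$) on the mean-zero hyperplane, asserts that $\phi$ has integral matrix and hence $|\det\phi|\geq 1$, so that $|P_n(s)|\leq|\phi(P_n(s))|$, and then bounds $|\phi(P_n(s))|$ directly by the product of the marginal entropies. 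The only correction term in the paper is the $(1+C/n)$ factor coming from padding by a unit segment perpendicular to the sum-zero hyperplane; there is no $(n-1)$-dimensional fiber to control.

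The gap in your outline is exactly where you locate it, and Fradelizi does not close it. You write $|P_n(s)|=\int V_{\mathrm{fib}}(h)\,dh$ and invoke Theorem~\ref{thm:frad} to get $\sup_h V_{\mathrm{fib}}(h)\leq e^{n-1}V_{\mathrm{fib}}(\bar h)$ with $\bar h=0$ by symmetry. But your only stated control on $V_{\mathrm{fib}}(0)$ is the diameter bound $\|g\|_\infty\leq Cn^2$, which gives $V_{\mathrm{fib}}(0)\leq (Cn^2)^{n-1}=e^{O(n\log n)}$; Fradelizi multiplies this by at most $e^{n-1}$, so $\sup V_{\mathrm{fib}}$ is still $e^{O(n\log n)}$. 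Likewise, the ``starting values of $h$ on each $A_a$-orbit'' form an $(n-1)$-dimensional vector with entries bounded only by $\|A_b g\|_\infty\leq Cn$ (the Lipschitz bound), contributing another $(Cn)^{n-1}=e^{O(n\log n)}$. Neither Pr\'ekopa nor Fradelizi converts a diameter-times-dimension bound of $e^{O(n\log n)}$ into $e^{O(n)}$; they compare a log-concave density to its value at the mean, but give no absolute bound on that value. As written, your argument yields only $\exp\bigl((1+\ln 2)n^2+O(n\log n)\bigr)$, i.e.\ the weaker $(1+\ln(2(1+C\log n/n)))n^2$ that you explicitly say you wish to avoid. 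To reach the stated lemma one needs an injective change of variables with controlled Jacobian (as the paper claims for $\phi$), or a genuinely different bound on the central fiber; the log-concavity tools you cite are not enough by themselves.
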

\begin{proof}
\begin{comment}
 Note that the edge $$\left(\{(1, 1), (1, 2), (2, 2), (2, 1)\}\right) \in E_1(\T_{n+1}),$$ and thus that the Hessian corresponding to this edge is at most $s_1$. Only for the purposes of this proof, we assume $2 = s_1 \leq s_0 \leq s_2.$ We list $[n]^2$ in lexicographically increasing order as $$((1, 1), \dots, (1, n), (2, 1), \dots, (2, n), \dots, (n, 1), \dots, (n, n))$$  and denote this sequence by $((p_1, q_1), \dots, (p_{n^2}, q_{n^2}))$.
Let $x$ be sampled uniformly at random from $\tP_{n+1}(s).$ Let $\b$ denote the set of vertices $$\{(1, 1), (2, 2), \dots, (n, n),  (1, 0), \dots, (n, 0)\}.$$ Let $x_\b$ be the orthogonal  projection of $x$ on to $\R^\b$. We see that $x_\b$ has a logconcave density. Further, by Theorem~\ref{thm:frad} and Lemma~\ref{lem:3-}, this density, which we denote $\rho_\b(x_\b)$, is large at the mean (which is $0_\b$, since by symmetry, the mean of $x$ is $0$) in the following quantifiable sense.
$$\rho_\b(0_\b) = \frac{|\tP_{n+1}(s) \cap\{x_\b = 0_\b\}|}{|\tP_{n+1}(s)|}\geq n^{-Cn}.$$ It thus suffices to give an  upper bound on $|\tP_{n+1}(s) \cap\{x_\b = 0_\b\}|.$
We observe that the projection of $\tP_{n+1}(s) \cap\{x_\b = 0_\b\}$ on to $\R^{[n]^2}$ is a full dimensional convex body that is isometric to the original convex set $\tP_{n+1}(s) \cap\{x_\b = 0_\b\}.$
\end{comment}
The map $\phi$ that takes $x \in P_{n}(s)$ to $\phi(x) \in \R^{[n]^2}$, where  $(\phi(x))_{(p_i, q_i)}$ equals $x_{(p_i, q_i)} - x_{(p_{i}, q_{i-1})}  - x_{(p_{i-1}, q_{i})} + x_{(p_{i-1}, q_{i-1})}$ is a linear transformation. Further, the image contains a codimension 1 section of an cube of sidelength $1$, which by Vaalar's Theorem has volume at least 1. The  Jacobian determinant of the transformation matrix from the set of points in $\R^{[n^2]}$, the sum of whose coordinates is $0$, to itself  has magnitude at least $1$ since the matrix is integral. Thus it suffices to bound from above,  $|\phi(P_{n}(s))|.$ Let $x'$ be sampled uniformly at random from $\phi(P_{n}(s))$. We also add to $x'$ an independent random vector $y'$ from the one dimensional line segment centered at $0$, perpendicular to $P_n(s)$ of length $1$. We then see that $x' + y'$ has mean $0$, and further, each coordinate is bounded above by $2(1 + C/n)$. Therefore, by Lemma~\ref{lem:5}, the differential entropy of each coordinate is at most $1 + \ln (2(1 + C/n))$. Since it is well known that the joint differential entropy of a vector valued random variable, is less or equal to the sum of the differential entropies of its marginals, we see that 
$$|\phi(P_{n}(s))|  \leq (2(1+ C/n) e)^{n^2}.$$ This proves the Lemma.
\end{proof}

We will use the lemmas in this section to prove the following.

\begin{lem}\lab{lem:2.8}
Let $s_0 = 2$. Then, as $n\ra \infty$, $|P_n(s)|^{\frac{1}{n^2}}$  converges to a limit in the interval $[1, 2e]$.
\end{lem}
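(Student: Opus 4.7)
The plan is to show that $a_n := |P_n(s)|^{1/n^2}$ is asymptotically trapped in $[1, 2e]$ and that $\limsup a_n = \liminf a_n$, by combining the two-sided bound between $|P_n(s)|$ and $|\tilde P_n(s)|$, the divisibility monotonicity of Lemma~\ref{lem:3}, and the near-scale comparison of Lemma~\ref{lem:4}.

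First, I would nail down the two trivial bounds. For the upper bound, Lemma~\ref{lem:6} directly gives $|P_n(s)|^{1/n^2} \le 2e(1 + C/n)$, so $\limsup_n a_n \le 2e$. For the lower bound, since $2 = s_0 \le s_1 \le s_2$, the axis-aligned unit cube $[0,1]^{V(\T_n)\setminus\{0\}}$ embeds in $\tilde P_n(s)$: any $g$ with values in $[0,1]$ satisfies $\nabla^2 g(e) = -g(a)+g(b)-g(c)+g(d) \le 2 = s_0$. Hence $|\tilde P_n(s)| \ge 1$, and using the stated comparison $|P_n(s)|^{1/n^2} \ge |\tilde P_n(s)|^{1/n^2}(1 - C\log n/n)$ yields $\liminf_n a_n \ge 1$.

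Second, I would prove convergence by showing that for every $\epsilon > 0$, $\liminf_m a_m \ge L - \epsilon$ where $L := \limsup_n a_n$. Since the ratio $a_n / \bar a_n$, with $\bar a_n := |\tilde P_n(s)|^{1/n^2}$, tends to $1$, we have $\limsup_n \bar a_n = L$, so I can pick $n_1$ large enough that $\bar a_{n_1} \ge L - \epsilon/4$. For an arbitrary large $m$, let $m' \in [m, m + n_1)$ be the smallest multiple of $n_1$ with $m' \ge m$. Applying Lemma~\ref{lem:3} to the divisibility pair $(n_1, m')$ gives
\[
\bar a_{m'} \;\ge\; \frac{\bar a_{n_1}}{1 + C\log n_1/n_1} \;\ge\; L - \epsilon/3,
\]
provided $n_1$ was chosen sufficiently large. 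Applying Lemma~\ref{lem:4} to $(n_2, n_3) = (m, m')$ then gives
\[
|P_m(s)|^{1/m^2} \;\ge\; |P_{m'}(s)|^{1/m'^2}\Bigl(1 - \tfrac{C(m' - m)\log m'}{m'}\Bigr) \;\ge\; |P_{m'}(s)|^{1/m'^2}\Bigl(1 - \tfrac{Cn_1 \log m'}{m'}\Bigr),
\]
and converting $|P_{m'}(s)|^{1/m'^2}$ back to $\bar a_{m'}$ via the $(1 - C\log m'/m')$ factor yields $a_m \ge (L - \epsilon/3)(1 - o_m(1))$, which is $\ge L - \epsilon$ for all $m$ large enough (depending on $n_1$ and hence on $\epsilon$). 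This shows $\liminf a_m \ge L - \epsilon$ for all $\epsilon > 0$, so the sequence converges to some value in $[1, 2e]$.

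The proof is structurally straightforward, so the main concern is bookkeeping rather than a conceptual obstacle: one has to choose $n_1$ first (large in $\epsilon$), then take $m$ much larger than $n_1$, so that the two error factors $C\log n_1/n_1$ and $C n_1 \log m'/m'$ are simultaneously small. This requires the two lemmas to combine in the right order---Lemma~\ref{lem:3} to transfer a lower bound from $\bar a_{n_1}$ to $\bar a_{m'}$ along the divisibility chain, and Lemma~\ref{lem:4} to interpolate across the short gap between $m$ and $m'$---which is the only subtle step in the argument.
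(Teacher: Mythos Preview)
Your proof is correct and follows essentially the same approach as the paper: both combine Lemma~\ref{lem:3} (divisibility monotonicity from $n_1$ to a nearby multiple $m'$) with Lemma~\ref{lem:4} (bridging the short gap between $m$ and $m'$), using an intermediate multiple of $n_1$ close to the target scale. The paper packages this as a uniform inequality $|P_{n_2}(s)|^{1/n_2^2} \ge |P_{n_1}(s)|^{1/n_1^2}(1 - C\log n_1/n_1)$ valid for all $n_2 \ge n_1^2$ and then takes limits, whereas you run the $\epsilon$-argument directly, but the underlying mechanism is identical.
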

\begin{proof}
By Lemma~\ref{lem:3} and Lemma~\ref{lem:6}, \beq 1 \leq  \liminf\limits_{n \ra \infty} |P_n(s)|^{\frac{1}{n^2}} \leq \limsup\limits_{n \ra \infty} |P_n(s)|^{\frac{1}{n^2}} \leq 2e.\eeq

Let $C < n_1^2  \leq n_2.$ Let $n_3 = (\lfloor \frac{n_2}{n_1}\rfloor + 1)  n_1.$ 
By Lemma~\ref{lem:3} and Lemma~\ref{lem:4},
\beqs |P_{n_1}(s)|^{\frac{1}{n_1^2}} & \leq &  |P_{n_3}(s)|^{\frac{1}{n_3^2}}\left(1 + \frac{C\log n_1}{n_1}\right)\\
& \leq & |P_{n_2}(s)|^{\frac{1}{n_2^2}}\left(1 - \frac{C (n_3 - n_2) \ln n_3}{n_3}\right)^{-1}\left(1 + \frac{C\log n_1}{n_1}\right)\\
& \leq & |P_{n_2}(s)|^{\frac{1}{n_2^2}}\left(1 - \frac{C n_1 \ln n_3}{n_3}\right)^{-1}\left(1 + \frac{C\log n_1}{n_1}\right)\\ 
& \leq & |P_{n_2}(s)|^{\frac{1}{n_2^2}}\left(1 - Cn_1 \left(\frac{\ln n_1^2}{n_1^2}\right)\right)^{-1}\left(1 + \frac{C\log n_1}{n_1}\right).\eeqs
This implies that \beqs |P_{n_2}(s)|^{\frac{1}{n_2^2}} \geq |P_{n_1}(s)|^{\frac{1}{n_1^2}}\left(1 - \frac{C\log n_1}{n_1}\right).\eeqs
As a consequence, \beqs \left(1 + \frac{C\log n_1}{n_1}\right)\liminf\limits_{n_2 \ra \infty} |P_{n_2}(s)|^{\frac{1}{n_2^2}} \geq |P_{n_1}(s)|^{\frac{1}{n_1^2}}. \eeqs Finally, this gives 
\beqs \liminf\limits_{n_2 \ra \infty} |P_{n_2}(s)|^{\frac{1}{n_2^2}} \geq \limsup\limits_{n_1 \ra \infty}|P_{n_1}(s)|^{\frac{1}{n_1^2}}, \eeqs implying 
\beqs 1 \leq  \liminf\limits_{n \ra \infty} |P_n(s)|^{\frac{1}{n^2}} = \lim\limits_{n \ra \infty} |P_n(s)|^{\frac{1}{n^2}} = \limsup\limits_{n \ra \infty} |P_n(s)|^{\frac{1}{n^2}} \leq 2e.\eeqs
\end{proof}
Together with the concavity of $\f_n:= |P_n(s)|^{\frac{1}{n^2-1}}$, this implies the following.
\begin{cor}\lab{cor:lip}
Let $\eps > 0$. For all sufficiently large $n$, for all $s$ and $t$ in $\R_+^3$, \beqs |\f_n(s) - \f_n(t)| < \sqrt{2}(2e + \eps)|s - t|.\eeqs
\end{cor}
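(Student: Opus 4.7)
Plan: Combine the concavity of $\f_n$ with the uniform upper bound $\f_n(s) \leq 2e + \eps$ supplied by Lemma~\ref{lem:2.8} to control the Euclidean norm of every supergradient of $\f_n$, and then invoke the standard Lipschitz estimate for concave functions. The key additional observation is that $\f_n$ is not only concave but also positively homogeneous of degree one and coordinatewise monotone, which together force tight structure on the supergradients.

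First I record two further structural properties. Homogeneity $\f_n(\alpha s) = \alpha \f_n(s)$ is immediate from $P_n(\alpha s) = \alpha P_n(s)$ (the defining inequalities $\nabla^2 g \preccurlyeq s$ are linear in $(g, s)$), and monotonicity $\f_n(s) \leq \f_n(s')$ for $s \leq s'$ componentwise follows from $P_n(s) \subseteq P_n(s')$. Next, for any interior $s \in \R_+^3$ and any supergradient $w \in \partial \f_n(s)$, applying the concavity inequality $\f_n(y) \leq \f_n(s) + w \cdot (y - s)$ with $y = 0$ (using $\f_n(0) = 0$) gives $w \cdot s \leq \f_n(s)$, and with $y = 2s$ (using $\f_n(2s) = 2 \f_n(s)$) gives $w \cdot s \geq \f_n(s)$. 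Hence Euler's identity $w \cdot s = \f_n(s)$ holds, and monotonicity forces $w_i \geq 0$. Under the constraints $w_i \geq 0$ and $\sum_i w_i s_i = \f_n(s)$ with $s_i > 0$, the maximum of $\sum_i w_i^2$ is attained at an extreme point supported on a single coordinate, yielding $|w|_2 \leq \f_n(s)/\min_i s_i$.

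The dihedral symmetry of the torus $\T_n$ permutes the three edge classes $E_0, E_1, E_2$ as the full symmetric group $S_3$ (rotation by $2\pi/3$ induces a $3$-cycle and complex conjugation $z \mapsto \bar z$ transposes $E_1 \leftrightarrow E_2$ while fixing $E_0$, both descending to $\T_n$ since they preserve $n\L$), so $\f_n$ is symmetric in $(s_0, s_1, s_2)$. Permuting coordinates to place $\min_i s_i$ in slot $0$ and then rescaling by homogeneity so that this coordinate equals $2$, Lemma~\ref{lem:2.8} combined with the elementary relation $|P_n(s)|^{1/(n^2-1)} = |P_n(s)|^{1/n^2}(1 + o(1))$ gives $\f_n(s) \leq 2e + \eps/2$ for all sufficiently large $n$. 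Homogeneity then extends this to $\f_n(s) \leq (2e + \eps/2)(\min_i s_i)/2$ for general $s \in \R_+^3$, and plugging into the supergradient estimate yields $|w|_2 \leq (2e + \eps/2)/2 < \sqrt{2}(2e + \eps)$.

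Since any segment $[s, t]$ lies in the convex cone $\R_+^3$, the standard Lipschitz estimate $|\f_n(s) - \f_n(t)| \leq \bigl(\sup_{u \in [s, t]} |w(u)|_2\bigr) |s - t|$ for concave functions (obtained by integrating one-sided directional derivatives along the segment) immediately yields the stated bound. The main technical subtlety is that the supergradient is only defined at points of subdifferentiability of $\f_n$; boundary points of $\R_+^3$ therefore require an interior approximation argument combined with continuity of the concave function $\f_n$ on its closure, which is the step most likely to need care, though it is routine.
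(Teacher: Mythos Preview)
Your proof is correct and takes a genuinely different route from the paper's. The paper introduces the coordinatewise minimum $u=(\min(s_0,t_0),\min(s_1,t_1),\min(s_2,t_2))$, uses monotonicity to bound $|\f_n(s)-\f_n(t)|$ by $\max\bigl(\f_n(s)-\f_n(u),\,\f_n(t)-\f_n(u)\bigr)$, observes that $s-u$ and $t-u$ are orthogonal so that $\max(|s-u|,|t-u|)\le|s-t|$ (the paper records the weaker $\sqrt 2\,|s-t|$, which is the origin of the $\sqrt 2$ in the statement), and then controls each one-sided increment via concavity along the line through $u$ together with the vanishing of $\f_n$ on $\partial\R_+^3$. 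You instead bound the supergradient norm directly: Euler's relation $w\cdot s=\f_n(s)$ together with $w_i\ge 0$ confines $w$ to a simplex whose $\ell_2$-extremal vertex has norm $\f_n(s)/\min_i s_i$, and the symmetry--homogeneity extension of Lemma~\ref{lem:2.8} makes this at most $e+\eps/4$ uniformly in $s$. Your approach is cleaner and in fact delivers a Lipschitz constant about $2\sqrt 2$ times smaller than the one stated. Your closing concern about integration and boundary approximation is unnecessary: on the open orthant the segment $[s,t]$ lies entirely in the interior, and the pair of supergradient inequalities at $s$ and at $t$ already gives $|\f_n(s)-\f_n(t)|\le(\sup_w|w|_2)\,|s-t|$ without any limiting argument.
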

\begin{proof}
For $u$ such that $s - u \in \R^3_+$, we know that \beqs |\f_n(s) -\f_n(u)| <|\f_n(s - u)| \leq (2e + \eps)|s - t|.\eeqs
Consider the line through $s$ and $t$. We introduce $u = (\min(s_0, t_0), \min(s_1, t_1), \min(s_2, t_2))$, and note that 
$$|\f_n(s) - \f_n(u) - (\f_n(t )- \f_n(u))|< \max(\f_n(s) - \f_n(u), \f_n(t )- \f_n(u))$$ because $s - u$ and $t - u$ belong to $\R_+^3$. Noting that $(s - u)\cdot (t - u) \geq 0$, we have $$\max(|s -u|, |t - u|) \leq \sqrt{2} |s - t|.$$ The corollary follows by the concavity of $f_n$  on the intersection of this line with $\R_+^3$, the fact that $\f_n$ tends to $0$ on the boundary of $\R_+^3$, Lemma~\ref{lem:3} and Lemma~\ref{lem:2.8} .
\end{proof}
\begin{cor}
The pointwise limit of the functions $\mathbf{f}_n$ is a function $\mathbf{f}$ that is  $2\sqrt{2}e$ Lipschitz and concave.
\end{cor}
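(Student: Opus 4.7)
The plan is to establish the three claimed properties of the limit function $\f$ separately, passing to a limit in an analogous property that has already been proven for each $\f_n$. The ingredients are: (i) pointwise convergence of $|P_n(s)|^{1/n^2}$ from Lemma~\ref{lem:2.8}, (ii) concavity of each $\f_n$ as a consequence of Brunn--Minkowski, and (iii) the uniform Lipschitz estimate of Corollary~\ref{cor:lip}. The corollary is essentially a packaging step.

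First I would show pointwise convergence. Lemma~\ref{lem:2.8} gives that $|P_n(s)|^{1/n^2}$ converges for each fixed $s$ to a limit in $[1,2e]$. I bridge from this exponent to $1/(n^2-1)$ by writing
\[ \f_n(s) \;=\; |P_n(s)|^{\frac{1}{n^2-1}} \;=\; |P_n(s)|^{\frac{1}{n^2}} \cdot |P_n(s)|^{\frac{1}{n^2(n^2-1)}}. \]
By Lemma~\ref{lem:6}, $|P_n(s)| \leq e^{C n^2}$, so the correction factor lies in $[1, e^{C/(n^2-1)}]$ and tends to $1$. Hence $\f_n(s) \to \f(s)$, where $\f(s)$ denotes the limit identified in Lemma~\ref{lem:2.8}, and a fortiori $\f(s) \in [1, 2e]$.

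Second, for concavity I observe that the assignment $s \mapsto P_n(s)$ is Minkowski-linear in $s$, in the sense that $P_n(\lambda s + (1-\lambda) t) \supseteq \lambda P_n(s) + (1-\lambda) P_n(t)$: the defining constraints $\nabla^2(g)(e) \leq s_i$ are linear in $s$ and preserved under convex combination, since a convex combination of feasible functions remains feasible. By the Brunn--Minkowski inequality applied in the ambient $(n^2-1)$-dimensional ambient space, $\f_n = |P_n(\cdot)|^{1/(n^2-1)}$ is concave on $\R_+^3$. Since a pointwise limit of concave functions is concave, $\f$ inherits concavity.

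Third, for the Lipschitz bound I directly invoke Corollary~\ref{cor:lip}: for any $\eps > 0$ and all sufficiently large $n$, $|\f_n(s) - \f_n(t)| < \sqrt{2}(2e+\eps)|s-t|$. Passing to the pointwise limit gives $|\f(s) - \f(t)| \leq \sqrt{2}(2e+\eps)|s-t|$, and since $\eps > 0$ was arbitrary I conclude $|\f(s) - \f(t)| \leq 2\sqrt{2}e \,|s-t|$. There is no serious obstacle here; the only mild subtlety is noting that $\f_n$ and $|P_n|^{1/n^2}$ share the same limit, which the upper bound from Lemma~\ref{lem:6} makes immediate.
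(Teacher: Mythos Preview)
Your proposal is correct and follows essentially the same approach as the paper's proof, which is a one-line appeal to Corollary~\ref{cor:lip} together with the pointwise convergence of the $\f_n$. You supply more detail than the paper does: the explicit bridging between the exponents $1/n^2$ and $1/(n^2-1)$, and the Brunn--Minkowski argument for concavity of each $\f_n$ (which the paper states just before Corollary~\ref{cor:lip} without proof), but these are exactly the implicit ingredients.
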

\begin{proof}
This follows from Corollary~\ref{cor:lip} and the pointwise convergence of the $\f_n$ to  $\f$.
\end{proof}

Recall that in the course of proving Lemma~\ref{lem:2.8}, the following was proved.
\begin{claim}\lab{cl:2.4}
Let $C <  n_1^2 < n_2$. Then, \beqs \left(1 + \frac{C\log n_1}{n_1}\right) |P_{n_2}(s)|^{\frac{1}{n_2^2 - 1}} \geq |P_{n_1}(s)|^{\frac{1}{n_1^2 - 1}}. \eeqs 
\end{claim}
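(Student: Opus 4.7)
The plan is to observe that this claim is essentially a rewriting of the intermediate inequality already established in the body of the proof of Lemma~\ref{lem:2.8}, up to the harmless change of exponent from $1/n^2$ to $1/(n^2-1)$.

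First, I would isolate from the chain passing through Lemma~\ref{lem:3} and Lemma~\ref{lem:4} the following fact: under the hypothesis $C < n_1^2 < n_2$, setting $n_3 := (\lfloor n_2/n_1\rfloor + 1)\, n_1$ gives $n_3 - n_2 \leq n_1$ and $\ln n_3/n_3 \leq C\ln n_1/n_1^2$, so the chain collapses to
\[ |P_{n_2}(s)|^{1/n_2^2} \geq |P_{n_1}(s)|^{1/n_1^2}\left(1-\frac{C\log n_1}{n_1}\right). \]
This is precisely the intermediate display that appears in the proof of Lemma~\ref{lem:2.8}, and it is exactly the desired claim with $1/n^2$ in place of $1/(n^2-1)$.

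To convert the exponents, I would exploit the identity $\tfrac{1}{n^2-1} = \tfrac{1}{n^2} + \tfrac{1}{n^2(n^2-1)}$, which gives
\[ |P_n(s)|^{1/(n^2-1)} = |P_n(s)|^{1/n^2}\cdot |P_n(s)|^{1/(n^2(n^2-1))}. \]
Lemma~\ref{lem:6} supplies $|P_n(s)| \leq \exp(Cn^2)$, so the extra factor on the right is at most $\exp(C/(n^2-1)) \leq 1 + C/n^2$; while Lemma~\ref{lem:3} (combined with the $\tP \leftrightarrow P$ comparison preceding Lemma~\ref{lem:2.3}) gives $|P_n(s)| \geq 1$ up to a lower order multiplicative correction, so the extra factor is at least $1$ up to the same lower order correction. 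Hence $|P_{n_2}(s)|^{1/(n_2^2-1)} \geq |P_{n_2}(s)|^{1/n_2^2}$ and $|P_{n_1}(s)|^{1/n_1^2} \geq (1 + C/n_1^2)^{-1}|P_{n_1}(s)|^{1/(n_1^2-1)}$.

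Combining the three displays and absorbing the $O(1/n_1^2)$ factor into the $O(\log n_1/n_1)$ factor yields
\[ |P_{n_2}(s)|^{1/(n_2^2-1)} \geq |P_{n_1}(s)|^{1/(n_1^2-1)}\left(1+\frac{C\log n_1}{n_1}\right)^{-1}, \]
which, after clearing the denominator on the right, is exactly the claim. The only step requiring real care is the translation between $\tP$-bounds (where Lemma~\ref{lem:3} naturally lives) and $P$-bounds, but the elementary comparison $|\tP_n(s)|^{1/n^2}(1\pm C\log n/n)$ noted in Section~\ref{sec:prelim} absorbs this; no new ingredient beyond the body of Lemma~\ref{lem:2.8} is needed.
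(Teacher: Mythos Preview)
Your proposal is correct and takes essentially the same approach as the paper: the paper simply states ``Recall that in the course of proving Lemma~\ref{lem:2.8}, the following was proved'' and leaves it at that, while you have explicitly traced the intermediate inequality back to its source and additionally justified the change of exponent from $1/n^2$ to $1/(n^2-1)$, which the paper absorbs silently into the constant $C$.
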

In light of Lemma~\ref{lem:2.8}, this has the following corollary.
\begin{cor}\lab{cor:2.5}
 \beqs \mathbf{f}_n(s) \leq \left(1 + \frac{C\log n}{n}\right) \mathbf{f}(s) . \eeqs 
\end{cor}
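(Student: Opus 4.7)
The plan is to obtain Corollary~\ref{cor:2.5} as an essentially immediate consequence of Claim~\ref{cl:2.4} by letting the larger index go to infinity. Fix $s$ and a positive integer $n$ with $n^2 > C$, where $C$ is the universal constant from Claim~\ref{cl:2.4}. Applying Claim~\ref{cl:2.4} with $n_1 = n$ and $n_2 = N$ for any integer $N$ with $N > n^2$, we obtain
\begin{equation*}
\left(1 + \frac{C \log n}{n}\right) |P_{N}(s)|^{\frac{1}{N^2 - 1}} \geq |P_{n}(s)|^{\frac{1}{n^2 - 1}} = \mathbf{f}_n(s).
\end{equation*}

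The remaining point is to pass to the limit $N \to \infty$ on the left-hand side. By Lemma~\ref{lem:2.8}, the sequence $|P_N(s)|^{1/N^2}$ converges to a finite limit $\mathbf{f}(s) \in [1, 2e]$. Since
\begin{equation*}
|P_{N}(s)|^{\frac{1}{N^2 - 1}} = |P_{N}(s)|^{\frac{1}{N^2}} \cdot |P_{N}(s)|^{\frac{1}{N^2(N^2 - 1)}},
\end{equation*}
and since $1 \leq |P_N(s)|^{1/N^2} \leq 2e$ (again by Lemma~\ref{lem:6} and the lower bound from Lemma~\ref{lem:3}), the second factor lies in $[1, (2e)^{1/(N^2 - 1)}]$ and therefore tends to $1$. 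Hence $|P_N(s)|^{1/(N^2 - 1)} \to \mathbf{f}(s)$ as $N \to \infty$.

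Taking $N \to \infty$ in the inequality above then gives
\begin{equation*}
\left(1 + \frac{C \log n}{n}\right) \mathbf{f}(s) \geq \mathbf{f}_n(s),
\end{equation*}
which is the desired conclusion. There is no real obstacle here; the only small bookkeeping point is the switch between the exponents $1/n^2$ and $1/(n^2 - 1)$, which is absorbed into the $O((\log n)/n)$ term using the uniform upper bound of $2e$ on $|P_N(s)|^{1/N^2}$. The content of the corollary is thus contained in Claim~\ref{cl:2.4} together with the convergence established in Lemma~\ref{lem:2.8}.
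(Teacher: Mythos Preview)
Your proof is correct and follows exactly the approach the paper intends: take $n_2 \to \infty$ in Claim~\ref{cl:2.4} and invoke the convergence from Lemma~\ref{lem:2.8}. The paper itself gives no further detail beyond ``In light of Lemma~\ref{lem:2.8}, this has the following corollary,'' so your write-up is just a careful fleshing-out of that remark, including the harmless bookkeeping between the exponents $1/N^2$ and $1/(N^2-1)$.
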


We will need the following claim in addition to Claim~\ref{cl:2.4}.

\begin{claim}\lab{cl:3.1}
Let $ n_1 \leq C' (\sqrt{n})$. Then, \beqs \left(1 - \frac{C\log n_1}{n_1}\right) |P_{n}(s)|^{\frac{1}{n^2 - 1}} \leq |P_{n_1}(s)|^{\frac{1}{n_1^2 - 1}}. \eeqs 
\end{claim}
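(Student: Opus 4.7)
The proof should mirror the structure of the argument yielding Claim~\ref{cl:2.4}, but deliver an inequality in the opposite direction. First, bridge $n$ to a nearby multiple of $n_1$: set $n_2 := n_1\lfloor n/n_1\rfloor$ so that $n_1\mid n_2$ and $0\le n-n_2<n_1$. The hypothesis $n_1\le C'\sqrt n$ then gives $(n-n_2)\log n/n\le C\log n_1/n_1$, and Lemma~\ref{lem:4} applied to the pair $(n_2,n)$ yields
\[
|P_{n_2}(s)|^{1/n_2^2}\ge |P_n(s)|^{1/n^2}\Bigl(1-\frac{C\log n_1}{n_1}\Bigr).
\]
Converting between the $1/n^2$ and $1/(n^2-1)$ normalizations costs only a factor $(1+C/n^2)$ (absorbed into the slack via the uniform upper bound $|P_n|^{1/n^2}\le 2e$ from Lemma~\ref{lem:2.8}), so the same inequality holds with the $1/(n_2^2-1)$ and $1/(n^2-1)$ exponents.

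The main remaining step is to prove a reverse companion to Lemma~\ref{lem:3}: when $n_1\mid n_2$,
\[
|P_{n_1}(s)|^{1/(n_1^2-1)}\ge |P_{n_2}(s)|^{1/(n_2^2-1)}\Bigl(1-\frac{C\log n_1}{n_1}\Bigr).
\]
The natural plan is to study the averaging projection $\Psi\colon P_{n_2}(s)\to P_{n_1}(s)$ given by $(\Psi h)(v):=(n_1/n_2)^2\sum_\tau h(v+\tau)$ with $\tau$ running over a transversal of $n_1\Z^2/n_2\Z^2$. Writing $|P_{n_2}(s)|=J\int_{P_{n_1}(s)}F(g)\,dg$ with an explicit Jacobian $J$ depending only on $n_1$ and $n_2$, I would bound the fiber volume $F(g)$ uniformly by choosing a double-layer boundary around each $n_1\times n_1$ cell of $\T_{n_2}$ and conditioning on its values, exactly as in the proof of Lemma~\ref{lem:3}. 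This decouples the fiber into $(n_2/n_1)^2$ nearly independent pieces, each contributing at most $|P_{n_1}(s)|$ times a polynomial boundary-effect factor $(Cn_1^2)^{Cn_1}$; after cumulating across all cells and extracting the $(n_2^2-1)$-th root, the boundary losses amount to $(1+C\log n_1/n_1)$, yielding the claim.

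The main obstacle is making the decoupling quantitatively sharp. A direct Fradelizi-based lower bound on the central fiber using only Theorems~\ref{thm:prekopa} and~\ref{thm:frad} would give a constant-factor loss $\exp(-(n_2^2-n_1^2)/(n_1^2-1))$, which is exponentially weak when $n_2\gg n_1$. The essential point is that the double-layer boundary conditioning produces a boundary cost that is only polynomial in $n_1$ per $n_1\times n_1$ cell, and after cumulating over all $(n_2/n_1)^2$ cells and extracting the appropriate root, this cost becomes $\exp(C\log n_1/n_1)$. Combining this reverse-Lemma~\ref{lem:3} inequality with the bridging step via Lemma~\ref{lem:4} completes the proof.
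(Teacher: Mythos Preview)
Your bridging step from $n$ to the nearby multiple $n_2$ via Lemma~\ref{lem:4} is correct and matches the paper. The difficulty is in your ``reverse Lemma~\ref{lem:3}'' step, where the averaging projection $\Psi$ creates a problem you do not actually resolve.

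The fibers of $\Psi$ do \emph{not} decouple after conditioning on the double-layer boundary. The constraint $\Psi h=g$ is a sum over all $(n_2/n_1)^2$ translates, so it ties the cells together globally even once the boundary grid is pinned; your sentence ``this decouples the fiber into $(n_2/n_1)^2$ nearly independent pieces'' elides exactly this coupling. Nor can you simply drop the constraint and bound each cell fiber by $|P_{n_1}(s)|$ for \emph{arbitrary} boundary data: a cell with given Dirichlet data on a square is not obviously dominated by the periodic polytope $P_{n_1}(s)$.

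The paper sidesteps both issues by discarding $\Psi$ and projecting $P_n(s)$ directly onto $\R^{\b}$, then applying Fradelizi's theorem \emph{there}. Because $|\b|=O(n^2/n_1)$ under the hypothesis $n_1\le C'\sqrt{n}$, the Fradelizi loss $e^{|\b|}$ becomes only $(1+O(1/n_1))$ after extracting the $n^2$-th root---so your dismissal of Fradelizi was based on applying it to the wrong (much higher-dimensional) projection. The point of Fradelizi here is that it lets one compare the generic fiber with the single fiber over boundary values $\approx 0$; at that specific slice the double-layer conditioning makes the wrap-around constraints of $P_{n_1}(s)$ nearly satisfied in each cell, and the paper argues that at least half of that fiber's mass maps, cell by cell after mean subtraction, into $\prod_{i,j}P_{n_1}(s)$. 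This yields $|P_{n_1}(s)|^{(n_2/n_1)^2}\gtrsim e^{-|\b|}(Cn_1)^{-O((n_2/n_1)^2)}|P_n(s)|$, which after taking roots gives the claim.
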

\begin{proof}
Let $o = 0$, and $\square_{ij}^o$ be given by (\ref{eq:sqij}), where $n_2$ is the largest multiple of $n_1$ that is less or equal to $n$.
Since the push forward of a log concave density via a surjective linear map is a log-concave density by \cite{prekopa}, we see that the push forward of the uniform measure on $P_n(s)$ onto $\R^\b$ via the natural projection $\pi$ of  $\R^{V(\T_n)}$ onto $\R^\b$ is a log-concave measure. Taking into account that the subspace of mean zero functions maps surjectively onto $\R^\b$, we see that this measure is in fact absolutely continuous with respect to the Lebesgue measure and is thus a density, which we denote by $\rho$. Let $\rho$ be convolved with the indicator of an origin symmetric cube $Q$ of sidelength $\frac{1}{2M} = n^{-6}$, and let the resulting density be denoted by $\rho'$. Since the convolution of two log-concave densities of log-concave, we see that $\rho'$ is a log-concave density. However, $\rho'(x)$ is the measure that $\rho$ assigns to $Q+x$.
By Fradelizi's theorem, the value of log-concave density $\rho'$ on $\R^{\b}$ at its mean $0$ is no less than $e^{-|\b|}$ times the density at a mode. Thus, for every $x \in \R^{\b}$, we have $\rho'(x) \leq e^{|\b|} \rho'(0).$ Let $z$ be a point sampled from $Q$ from the measure obtained by restricting $\rho$ to $Q$ and normalizing it to a probability distribution $\rho_Q$. Consider the polytope $\pi^{-1}(z) \cap P_n(s)$ equipped with the conditional density, which is simply the uniform measure on $\pi^{-1}(z) \cap P_n(s)$ . Let us sample a point $z'$ from the uniform measure on $\pi^{-1}(z) \cap P_n(s)$. We claim that with probability at least $\frac{1}{2}$, for each $1 \leq i, j \leq n_2^2/n_1^2$,  $z'|_{\square_{ij}^o}$ corresponds to a point in $P_{n_1}(s)$, via the natural identification of $\square_{ij}^o$ with $\T_{n_1}$, after subtracting the mean.  At least $\frac{1}{2}$ the mass of $ker(\pi) \cap P_n(s)$ lies inside $(1 - M^{-1})\left(ker(\pi) \cap P_n(s)\right)$, and the distance of any point in  $(1 - M^{-1})\left(ker(\pi) \cap P_n(s)\right)$ to the boundary of $ker(\pi) \cap P_n(s)$ is at least $M^{-1}$. This follows from the convexity of $\pi^{-1}(z) \cap P_n(s)$ and the fact that $ker(\pi) \cap P_n(s)$ contains the unit ball in $ker(\pi)$ centered at the origin.

By our claim, $$M^{-|\b|} |P_{n_1}(s)|^{(n_2/n_1)^2} \geq \frac{e^{- |\b|}}{2} (Cn_1)^{- \frac{2n_2^2}{n_1^2}}|P_n(s)|.$$ This yields 
$$n^{- C n_1/n}|P_{n_1}(s)|^{(1/n_1)^2} \geq \frac{e^{- |\b|/n_2^2}}{2} (Cn_1)^{- \frac{2}{n_1^2}}|P_n(s)|^{(1/n_2^2)},$$ and since $|\b| = \Theta(n_2 n/n_1 + nn_1) = \Theta(n_1^3)$, the lemma follows.
\end{proof}
Consequently, taking limits on the left, and incorporating Corollary~\ref{cor:2.5} we have the following corollary.
\begin{cor}\lab{cor:...}
\beqs \left(1 - \frac{C\log n}{n}\right)  \mathbf{f}(s) \leq \mathbf{f}_n(s) \leq  \left(1 + \frac{C\log n}{n}\right)  \mathbf{f}(s). \eeqs
\end{cor}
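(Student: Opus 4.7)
The plan is to establish the two inequalities separately, using the two claims just proved. The upper bound $\mathbf{f}_n(s) \le (1+C\log n/n)\mathbf{f}(s)$ is already the content of Corollary~\ref{cor:2.5}, so nothing more needs to be done there. The work is entirely in the lower bound.

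For the lower bound I would start from Claim~\ref{cl:3.1}, which asserts that whenever $n_1 \le C'\sqrt{n}$,
$$\left(1 - \frac{C\log n_1}{n_1}\right)\mathbf{f}_n(s) \le \mathbf{f}_{n_1}(s).$$
Fix $n_1$ and let $n \to \infty$. For all sufficiently large $n$ the side-condition $n_1 \le C'\sqrt{n}$ is automatic, so the inequality applies. By Lemma~\ref{lem:2.8}, $\mathbf{f}_n(s) \to \mathbf{f}(s)$, and since the factor $(1-C\log n_1/n_1)$ on the left does not depend on $n$, passing to the limit on the left yields
$$\left(1-\frac{C\log n_1}{n_1}\right)\mathbf{f}(s) \le \mathbf{f}_{n_1}(s).$$
Renaming $n_1$ as $n$ gives exactly the desired lower bound $\mathbf{f}_n(s)\ge (1-C\log n/n)\mathbf{f}(s)$, valid for all $n$ large enough that $C\log n/n<1$ (for small $n$ the inequality is trivial after possibly enlarging $C$, since $\mathbf{f}_n(s)\ge 1$ by Lemma~\ref{lem:3} while $\mathbf{f}(s)\le 2e$ by Lemma~\ref{lem:2.8}).

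There is essentially no obstacle here — the two claims are perfectly tuned to give the matching upper and lower envelopes around $\mathbf{f}(s)$, and the only thing to verify is that the side-condition $n_1 \le C'\sqrt{n}$ in Claim~\ref{cl:3.1} does not obstruct taking $n \to \infty$ with $n_1$ fixed, which it plainly does not. If anything, the delicate point to be careful about is just the symmetry of notation: Claim~\ref{cl:3.1} compares $\mathbf{f}_n$ to $\mathbf{f}_{n_1}$ for the \emph{smaller} index $n_1$, so one must send $n\to\infty$ (not $n_1\to\infty$) to convert it into a statement about $\mathbf{f}(s)$ versus $\mathbf{f}_{n_1}(s)$, which then becomes the statement about $\mathbf{f}_n(s)$ after relabelling.
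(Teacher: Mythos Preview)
Your proposal is correct and follows essentially the same approach as the paper: the paper's one-line justification is ``taking limits on the left [in Claim~\ref{cl:3.1}], and incorporating Corollary~\ref{cor:2.5}'', which is precisely what you do, with the added care of handling small $n$ by absorbing into the constant.
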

%\section{Anisotropic isoperimetric inequality}
\subsection{Surface area of facets of $P_n(s)$}
%The following Lemma was proved in Part I, \cite{PartI}.
\begin{lem} \lab{lem:ess} There is a universal constant $C > 1$ such that 
for all sufficiently large $n$, the surface area of a codimension $1$ facet of $P_n(s)$ corresponding to $E_i(\T_n)$ is bounded below by $\left(\frac{s_0}{Cs_2}\right)^{\frac{Cs_i}{s_0}} |P_n(s)|^{1 - \frac{1}{n^2 - 1}}.$
\end{lem}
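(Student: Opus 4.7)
}

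The plan is to first translate the claim into a lower bound on a partial derivative of $\f_n(s) = |P_n(s)|^{1/(n^2-1)}$, and then to establish that lower bound by a Minkowski-comparison argument. By the torus translation symmetry, all $n^2$ facets of $P_n(s)$ associated with edges in $E_i(\T_n)$ have the same $(n^2-2)$-dimensional volume $V_i$. The standard formula for the rate of change of volume under parallel translation of the $n^2$ facets of a common type, together with $\|\nabla \nabla^2 g(e_0)\| = 2$, gives
$$
\frac{d}{dt} |P_n(s + t e_i)|\Big|_{t = 0^+} \;=\; \frac{n^2 V_i}{2}.
$$
On the other hand, by Brunn--Minkowski combined with the identity $P_n(\lambda s) = \lambda P_n(s)$, the function $\f_n$ is concave and positively homogeneous of degree one, so the same derivative equals $(n^2-1)\, \f_n(s)^{n^2-2}\, \partial_i \f_n(s)$. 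Equating the two expressions and using $|P_n(s)|^{1 - 1/(n^2-1)} = \f_n(s)^{n^2-2}$, the lemma reduces to the pointwise lower bound
$$
\partial_i \f_n(s) \;\geq\; c \left( \frac{s_0}{C s_2} \right)^{C s_i / s_0}
$$
for a universal $c > 0$.

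To prove this lower bound, I would construct an explicit auxiliary function $q \in \R^{V(\T_n)}$ with $\sum_v q(v) = 0$ whose discrete Hessian is concentrated on $E_i(\T_n)$: specifically, $q$ is the (unique, up to additive constants) mean-zero solution of the discrete Poisson-type equation on $\T_n$ with $\nabla^2 q(e) = 1$ for $e \in E_i(\T_n)$ and $\nabla^2 q(e)$ equal to the compensating constant $-\tfrac{1}{2}$ for $e \notin E_i(\T_n)$ (needed for solvability, since each $\sum_{e \in E_j} \nabla^2 g(e) = 0$). Then for $g \in P_n(s)$ and small $t > 0$, $g + tq$ lies in a slightly dilated polytope $P_n(s + t \tilde e_i)$ with $\tilde e_i$ comparable to $e_i$, and the Brunn--Minkowski inequality applied to the Minkowski sum $P_n(s) + t[\{0\}, q]$ yields an explicit lower bound on $\partial_i \f_n(s)$ in terms of the $\ell^\infty$-norm of $q$ and the diameter of $P_n(s)$.

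The main obstacle is obtaining the specific exponent $C s_i / s_0$ rather than the merely polynomial factor a one-shot Brunn--Minkowski would give. I expect this comes from iterating the above comparison: the facet constraint $\nabla^2 g(e_0) = s_i$ is approached by tightening in $\lceil C s_i / s_0 \rceil$ steps of size $s_0$, at each step losing a quantitative factor of $s_0 / (C s_2)$ from the residual Brunn--Minkowski comparison; the iteration terminates when all the $i$-directional slack is exhausted, which both produces the stated exponent and explains why the bound degrades as $s_2 / s_0$ grows. A clean bookkeeping of these steps, using the vanishing $\f_n(s - s_i e_i) = 0$ (from $P_n(s_0, s_1, 0)$ being $(n^2-2)$-degenerate) together with concavity of $\f_n$ along the ray $s - t e_i$, should yield $\partial_i \f_n(s) \geq (s_0/(C s_2))^{C s_i / s_0}$ and hence the claimed bound on $V_i$.
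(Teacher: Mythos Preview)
Your reduction to a lower bound on $\partial_i \f_n(s)$ is correct and is implicit in the paper's use of $w_i^{(n)}$. However, the two mechanisms you propose for actually proving that lower bound both break down.

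First, the auxiliary function $q$ you describe cannot exist. For any $g:V(\T_n)\to\R$ and each $j\in\{0,1,2\}$ one has $\sum_{e\in E_j(\T_n)}\nabla^2 g(e)=0$, since every vertex occurs with each sign exactly once in that sum. Your proposed $q$ has $\nabla^2 q\equiv 1$ on $E_i(\T_n)$, forcing $\sum_{e\in E_i}\nabla^2 q(e)=n^2\neq 0$; the ``compensating'' value $-\tfrac12$ on the other two classes does not touch this obstruction (and in fact violates the analogous constraint for $E_j$, $j\neq i$). So there is no global $q$ with Hessian concentrated on a single edge class; any workable perturbation must be local, concentrated near one specific rhombus $e_0$.

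Second, and more seriously, concavity of $\f_n$ along the ray $t\mapsto s-te_i$ together with $\f_n(s-s_ie_i)=0$ yields $0\le \f_n(s)-s_i\,\partial_i\f_n(s)$, i.e.\ the \emph{upper} bound $\partial_i\f_n(s)\le \f_n(s)/s_i$; this is exactly Lemma~\ref{lem:surf_upperbd}, not the lower bound you need. Concavity cannot by itself bound $\partial_i\f_n(s)$ from below, and the ``iteration in $\lceil Cs_i/s_0\rceil$ steps'' remains a heuristic without a mechanism producing the factor $s_0/(Cs_2)$ per step.

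The paper's proof is entirely different and explains the exponent structurally. It fixes one facet $F_i$ (say $\nabla^2 x(e_0)=s_i$) and estimates, for $x$ uniform on $P_n(s)$, the probability that $\mathrm{dist}(x,F_i)<\eps$. It isolates a diagonal segment $T$ of $\sim s_i/s_0$ vertices through $e_0$ surrounded by a thin double-layer strip $S$ with $|S|\sim s_i/s_0$. Conditioning on $x|_S$ being nearly affine (event $\Lambda_S$) decouples $x|_T$, whose conditional law is uniform on a truncated cube of side $\sim s_0$; requiring $x|_T$ to be near a specific ``tent'' profile (event $\Lambda_T$) forces $\nabla^2 x(e_0)$ close to $s_i$. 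The length $\sim s_i/s_0$ arises because a tent that saturates a single $E_i$-constraint of size $s_i$, built from second differences bounded by $s_0$, needs that many steps. The probabilities $\p[\Lambda_T\mid\Lambda_S]\ge 400^{-|T|}$ and $\p[\Lambda_S]\ge (Cs_2/s_0)^{-|S|}$ are obtained via Pr\'ekopa log-concavity of the marginal on $\R^S$, Fradelizi's bound on the density at the mean, and the entropy bound of Lemma~\ref{lem:5}; multiplying them gives $(s_0/(Cs_2))^{Cs_i/s_0}$. This local probabilistic construction is the missing idea.
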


\begin{proof}
 Let $s$ be rescaled by scalar multiplication so that $|P_n(s)| = 1$. Knowing that $|P_n(s)|^{\frac{1}{n^2-1}}$ exists and and has a limit and lies in $[s_0, 2e s_0]$ , we see that $|P_n(s)|^{1 - \frac{1}{n^2 - 1}} \in [\frac{1}{2es_0}, \frac{1}{s_0}].$ Let $F_i$ denote a codimension $1$ facet corresponding to an edge in $E_i(\T_n)$. 
For all sufficiently small $\eps > 0$, we will find a lower bound on the probability that there exists a point $y \in F_i$ such that $\|y - x\|_{\ell_2} < \eps$, when $x$ is sampled at random from $P_n(s)$.
We identify $V(\T_n)$ with  $\Z/n\Z \times \Z/n\Z$ via the unique $\Z$ module isomorphism that maps $[\omega^i]$ to $(1, 0)$ and $[\omega^i \exp(\frac{\pi\imath}{3})]$ to $(0, 1)$. This causes the edges obtained by translating  $\{(0,0), (1, 0), (1, 1), (0,1)\}$ to belong to $E_i(\T_n)$. We further identify $\Z/n\Z \times \Z/n\Z$  with the subset of $\Z^2$ having coordinates in $(-\frac{n}{2}, \frac{n}{2}]$.  Let $T$ be the set of vertices contained in the line segment $\{(a, b)|(a = b) \,\mathrm{and}\,  (|a| \leq \frac{3s_i}{s_0})\}.$ Let $S$ be the set of all  lattice points (vertices) within the convex set $\{(a, b)|(|a -b| \leq 3) \,\mathrm{and} \, (|a + b| \leq \frac{6s_i}{s_0} + 3)\}$ that do not belong to $T$. Without loss of generality, we assume that $F_i$ corresponds to the constraint  $- x(0,0) + x(1, 0) -  x(1, 1) +  x(0,1) \leq s_i$. Let $conv(X)$ be used to denote the convex hull of $X$ for a set of bounded diameter. Let $U = \{u_{-2}, u_{-1}, u_0\}$ be a set of three adjacent vertices not contained in $S\cup T$, but such that exactly two of these vertices are respectively adjacent to  two distinct vertices in $S$. That such a $U$ exists follows from the presence of long line segments in the boundary of $conv(S\cup T)$. Given $x \in P_n(s)$, we define $x_{lin}:conv(U\cup S\cup T) \ra \R$ to be the unique  affine map from the convex hull of $U\cup S\cup T$ to $\R$ which agrees with the values of $x$ on $U$. The function $x_{lin}$ will serve as a baseline for the measurement of fluctuations.  Let $\Lambda_T$ denote the event that 
$\forall (a, a) \in T,$ 

$$ \left|x((a, a)) - x_{lin}((a, a)) - \min\left( \frac{  \left(|a-\frac{1}{2}| - \frac{1}{2}\right)s_0 -  2 s_i}{2}, 0\right)\right|  \leq  \frac{s_0}{20}.$$ 
Let $\Lambda_S$ be the event that for each vertex $v \in S,$ we have   $$-\frac{s_0}{100} \leq x(v)  - x_{lin}(v)  \leq  \frac{s_0}{100}.$$  Let $x_S$ denote the restriction of $x$ to $S$, and likewise define $x_T$, $x_{S\cup T}$ etc.  Let the  cube in $\R^S$ corresponding  to the event $\Lambda_S$ be denoted $Q_S$. Let the polytope in  $\R^T$ corresponding to the event $\Lambda_T$ be denoted $Q_T$. Note that $Q_T$ implicitly depends on $x_S$, but only through the effect of the one constraint $F_i$.
%and $\forall (a, a) \in T,$  $$\max(|x(a, a) - x(a+ 1, a+1)|, |x((a, a)) - x((a-1, a-1))| \leq \frac{2s_0}{3}.$$
Let $z_S$ be a point in   $[-\frac{s_0}{100}, \frac{s_0}{100}]^S.$ Due to a double layer of separation between $T$ and $V(\T_n)\setminus S$, conditioned on $x_S$ being equal to  $z_S$, the distribution of $x_T$ is independent of the distribution of $x_{V(\T_n)\setminus S}.$ Also, conditioned on $x_s = z_s$, the distribution of $x_T$ is the uniform distribution on a $|T|$ dimensional truncated cube, of sidelength $\frac{s_0}{10}$, the truncation being due the linear constraint $$\langle x_T, \zeta_S\rangle  \geq x((1, 0)) + x((0, 1))- s_i$$ imposed by $F_i$, where $\zeta_S$ is a  vector in $\R^T$ (taking values $1$ on $\{(0,0), (1, 1)\}$ each and $0$ elsewhere). The euclidean distance of the center of this cube to $F_i$ is less than $\frac{s_0}{50}$, so together with Vaalar's theorem \cite{Vaaler} bounding the volume of a central section of a unit cube  from below by $1$, we see that  conditioned on  $\Lambda_T$ and $\Lambda_S$,  the probability that the distance of $x$ to $F_i$ is less than $\eps$ is at least 
$\eps 2^{-|T|}$ for all sufficiently small $\eps$. It remains for us to obtain a positive lower bound on $\p[\Lambda_S \, \mathrm{and} \, \Lambda_T]$ that is independent of $n$ for sufficiently large $n$.
Note that \beq \p[\Lambda_S \, \mathrm{and} \, \Lambda_T] =  \p[\Lambda_T| \Lambda_S]\p[\Lambda_S].\eeq
Let $\mu_{\Lambda_S}$ denote the conditional probability distribution of $x_S$ (supported on $Q_S$) given $\Lambda_S$.
\beqs \p[\Lambda_T| \Lambda_S] & = & \int \p[x_T \in Q_T| x_S = z_S ]\mu_{\Lambda_S}(dz_S)\\
& \geq & \inf_{z_S \in Q_S}\p[x_T \in Q_T| x_S = z_S ].\eeqs
Let $z_S \in Q_S$. Then, the conditional distribution of $x_T$ given that $x_S = z_S$ is the uniform (with respect to Lebesgue) measure on a polytope that is contained in the set of all vectors in  $\R^T$ which when augmented with $z_S$ are $2s_0$ Lipschitz when viewed as functions on ${S\cup T}.$ The latter polytope has volume at most $(4s_0)^{|T|}.$ Since $Q_T$, for any $z_S$, contains a unit cube of side length $s_0/100$, 
\beq  \p[\Lambda_T| \Lambda_S] & \geq & \inf_{z_S \in Q_S}\p[x_T \in Q_T| x_S = z_S ] \geq 400^{-|T|}.\eeq
Finally, we obtain a lower bound on $\p[\Lambda_S].$ 
We say that a vertex $v \in S$ is reachable from $U$ if there is a sequence of  vertices $u_{-2}, u_{-1}, u_0, v_1, \dots, v_k = v$ such that any $4$ consecutive vertices form an edge in $E(\T_n)$ and $v_0, \dots, v_k \in S$. By our construction of $U$, every vertex in $S$ is reachable from $U$, and the length of the path is at most $2|T| + 10$. Consider the values  of $x - x_{lin}$ on $S$. These values cannot exceed $(2|T| + 10)s_2$. Their mean is $0$. Their joint distribution has a density $g_S$ that is logconcave by Pr\'{e}kopa's Theorem~\ref{thm:prekopa}. The probability that $(x - x_{lin})_S$ lies in a translate of $Q_S$ by $t$ is equal to the value of the convolution of $g_S$ with the indicator $I(Q_S)$ of $Q_S$ at $t$. Multiplying by $\left(\frac{50}{s_0}\right)^{|S|}$ (to have unit $L_1$ norm), it follows that each coordinate in any point of the support of $\left(\frac{50}{s_0}\right)^{|S|} I(Q_S) \ast g$ is bounded above by $(2|T| + 12)s_i$, while the mean of this distribution continues to be $0$. The (differential) entropy of $g$ is bounded above by the sum of the entropies of its one dimensional marginals along coordinate directions, which in turn is bounded above by $\ln\left(2e(2|T| + 11)s_2\right)$ by Lemma~\ref{lem:5}. It follows that the supremum of the density of $\left(\frac{50}{s_0}\right)^{|S|} I(Q_S) \ast g$ is at least $\left(2e(2|T| + 12)s_2\right)^{-|S|}$. It is a theorem of Fradelizi \cite{Fradelizi} that the density at the center of mass of a logconcave density on $\R^{|S|}$ is no less than $e^{- |S|}$ multiplied by the supremum of the density. Applied to  $ I(Q_S) \ast g$, this implies that  $$\p[\Lambda_S] \geq \left(100e^2(2|T| + 11)\left(\frac{s_2}{s_0}\right)\right)^{-|S|}.$$
This shows that there is a universal constant $C > 1$ such that 
for all sufficiently large $n$, the surface area of a codimension $1$ facet of $P_n(s)$ corresponding to $E_i(\T_n)$ is bounded below by $\left(\frac{s_0}{Cs_2}\right)^{\frac{Cs_i}{s_0}} |P_n(s)|^{1 - \frac{1}{n^2 - 1}}.$
\end{proof}
%The following Lemma was proved in Part I, \cite{PartI}.
\begin{lem} \lab{lem:surf_upperbd} Fix $s$ with $0 < s_0 \leq s_1 \leq s_2$ and $\eps > 0$, for all sufficiently large $n$, the surface area of a codimension $1$ facet of $P_n(s)$ corresponding to $E_i(\T_n)$ is bounded above  by $\left(\frac{(2e +\eps) s_0}{s_i}\right)|P_n(s)|^{1 - \frac{1}{n^2 - 1}}.$
\end{lem}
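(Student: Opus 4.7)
The plan is to identify the facet area $A_i$ with, up to an explicit constant, the one-sided partial derivative $\partial^-_{s_i}\f_n(s)$ of $\f_n(s):=|P_n(s)|^{1/(n^2-1)}$ in the $s_i$ variable, and then bound that derivative using the homogeneity, monotonicity, and concavity of $\f_n$ together with the absolute upper bound on $\f_n$ coming from Lemma~\ref{lem:6}.

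First I would use the translation symmetry of $\T_n$ to note that all $n^2$ codimension-$1$ facets of $P_n(s)$ corresponding to edges in $E_i(\T_n)$ are congruent and share a common $(n^2-2)$-dimensional volume $A_i$ (if none of them is actually a facet, then $A_i = 0$ and the claim is trivial). Each defining inequality $\nabla^2 g(e) \le s_i$ has coefficient vector of Euclidean norm $2$ inside the mean-zero hyperplane, so tightening all $n^2$ of them simultaneously to $\le s_i - t$ removes $n^2$ slabs of perpendicular thickness $t/2 + o(t)$ and base area $A_i + o(1)$ that are pairwise disjoint up to $o(t)$. This yields
\[
|P_n(s)| - |P_n(s-te_i)| = \tfrac{n^2 A_i}{2}\,t + o(t), \qquad t\to 0^+.
\]
Since $|P_n(s-te_i)| = \f_n(s-te_i)^{n^2-1}$ and the right derivative of $t\mapsto \f_n(s-te_i)$ at $t=0$ equals $-\partial^-_{s_i}\f_n(s)$ (which exists by concavity of $\f_n$), equating the two expressions identifies
\[
A_i = \frac{2(n^2-1)}{n^2}\,\f_n(s)^{n^2-2}\,\partial^-_{s_i}\f_n(s).
\]

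Next I would combine three structural properties of $\f_n$: positive homogeneity of degree one (from the volume-compatible bijection $g\mapsto \lambda g$ sending $P_n(s)$ onto $P_n(\lambda s)$), monotonicity in each coordinate (from $P_n(s)\subseteq P_n(s')$ whenever $s\le s'$), and concavity (from Brunn--Minkowski applied to the inclusion $P_n(s) + P_n(s') \subseteq P_n(s+s')$); note also that $\f_n(0)=0$ because $P_n(0)=\{0\}$, a periodic discrete concave function on $\T_n$ with all Hessian constraints nonpositive necessarily having all Hessians identically zero (by periodicity these constraints sum to $0$) and hence being constant, and thus zero after subtracting its mean. Taking any supergradient $v$ of $\f_n$ at $s$ with $v_i = \partial^-_{s_i}\f_n(s)$ (such a supergradient exists because the $i$-th coordinate projection of the superdifferential equals $[\partial^+_{s_i}\f_n(s),\partial^-_{s_i}\f_n(s)]$) and evaluating the supergradient inequality at $y=0$ and at $y=2s$ yields $v\cdot s = \f_n(s)$, while monotonicity forces $v_j \ge 0$ for every $j$. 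Together these give $s_i\partial^-_{s_i}\f_n(s) = s_iv_i \le v\cdot s = \f_n(s)$, i.e. $\partial^-_{s_i}\f_n(s) \le \f_n(s)/s_i$.

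Finally, Lemma~\ref{lem:6} applied to the rescaled vector $(2/s_0)s$, combined with homogeneity, gives $\f_n(s) = (s_0/2)\f_n((2/s_0)s) \le (s_0/2)(2e+\eps/4) = (e + \eps/8)s_0$ for all sufficiently large $n$. Combining everything, for $n$ large,
\[
A_i \;\le\; \frac{2(n^2-1)}{n^2}\cdot\frac{(e+\eps/8)s_0}{s_i}\cdot \f_n(s)^{n^2-2} \;\le\; \frac{(2e+\eps)\,s_0}{s_i}\,|P_n(s)|^{1-\frac{1}{n^2-1}},
\]
as desired. The step I expect to require the most care is the first displayed identity -- the identification of the right derivative of the polytope volume with the sum of slab volumes: one must verify that for small positive $t$ the $n^2$ slabs associated with distinct $E_i(\T_n)$ facets are disjoint up to $o(t)$ and each has base area $A_i + o(1)$, which is a standard piecewise-polynomial fact about how a polytope's volume varies under tightening of a family of parallel facet-defining inequalities.
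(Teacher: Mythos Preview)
Your proof is correct and follows essentially the same route as the paper's: both rely on Euler's identity for the homogeneous function $|P_n(s)|$ (equivalently, $v\cdot s=\f_n(s)$ for any supergradient $v$ of the degree-one homogeneous $\f_n$), the nonnegativity of the partial derivatives, and the upper bound on $\f_n(s)$ coming from Lemma~\ref{lem:6}. The paper simply writes the Euler identity as $\sum_i (1-1/n^2)^{-1} s_i w_i^{(n)} = |P_n(s)|$ and drops all but the $i$-th term, whereas you unpack the same inequality through $\f_n$ and its supergradient; your presentation is more detailed (the slab computation identifying $A_i$ with the volume derivative, and the explicit verification that $v\cdot s=\f_n(s)$), but the underlying mechanism is identical.
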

\begin{proof} Note that $$\sum_i \left(1 - \frac{1}{n^2}\right)^{-1} s_i \wn_i = |P_n(s)|,$$ which in turn is bounded above by $(2e +\eps) s_0|P_n(s)|^{1 - \frac{1}{n^2 - 1}}$ for sufficiently large $n$.
It follows for each $i \in \{0, 1, 2\}$, that $w_i^{(n)}$ is bounded above by  $\left(\frac{(2e +\eps) s_0}{s_i}\right)|P_n(s)|^{1 - \frac{1}{n^2 - 1}}.$ This completes the proof of this lemma.
\end{proof}

%What does Kenyon's formula say about the entropy?
\subsection{A lower bound on the $\ell_\infty$ diameter of $P_n(s)$}
\begin{lem}\lab{lem:diameter}
The $\ell_\infty$ diameter of $P_n(s)$ is greater than $(s_1 + s_2)\lfloor n/2\rfloor^2/4$ for all $n$ greater than $1$.
\end{lem}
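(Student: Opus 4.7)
The plan is to construct a single function $g \in P_n(s)$ whose oscillation $\max g - \min g$ is at least $(s_1 + s_2)\lfloor n/2\rfloor^2/4$, and then derive the diameter bound by a translation argument. Since any torus translation $v \mapsto v + v_0$ permutes the rhombi in each class $E_i(\T_n)$ and preserves the mean-zero condition, if $v_{\pm}$ denote vertices where $g$ attains its max and min, the translates $g_{\pm}(v) := g(v + v_{\pm})$ both lie in $P_n(s)$, and $(g_+ - g_-)(0) = \max g - \min g$ yields $\|g_+ - g_-\|_\infty \geq \max g - \min g$.

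To build $g$, I decouple the three families of Hessian constraints by writing $g(x,y) = \phi_2(y) + \phi_1(x - y) - c$, where $c$ normalizes $g$ to mean zero and $\phi_1, \phi_2 : \Z/n\Z \to \R$. A short case analysis of the four vertices of each rhombus type (using the $E_i$-coordinates introduced in Section~\ref{sec:prelim}) shows the following: on any $E_0$- or $E_1$-rhombus the $y$-coordinate takes only two values with cancelling coefficients, so $\phi_2(y)$ contributes zero Hessian there, while its $E_2$-Hessian at base-point $(i,j)$ equals $-(\phi_2(j) - 2\phi_2(j-1) + \phi_2(j-2))$, the negative of a periodic 1D second difference. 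Symmetrically, $\phi_1(x - y)$ contributes only to $E_1$-Hessians, again as the negative of a second difference of $\phi_1$. Thus the constraint $\nabla^2 g \preccurlyeq s$ reduces to requiring each $\phi_i$ to have periodic second difference bounded below by $-s_i$ everywhere.

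With $m := \lfloor n/2\rfloor$, I set $\phi_i$ piecewise-quadratically on $\Z/n\Z$: $\phi_i(y) = (s_i/2)\, y(m - y)$ on $\{0, 1, \ldots, m\}$ (concave, second difference $-s_i$) and $\phi_i(y) = (s_i/2)(y - m)(y - n)$ on $\{m, \ldots, n-1\}$ (convex, second difference $+s_i$). A direct evaluation at the joins $y \equiv 0, m \pmod n$ gives second difference between $0$ and $-s_i/2$, comfortably above $-s_i$. The oscillation of $\phi_i$ works out to $s_i m^2/4$ for even $n$ and $s_i m(m+1)/4$ for odd $n$, both at least $s_i\lfloor n/2\rfloor^2/4$. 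Since $y$ and $x - y$ independently sweep $\Z/n\Z$ as $(x,y)$ ranges over $V(\T_n)$, we have $\max g = \max\phi_2 + \max\phi_1$ and $\min g = \min\phi_2 + \min\phi_1$, so the oscillation factorizes: $\max g - \min g = \text{osc}(\phi_2) + \text{osc}(\phi_1) \geq (s_1 + s_2)\lfloor n/2\rfloor^2/4$. The main care point is the Hessian decoupling computation — easy but combinatorial — and the very small-$n$ case (e.g.\ $n = 2$) where the quadratic formula degenerates to zero and must be replaced by an ad hoc $\phi_i$ with values $\pm s_i/4$, which still satisfies the second-difference bound and gives oscillation $s_i/2 \geq s_i/4$.
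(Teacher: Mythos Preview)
Your decoupling idea is correct and genuinely different from the paper's argument. The paper builds its witness as $r - q$, where $q$ is the global quadratic with constant Hessian $-s$ and $r$ is its piecewise-linear ``roof'' over $n\mathbb{L}$; it then compares this (centered) function with $0 \in P_n(s)$. Your construction instead separates variables as $g(x,y)=\phi_2(y)+\phi_1(x-y)-c$, reducing the $3n^2$ rhombus constraints to two independent one-dimensional conditions $\Delta^2\phi_i \ge -s_i$ on $\Z/n\Z$; combined with the translation trick to pass from oscillation to diameter, this is cleaner and entirely self-contained. (It is worth noting that the two constructions are cousins: writing $q(x,y)=-(s_0/2)x^2-(s_1/2)(x-y)^2-(s_2/2)y^2+\text{linear}$ shows that the paper's $r-q$, restricted to lines $x=\text{const}$ or $x-y=\text{const}$, is of exactly your separated form.)

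There is, however, a small arithmetic slip in your oscillation count. When $n\equiv 2\pmod 4$ (so $m=n/2$ is odd), the maximum of $(s_i/2)y(m-y)$ over integers is $s_i(m^2-1)/8$, not $s_i m^2/8$, and likewise for the convex half; the total oscillation of your $\phi_i$ is then $s_i(m^2-1)/4$, which falls strictly short of the target $s_i m^2/4$ (e.g.\ $n=6$, $m=3$: oscillation $=2s_i$ versus $9s_i/4$). The fix is painless: replace your two-arc $\phi_i$ by the single parabola $\phi_i(y)=(s_i/2)\,y(n-y)$ on $\{0,\dots,n-1\}$, extended periodically. This has $\Delta^2\phi_i=-s_i$ at every $y\neq 0$ and $\Delta^2\phi_i(0)=(n-1)s_i\ge -s_i$, so it is admissible, and its oscillation is $(s_i/2)\lfloor n/2\rfloor\lceil n/2\rceil \ge s_i m^2/2$, comfortably above the bound for every $n\ge 2$ without any ad hoc small-$n$ case.
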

\begin{proof}
Recall from Lemma~\ref{lem:2.3} that there is a unique quadratic function $q$ from $\mathbb L$ to $\R$ such that $\nabla^2q$ satisfies the following.

\ben 
\item  $\nabla^2q(e) = - s_0,$ if $e \in E_0(\mathbb L)$.
\item $\nabla^2q(e)  =  - s_1,$  if $e \in E_1(\mathbb L )$.
\item $\nabla^2q(e)  =  - s_2,$ if  $e \in E_2(\mathbb L)$.
\item $q(0) = q(n) = q(n\omega) = 0$.
\een
We define the function $r$ from $\R^2$ to $\R$ to be the unique function that agrees with $q$ on $n \mathbb L$, but is defined at all points of $\R^2\setminus n{\mathbb L}$ by piecewise linear extension.
In other words, the epigraph of $-r$ is the convex hull of all points of the form $(v, -q(v))$ as $v$ ranges over $n \mathbb L$. 
The function $r - q$ restricted to $\mathbb L$ is invariant under shifts by elements in $n \mathbb L$ and so  can be viewed as a function from $V(\T_n)$ to $\R$. The function from $V(\T_n)$ to $\R$ obtained by adding a suitable constant $\kappa$ to $r - q$ such that it has zero mean is a member of $P_n(s)$. We readily see, by examining one of the sides of a fundamental triangle in $n\mathbb L$ that $\|r- q + \kappa\|_{\ell_\infty}$ is at least $(s_1 + s_2)\lfloor n/2\rfloor^2/4$. Since the constant function taking value $0$ belongs to $P_n(s)$, the lemma follows.
\end{proof}

\begin{comment}
\begin{lem}

Suppose $0 < e_0 = e_1 \leq e_2.$ then denoting $(e_0, e_1, e_2)$ by $e$, we have $(\wn_0(e), \wn_1(e), \wn_2(e)) \in \C.$

\end{lem}

\begin{proof}
By the anisotropic isoperimetric inequality (\ref{eq:2.2}), applied to $K = P_n(k)$ and $E = P_n(e)$, 
we have 
\beq S_K(E) S_E(K) \geq (n^2 - 1)^2|K| |E|.\eeq Let $k = (2, 2, 2)$.

Then,
\beqs \left(\frac{(n^2 - 1)|K|(e_0 + e_1 + e_2)}{3}\right) \sum_i \wn_i(e) \geq (n^2 - 1)|K| \sum_i \wn_i(e) e_i.\eeqs

This implies that \beq  \frac{\wn_0(e)+ \wn_1(e) + \wn_2(e)}{3} \geq \frac{ \wn_0(e) e_0 + \wn_1(e) e_1 + \wn_2(e) e_2}{e_0 + e_1 + e_2}.\lab{eq:4.2}\eeq
Observe that, $e_0 = e_1 \leq e_2$ and so by symmetry, $\wn_0(e) = \wn_1(e)$.
Thus, (\ref{eq:4.2}) implies that $\wn_2(e) \leq \wn_1(e) = \wn_0(e).$ Putting this together with Lemma~\ref{lem:ess} shows that $(\wn_0(e), \wn_1(e), \wn_2(e)) \in \C.$
\end{proof}
Thus, if $s_0 = s_1 \leq s_2$, it is always possible to choose a superdifferential $w(s)$ such that 
$w(s) \in \C$ by taking a subsequential limit of the  sequence $(w^{(n)})_{n\in \N}$. Therefore, for such $s$, the conclusion of Theorem~\ref{thm:2} holds, namely,
for any $\eps > 0$, 
we have  $$\lim_{n \ra 0} \p\left[\|g\|_\infty > n^{\frac{15}{8} + \eps}\right] = 0$$  when $g$ is randomly sampled from $P_n(s)$.
\end{comment}
%\subsection{An upper  bound on $|P_n(s)|$.}

\section{An upper bound on $|P_n(s)|$}

By known results on vector partition functions \cite{Brion}, $P_n(s)$ is a piecewise polynomial function of $s$, and each domain of polynomiality is a closed  cone known as a chamber of the associated vector partition function. For a different perspective, see also Lemma $2$ of \cite{Klartag}. It follows by scaling, that these polynomials are homogenous, of degree $n^2 - 1$. Further in the cone $\min(s_0, s_1, s_2) > 0$, $|P_n(s)|$ is differentiable, since the facets of $P_n(s)$ have finite volume.
 
%When $n$ is clear from context, we will drop the superscript on $w_i^{(n)}$.

Recall that \beq\lab{eq:wn}\frac{|P_n(s)|^{-1}}{n^2} \left(\frac{\partial|P_n(s)|}{\partial s_0}, \frac{\partial|P_n(s)|}{\partial s_1}, \frac{\partial|P_n(s)|}{\partial s_2}\right) =: |P_n(s)|^{-1} (w_0^{(n)}, w_1^{(n)}, w_2^{(n)}).\eeq

Let $\Delta_w$ be the function from $V(\T_{n})$ to $\R$, uniquely specified by the following condition. For any $f:V(\T_{n}) \ra \R$, and $(v_1, v_2) = v \in V(\T_{n})$,
\beq 2|P_n(s)|(\Delta_w \ast f)(v) & = & w_0^{(n)}(D_0f(v_1 -1, v_2 -1) + D_0f(v_1 -1, v_2))\nonumber\\
& + & w_1^{(n)}(D_1f(v_1, v_2 ) + D_1f(v_1 -1, v_2-1))\nonumber\\
& + & w_2^{(n)}(D_2f(v_1 -1, v_2 -1) + D_2f(v_1, v_2-1)).\lab{eq:7.7}\eeq
Note that $\Delta_w$ can be viewed as a self adjoint operator acting on $\mathbb{C}^{V(\T_{n})}$ equipped with the standard inner product. 
%$\De_w$ differs from the $\De$ only up to the scalar factor of $|P_n(s)|$.
% but we will find it convenient to define it as a function from $V(\T_{n})$ to $\R$ that acts via %convolution on complex valued functions defined on $V(\T_{n})$.

%In the following preliminary bound, the Laplacian $ \De_w$ makes an appearance. 

Given a self adjoint  linear operator $A$ from $\R^{V(\T_n)}$ to itself, that maps the linear subspace of mean zero functions (which we denote by $\R^m$) to itself,
we define $|A|$ to be the absolute value of the product of the eigenvalues of $A$ restricted to $\R^m$. 

\begin{lem}\lab{lem:4.1}  
 \beq |\De_w|^{ \frac{ 1}{m}} |P_n(s)|^{\frac{1}{m}} \leq e  + o_n(1).\eeq
\end{lem}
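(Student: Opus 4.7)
The plan is to interpret the inequality as an entropy bound. Let $X$ be uniform on $P_n(s)$, viewed in the mean-zero subspace $\R^m \subseteq \R^{V(\T_n)}$ with $m = n^2 - 1$, so that $h(X) = \log|P_n(s)|$. If $\Delta_w$ is singular on $\R^m$ then $|\Delta_w| = 0$ and there is nothing to prove; otherwise, noting that (\ref{eq:7.7}) shows $\Delta_w$ kills constants and maps $\R^m$ to itself (each sum $\sum_e D_i f(e)$ telescopes to zero on the torus), the invertible linear change of variable $Z := \Delta_w X$ on $\R^m$ yields $h(Z) = h(X) + \log|\Delta_w|$. Hence the claim reduces to proving $h(Z) \leq m(1 + o_n(1))$.

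I would next extract uniform one-sided bounds on the coordinates of $Z$. By (\ref{eq:7.7}), each $Z(v)$ is a linear combination of the six Hessian values $D_i X$ at rhombi adjacent to $v$ with nonnegative coefficients $w_i^{(n)}/(2|P_n(s)|)$. Combining the polytope constraint $D_i X(\cdot) \leq s_i$ with Euler's identity $\sum_i s_i w_i^{(n)} = (1 - n^{-2})|P_n(s)|$ (the degree-$m$ homogeneity of $|P_n(\cdot)|$, as already invoked in Lemma~\ref{lem:surf_upperbd}) gives that $\tilde Z(v) := (1 - n^{-2}) - Z(v) \geq 0$ pointwise. Torus-translation symmetry of $P_n(s)$ combined with the mean-zero constraint forces $\EE[X(v)] = 0$ for every $v$, so $\EE[Z(v)] = 0$ and $\EE[\tilde Z(v)] = 1 - n^{-2}$. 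Since $\tilde Z(v)$ inherits a bounded density from $X$ being uniform on a bounded polytope, Lemma~\ref{lem:5} applied after rescaling the mean to $1$ yields $h(\tilde Z(v)) \leq 1 + \log(1 - n^{-2}) < 1$ for every $v$.

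The last step is subadditivity, with care taken for the affine constraint $\sum_v \tilde Z(v) = m$ that encodes the mean-zero property of $Z$. The hyperplane $H := \{y \in \R^{V(\T_n)} : \sum_v y_v = m\}$ is a translate of the mean-zero subspace, so translation invariance of differential entropy gives $h(\tilde Z) = h(Z)$. Parametrizing $H$ by dropping one coordinate $v_0$ and restoring it as $m - \sum_{v \neq v_0} y_v$ is an affine bijection with $\R^m$ whose Jacobian is $\sqrt{\det(I + \mathbf{1}\mathbf{1}^\tr)} = n$, and subadditivity of entropy across the $m$ free coordinates yields
\[
h(Z) = h(\tilde Z) \leq \sum_{v \neq v_0} h(\tilde Z(v)) + \log n \leq m + \log n = m\bigl(1 + o_n(1)\bigr).
\]
Exponentiating then gives $|\Delta_w|^{1/m}|P_n(s)|^{1/m} \leq e \cdot n^{1/m} = e + o_n(1)$.

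The main obstacle is precisely this final bookkeeping: verifying that each marginal $\tilde Z(v)$ has a bounded Radon-Nikodym derivative (so that Lemma~\ref{lem:5} applies in its stated form), and carefully reconciling the notion of differential entropy on the $(n^2-1)$-dimensional affine hyperplane $H$ with the coordinate-projected entropy on $\R^m$ so that the Jacobian factor $\log n$ is tracked correctly. The remaining ingredients -- the telescoping identity for $\sum_e D_i f(e)$ on the torus, the vanishing of the relevant means from torus-translation symmetry, and Euler's homogeneity identity -- are routine.
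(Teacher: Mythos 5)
Your proposal is correct, but it proves the lemma by a genuinely different route than the paper. The paper's argument is geometric: it starts from the convolution identity $|L|^2=\int_{L+(-L)}|L\cap(g+L)|\,dg$ with $L=P_n(s)$, applies the Dinghas anisotropic isoperimetric inequality (\ref{eq:2.2}) to $K(g)=L\cap(g+L)$, identifies $S_{K(g)}(L)$ with $m|L|-\tfrac12\|W\nabla^2 g\|_1$, uses that $W\nabla^2 g\mapsto |P_n(s)|\,\De_w g$ is an $\ell_1$-contraction, and then performs the change of variables $g\mapsto \De_w g$ (Jacobian $|\De_w|^{-1}$) followed by an explicit integral over an $\ell_1$-ball, with the constant $e$ emerging from the asymptotics $\lim_d d\,V_1(d)^{1/d}=2e$. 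You instead run an information-theoretic argument: entropy change of variables $h(\De_w X)=h(X)+\log|\De_w|$ on the mean-zero subspace, the pointwise bound $Z(v)\le 1-n^{-2}$ obtained from $D_rX\le s_r$, $w_r^{(n)}\ge 0$ and Euler homogeneity $\sum_r s_r w_r^{(n)}=(1-n^{-2})|P_n(s)|$ (the same identity the paper invokes in Lemma~\ref{lem:surf_upperbd}), the zero-mean property of the marginals from translation symmetry, Lemma~\ref{lem:5} (maximum entropy of the exponential law) coordinatewise, and subadditivity with the harmless $\log n$ Jacobian from parametrizing the hyperplane $\sum_v y_v=m$. In effect you extend the paper's own Lemma~\ref{lem:6} entropy technique by twisting with $\De_w$, so the constant $e$ again comes from the exponential maximizer rather than from $\ell_1$-ball volumes. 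I checked the key steps: the telescoping shows $\De_w$ maps mean-zero functions to mean-zero functions; in the nonsingular case $|\De_w|$ is indeed $|\det(\De_w|_{\R^m})|$, so the entropy shift is legitimate; each marginal is a nondegenerate, compactly supported log-concave (hence bounded) density, so Lemma~\ref{lem:5} applies after the rescaling you describe; and $e\,n^{1/m}=e+o_n(1)$. What each approach buys: yours is more elementary and self-contained (no anisotropic isoperimetry, no $\ell_1$-ball asymptotics, and the singular case is trivial), while the paper's route stays within the Brunn--Minkowski/surface-area machinery that it reuses elsewhere (e.g.\ in Subsection~\ref{ssec:appl_isop}) and avoids the invertibility and bounded-density bookkeeping you rightly flag as the delicate points of your argument.
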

\begin{proof}
Let $L$ denote $P_n(s)$ and $K(g)$ denote $L \cap (g + L)$, where $g$ belongs to the span of $P_n(s)$ which we identify with $\R^m$.
By convolving the indicator of $L$ with that of $-L$,  we see that 
\beq \int\limits_{x \in L + (-L)} |L \cap (x + L)| dx = |L|^2. \lab{eq:5.1}\eeq

Recall that $$S_K(L) := \lim\limits_{\eps \ra 0} \frac{|L + \eps K| - |L|}{\eps}.$$
Recall from (\ref{eq:2.2}) that 
\beqs \frac{|K|}{|L|} \leq \left(\frac{S_K(L)}{m|L|}\right)^m. \eeqs

We define $S_{K-L}(L) := S_K(L) - m|L|,$ which since $K \subseteq L$, is a nonpositive real number. Then,

 \beqs \frac{|K|}{|L|} & \leq &  \left(\frac{S_K(L)}{m|L|}\right)^m.\eeqs
%& \leq & \exp\left(m\log\left( 1 + \frac{S_{K-L}(L)}{m|L|}\right)\right)\\
%& \leq & \exp\left(\frac{S_{K-L}(L)}{|L|}\right). \eeqs

Let us define the ``negative part" of the Hessian of $g$, denoted $(\nabla^2g)_\_$ to be the real valued function on the edges (unit rhombi) $e$ in $\T_n$ such that \beq (\nabla^2g)_\_ (e) = \min(0, \nabla^2g(e)).\eeq

Recall from (\ref{eq:wn}) that \beqs \frac{1}{n^2} \left(\frac{\partial|P_n(s)|}{\partial s_0}, \frac{\partial|P_n(s)|}{\partial s_1}, \frac{\partial|P_n(s)|}{\partial s_2}\right) =: (w_0^{(n)}, w_1^{(n)}, w_2^{(n)}).\eeqs 
Let $W$ denote the operator that maps $h:E(\T_n) \ra \R$ to $W h$, where for $e \in E_r(\T_n)$, we define $w(e)$ to be $ w_r^{(n)}$ and set  $W h(e) = w(e) h(e).$ For $e \in E_r(\T_n)$, we define $s(e)$ to be $s_r$. 

It follows that  \beqs S_{K-L}(L) & = &  \sum\limits_{e \in E(\T_n)} w(e)(\nabla^2g)_\_(e)\\
& = & (-1)\frac{\|W\nabla^2 g\|_1}{2}.\eeqs

We note that the map ${W\nabla^2g} \mapsto  \De_w g$ is a contraction in the respective $\ell_1$ norms. 
It follows from (\ref{eq:5.1}) that 
\beq |L|^2 & = & \int\limits_{L + (-L)} |K(g)| dg\nonumber\\
& \leq & \int\limits_{L + (-L)} |L| \left(\frac{S_{K}(L)}{m|L|}\right)^{m} dg\nonumber\\
& = &   \int\limits_{L + (-L)} |L| \left(1 -\frac{{\|W\nabla^2 g\|_1}}{2m|L|}\right)^m dg\nonumber\\
& \leq & \int\limits_{L + (-L)} |L| \left(1 -\frac{{\||L| \De_w g\|_1}}{2m|L|}\right)^m dg.\lab{eq:L1}\eeq
%Let us for the moment apply the normalization $|L| = 1$.
We thus see that \beq |L| & \leq & \int\limits_{L + (-L)} \left(1-\frac{{\| \De_w g\|_1}}{2m}\right)^m dg\\
& \leq &  \int\limits_{\R^m \cap  \{\|f\|_1 \leq 2m\}} \left(1 -\frac{{\|f\|_1}}{2m}\right)^m | \De_w|^{-1}df.\eeq
We see by a packing argument that
\beqs  \left(\int\limits_{\R^m \cap \{\|f\|_1 \leq 2m\}} \left(1 -\frac{{\|f\|_1}}{2m}\right)^m df\right)^\frac{1}{m} & \leq &   \left(\int\limits_{\{\|f\|_1 \leq 2m\}} \left(1 -\frac{{\|f\|_1}}{2m}\right)^m df\right)^\frac{1}{m},\eeqs
where the last integral is over an $\ell_1$ ball of radius $2m$ contained in $\R^{V(\T_n)}$.

We evaluate $$  \left(\int\limits_{\{\|f\|_1 \leq 2m\}} \left(1 -\frac{{\|f\|_1}}{2m}\right)^m df\right)^\frac{1}{m}$$ by integrating over the boundaries of $\ell_1$ balls  of increasing radius as follows.
Let $V_1(d)$ denote the volume of a unit $\ell_1$ ball in $\R^d$. We observe that$$\lim_{d \ra \infty} d\left( V_1(d)^\frac{1}{d}\right)  = \lim_{d \ra \infty} d \left(\frac{2^d}{d!}\right)^{\frac{1}{d}} =   2e.$$
\beqs \left(\int\limits_{\{\|f\|_1 \leq 2m\}} \left(1 -\frac{{\|f\|_1}}{2m}\right)^m df\right)^\frac{1}{m} & = & 
\left(\int_0^{2m} \int\limits_{\{\|f\|_1 = t\}} \left(1 -\frac{t}{2m}\right)^m df dt \right)^\frac{1}{m}\\
& = & 
\left(\int_0^{2m} \int\limits_{\{\|f\|_1 \leq t\}} \left(\frac{\sqrt{m}}{t}\right)\left(1 -\frac{t}{2m}\right)^m df dt \right)^\frac{1}{m}\\
& \leq & 
\left(\sup_{t \in [0, 2m]}\left(\frac{2m \sqrt{m}}{t}\right)\left(t -\frac{t^2}{2m}\right)^m V_1(m+1) dt \right)^\frac{1}{m}\\
& \leq & 
\left(\sup_{t \in [0, 2m]}\left(\frac{2m \sqrt{m}}{t}\right)\left(t - \frac{t^2}{2m}\right)\left(\frac{m}{2}\right)^{m-1} V_1(m+1) dt \right)^\frac{1}{m}.\eeqs
Therefore, we see that \beq \lab{eq:2.29} |L|^{\frac{1}{m}}| \De_w|^{\frac{1}{m}} \leq (e + o(1)).\eeq

This proves the lemma.
\end{proof}

Let $\C$ denote the open cone in $\R_+^3$ consisting of points $\tilde{u}=(\tilde{u}_0, \tilde{u}_1, \tilde{u}_2)$ such that  
$$\min_\sigma\left(\tilde{u}_{\sigma(0)} + \tilde{u}_{\sigma(1)}- \tilde{u}_{\sigma(2)}\right) > 0,$$ 
where $\sigma$ ranges over all permutations of $\{0, 1, 2\}$. 
%We see that every point $\tilde{u} \in \C$ also satisfies 
%\beqs \tilde{u}_0^2 + \tilde{u}_1^2 + \tilde{u}_2^2 < 2\left(\tilde{u}_0 \tilde{u}_1 + \tilde{u}_1 \tilde{u}_2 + %\tilde{u}_2 \tilde{u}_0\right).\eeqs

Note that the expression $$(a + b - c)(a - b + c) + (a + b - c)(-a + b + c) + (a - b + c)(-a + b + c),$$ simplifies to
$$(-1)(a^2 + b^2 + c^2) +  2ab + 2bc + 2ca.$$
Thus, we see that every point $\tilde{u} \in \C$ also satisfies 
\beqs \tilde{u}_0^2 + \tilde{u}_1^2 + \tilde{u}_2^2 < 2\left(\tilde{u}_0 \tilde{u}_1 + \tilde{u}_1 \tilde{u}_2 + \tilde{u}_2 \tilde{u}_0\right).\eeqs

When $(w_0^{(n)}, w_1^{(n)}, w_2^{(n)}) \in \C$, a theorem of Kenyon (see Theorem 1.1 in \cite{Kenyonlogdet}) shows us how to estimate $ |\De_w|^{\frac{1}{m}}$ asymptotically.

Note that in this limit we keep $$\frac{(\wn_0, \wn_1, \wn_2)}{2|P_n(s)|}$$ constant as $n \ra \infty$. In the process $s$ may vary as a function of $n$. Such  $s$ exist for all sufficiently large $n$, by Minkowski's theorem \cite{Klain} for polytopes, which is stated below.
\begin{thm}\lab{thm:Mink}{\bf (Minkowski)}
Suppose $\e_1, \e_2, \dots, \e_k$ are unit
vectors that do not all lie in a hyperplane of positive codimension, and suppose that $\a_1, \a_2, \dots, \a_k >0.$ If
$\sum_i \a_i \e_i = 0$
then there exists a polytope $P_n$ having facet unit normals $\e_1, \e_2, \dots, \e_k$ and
corresponding facet areas $\a_1, \dots, \a_k$. This polytope is unique up to translation.
\end{thm}
%We have plotted $\left(\frac{w_0 + w_1 + w_2}{2 r}\right)\exp(-\rho(w))$ as a function of $\theta_0$ and $\theta_1$. 
Suppose  that $$(\wn_0)^2 + (\wn_1)^2 + (\wn_2)^2 - 2\left(\wn_0\wn_1  + \wn_1\wn_2 + \wn_2\wn_0\right) = (-4) |P_n(s)|^2.$$ This can be achieved by multiplying $\wn$ by a suitable positive scalar, since $\wn \in \C$. We define $\tw_i$ to be $\frac{\wn_i}{2|P_n(s)|}.$
Setting $$ \tan \theta_i := \tw_0+ \tw_1 + \tw_2 - 2 \tw_i,$$ for $i = 0, 1, 2$, where $\theta_i \in [0, \pi/2]$, we see that $$\tan \theta_0 \tan \theta_1 + \tan \theta_1 \tan\theta_2 + \tan \theta_2 \tan \theta_0 = 1$$  because
 \beq \tw_0^2 + \tw_1^2 + \tw_2^2 - 2\left(\tw_0\tw_1  + \tw_1\tw_2 + \tw_2\tw_0\right) = -1.\lab{eq:w123}\eeq This implies that  
 $$\tan(\theta_0 + \theta_1) = \frac{\tan \theta_0 + \tan \theta_1}{1 - \tan \theta_0 \tan \theta_1} = \cot \theta_2 = \tan\left(\frac{\pi}{2} - \theta_2\right).$$
 Therefore, $\theta_0 + \theta_1 + \theta_2 = \frac{\pi}{2}$. Since the $\theta_i \in [0, \frac{\pi}{2}]$, 
 giving the graph edges corresponding to diagonals of rhombic hyperedges in $E_i$,  weight $\tan \theta_i,$ gives rise to an isoradial embedding in the sense of \cite{Kenyonlogdet}. These graph edges  with weights $\tan \theta_0, \tan \theta_1$ and $\tan \theta_2$ correspond respectively to the sides $BC$, $AC$ and $AB$ in the above figure.
 With this notation, by Theorem 1.1 of \cite{Kenyonlogdet}, we have
 \beqs \lim_{n \ra \infty}  |\De_w|^\frac{1}{m} = \exp\left(\frac{2}{\pi}\sum_{i=0}^2 \left(-\int_0^{\theta_i} \log (2\sin t) dt - \int_0^{\frac{\pi}2 - \theta_i} \log(2 \sin t) dt + \theta_i \log \tan(\theta_i)\right)\right). \eeqs

\begin{figure}
\begin{center}
\includegraphics[scale=0.4]{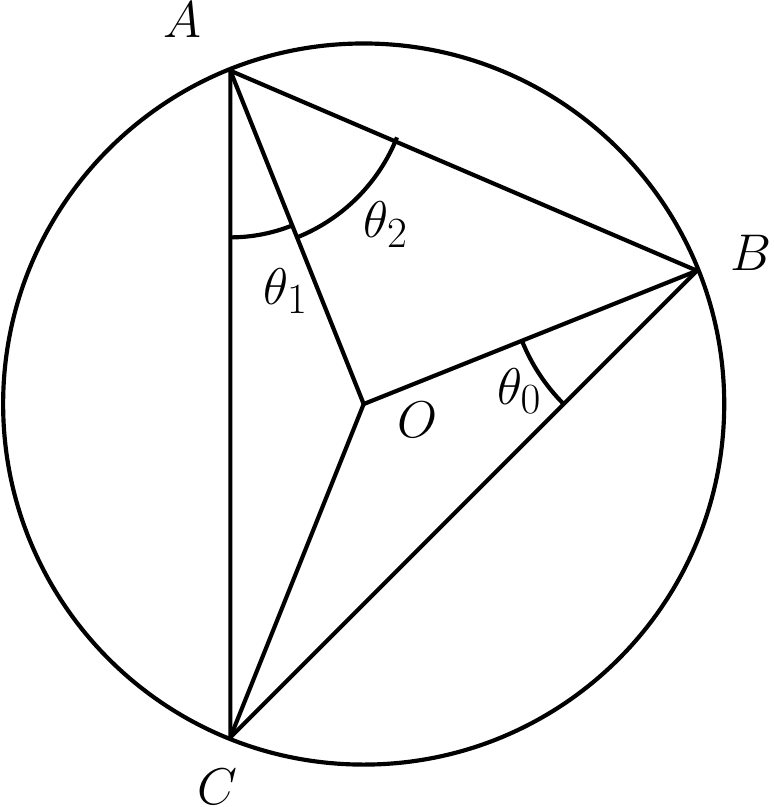}
\caption{$\theta_i$ in an isoradial embedding of one triangle of the equilateral lattice.}
\end{center}
\label{fig:circum}
\end{figure}

We thus have the following corollary to Lemma~\ref{lem:4.1}.
\begin{cor}
Suppose $\tw \in \C$ is fixed and satisfies the normalization condition (\ref{eq:w123}), and $s = s^{(n)}$ as a function of $n$ varies correspondingly. Then, 
 \beqs  \limsup_{n \ra \infty} |P_n(s^{(n)})|^{\frac{1}{m}} \leq \exp\left(1 + \frac{2}{\pi}\sum_{i=0}^2 \left(\int_0^{\theta_i} \log (2\sin t) dt + \int_0^{\frac{\pi}2 - \theta_i} \log(2 \sin t) dt - \theta_i \log \tan(\theta_i)\right)\right).\eeqs
\end{cor}

\section{Various norms}
\subsection{Bounds on the $\ell_p$ norm of a point in $P_n(s)$}

Our strategy will be to cover the set of points in $P_n(s)$ that are far from the origin  by a small number of polytopes, each of which is of small volume.

%The following Lemma was proved in Part I, \cite{PartI}.
\begin{lem}\lab{lem:infty}
Suppose that $\eps_0 > 0$ and $2 = s_0 \leq s_1 \leq s_2.$ Let $x \in P_n(s)$ be such that 
$\|x\|_{\infty} \geq \eps_0 n^2.$ Then, for any $p \in [1, \infty)$,
\beq\|x\|_{p} \geq \left(\frac{\sqrt{3}\eps_0 n}{8 s_2}\right)^{\frac{2}p}\left(\frac{\eps_0 n^2}{2}\right).\eeq
\end{lem}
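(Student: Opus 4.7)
My strategy is to combine a Lipschitz estimate on $x$ with the observation that an extremal value $|x(v_0)| \geq \eps_0 n^2$ forces $|x| \geq \eps_0 n^2/2$ throughout a lattice disk around $v_0$; counting the lattice points in that disk then yields the $\ell_p$ bound.

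The Lipschitz estimate I would use, following the argument in the proof of Lemma~\ref{lem:3-}, is that for every edge $(v,v+z)$ of $\T_n$ with $z \in \{1,\omega,\omega^2\}$ one has $|x(v+z)-x(v)| \leq n s_2$. Indeed, the piecewise linear gradient of $x$ jumps by a vector of magnitude $2|\nabla^2 x|/\sqrt{3}$ across each lattice edge, and although the constraint $\nabla^2 x \preccurlyeq s$ controls these jumps only on one side, summing them around a closed loop of length $n$ on the torus forces the positive and negative parts to balance. This pins the gradient to a range of size $O(n s_2)$, giving the desired edge-slope bound.

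Let $v_0 \in V(\T_n)$ attain $\|x\|_\infty = M \geq \eps_0 n^2$ and set $r := \lfloor \eps_0 n/(2 s_2)\rfloor$. For every $v$ within lattice distance $r$ of $v_0$, the Lipschitz estimate gives $|x(v)-x(v_0)| \leq n s_2 \cdot r \leq \eps_0 n^2/2 \leq M/2$, so $|x(v)| \geq M/2 \geq \eps_0 n^2/2$. Moreover $M \leq n^2 s_2$ (again by the Lipschitz bound together with the fact that $\T_n$ has diameter at most $n$), so $r \leq n/2$ and the lattice ball of radius $r$ fits inside $\T_n$ without wraparound. Since the triangular-lattice ball of radius $r$ contains at least $1+3r(r+1) \geq 3r^2$ vertices,
\[
\|x\|_p^p \;\geq\; 3 r^2 \left(\tfrac{\eps_0 n^2}{2}\right)^{p} \;\geq\; 3 \left(\tfrac{\eps_0 n}{2 s_2}\right)^{\!2} \left(\tfrac{\eps_0 n^2}{2}\right)^{p},
\]
and taking $p$-th roots yields $\|x\|_p \geq \bigl(\tfrac{\sqrt{3}\,\eps_0 n}{2 s_2}\bigr)^{2/p} \bigl(\tfrac{\eps_0 n^2}{2}\bigr)$, which is stronger than the claim since $\sqrt{3}/2 > \sqrt{3}/8$.

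The main obstacle is the Lipschitz estimate itself. The rhombus constraint $\nabla^2 x \preccurlyeq s$ is only one-sided, so purely local bookkeeping does not suffice; one must combine the jump bound on the PL-gradient across each edge with the telescoping of those jumps around torus cycles, as in Lemma~\ref{lem:3-}, to pin the gradient uniformly by $O(n s_2)$.
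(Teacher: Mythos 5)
Your proposal is correct and follows the same skeleton as the paper's proof: a uniform slope bound of order $ns_2$, a lattice disk of radius $\sim \eps_0 n/s_2$ around the extremal vertex on which $|x| \geq \eps_0 n^2/2$, and a count of lattice points in that disk to get the $\ell_p$ bound. The only real difference is how the two-sided Lipschitz estimate is extracted from the one-sided constraint $\nabla^2 x \preccurlyeq s$, and you correctly identify this as the crux. The paper starts at the global minimum $v_-$, where the slope of any incident triangle is at most $s_2$, and propagates the one-sided growth along a chain of fewer than $4n$ rhombi to conclude that no triangle has slope exceeding $4ns_2$. You instead exploit periodicity: along a closed lattice line of length $n$ in any of the directions $1,\omega,1+\omega$ the first differences telescope to zero, while the collinear second differences are bounded above by $s_i+s_j \leq 2s_2$ (each being a sum of two rhombus Hessians), which pins every first difference to at most about $ns_2$; this is the clean way to phrase your "loop balancing," rather than via jumps of the piecewise linear gradient, which is looser as written. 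Your route buys a slightly better constant ($2s_2$ in place of $8s_2$) and an explicit treatment of wraparound (via $M\le n^2 s_2$, hence $r\le n/2$), which the paper leaves implicit. One small point of care: because of the floor in $r=\lfloor \eps_0 n/(2s_2)\rfloor$, when $\eps_0 n/(2s_2)$ is small the disk degenerates, but then $3(\eps_0 n/(8s_2))^2<1$ and the center vertex alone already gives the stated inequality, so the claim still holds; this is the same level of granularity at which the paper argues.
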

\begin{proof}
Let the magnitude of the slope of $x$ on a unit triangle $t $ with vertices $v_i, v_j, v_k$ in $\T_n$ be defined to be $\max(|x(v_i) - x(v_j)|, |x(v_j) - x(v_k)|, |x(v_k) - x(v_i)|)$.
 Choose $v_- \in \T_n$ such that $x(v_-)$ is minimal and $v_+ \in \T_n$ such that $x(v_+)$ is maximal.
 Note that the magnitude  of the slope of a triangle $t$ containing $v_-$ cannot exceed $s_2$ because the discrete Hessian of all the rhombi containing $v_-$ are bounded above by $s_2$. It is possible to go from one unit triangle with vertices in $\T_n$ to $v_-$ via a sequence of vertices, every $4$ consecutive vertices of which form a unit rhombus,  such that the total number of rhombi is less than $4n$. For this reason the slope of $x$ at no unit triangle can exceed $4ns_2$ in magnitude. Let $v = v_+$ if $x(v_+) \geq - x(v_-)$ and $v = v_-$ otherwise. Therefore, 
$\|x\|_{\infty} \geq \eps_0 n^2$ implies that any vertex $\widehat v$  within a lattice distance of 
$\frac{\eps_0 n^2}{8ns_2}$ of $v$ satisfies $\frac{x(\widehat v)}{x(v)} > \frac{1}{2},$ implying that $|x(\widehat{v})| \geq \frac{\eps_0 n^2}{2}.$  The number of vertices within a lattice distance of $\frac{\eps_0 n^2}{8ns_2}$ of $v$ is at least $3\left(\frac{\eps_0 n}{8 s_2}\right)^2$. Therefore, \beq\|x\|_{p}^p \geq 3\left(\frac{\eps_0 n}{8 s_2}\right)^2\left(\frac{\eps_0 n^2}{2}\right)^p.\eeq This implies the lemma.
\end{proof}

\subsection{Discrete Sobolev norms}

\begin{defn} For $g \in \R^{V(\T_n)}$, we define the discrete Sobolev (semi-)norm $\|g\|_{L_p^2}$ by
\beq \|g\|_{L_p^2} := \left(\sum_{v \in V(\T_n)} \left(|D_0 g(v)|^p + |D_1 g(v)|^p + |D_2 g(v)|^p\right)\right)^{\frac{1}{p}}. \eeq
\end{defn}

%This can be thought of as a discrete analogue of a Sobolev norm.

%\subsection{Preliminaries involving the $\psi_1$ norm}
A random variable $Z$ in $\R$ that satisfies for some positive real $K$, $$\E[\exp(|X|/K)] \leq 2$$ is called subexponential. 
\begin{defn}[$\psi_1$ norm]
We define $$\|Z\|_{\psi_1} = \inf\{ t > 0: \E[\exp(|X|/t)] \leq 2\}.$$
\end{defn}

Fix $i \in \{0, 1, 2\}$.  Let $v \in V(\T_n)$ and let the density of the distribution of $\frac{(-1) D_i g(v)}{s_i}$ be denoted by $f$.   We then see that $f$ is independent of the specific $v$ chosen, (by the transitive action of $\T_n$ on itself) and have \beq \int_\R x f(x) dx = 0.\eeq and \beq\lab{2.10}\int_\R f(x) dx = 1.\eeq

Since $f$ is bounded from above and is continuous, it achieves its supremum. Let $x_0 \in \R$ satisfy \beq f(x_0) = \sup_{x \in \R} f(x) =: c_0. \eeq
\begin{lem}\lab{lem:5.4-22nov}
For all $x \geq x_0 + 6$,

\beq f(x) \leq \frac{2^{-\frac{x -( x_0 + 6)}{6}}}{6} \leq C \exp\left(- \left(\frac{\ln 2}{6}\right) x\right).\eeq

\end{lem}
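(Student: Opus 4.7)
Since $g$ is sampled from the uniform distribution on the convex polytope $P_n(s)$ and $-D_i g(v)/s_i$ is a linear functional of $g$, Pr\'{e}kopa's Theorem~\ref{thm:prekopa} forces $f$ to be a log-concave density. In particular $f$ is unimodal with maximum $c_0=f(x_0)$ and is non-increasing on $[x_0,\infty)$. Moreover, the constraint $\nabla^2 g\preccurlyeq s$ forces $-D_i g(v)/s_i\ge -1$, so the support of $f$ is contained in $[-1,\infty)$. The plan is to combine three ingredients: (a) a Markov-type bound giving $f(x_0+6)\le 1/6$, (b) a rearrangement comparison giving $c_0\ge 1/2$, and (c) concavity of $\log f$ to extrapolate (a) and (b) into the claimed exponential decay past $x_0+6$.

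Ingredient (a) is immediate: since $f$ is non-increasing on $[x_0,x_0+6]$ and has total mass at most $1$,
\[ 6\,f(x_0+6)\;\le\;\int_{x_0}^{x_0+6}f(t)\,dt\;\le\;1. \]
Ingredient (b) uses a stochastic dominance argument. From $f\le c_0$ one has $F(t):=\int_{-1}^t f(s)\,ds\le c_0(t+1)$ for $t\in[-1,-1+1/c_0]$, and $F(t)\le 1$ elsewhere, so $F$ is pointwise dominated by the CDF $F^\ast$ of the uniform density $c_0\cdot\mathbf{1}_{[-1,\,-1+1/c_0]}$. Hence $0=\E[X]\ge \E_{F^\ast}[X]=-1+1/(2c_0)$, which yields $c_0\ge 1/2$.

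For ingredient (c), let $h:=\log f$, which is concave on $[x_0,\infty)$. Its right-derivative at $x_0+6$ is bounded above by the chord slope $(h(x_0+6)-h(x_0))/6$, which by (a) and (b) is at most $(\log(1/6)-\log(1/2))/6=-(\log 3)/6$. Concavity of $h$ then gives $h(x)\le h(x_0+6)-(\log 3)(x-x_0-6)/6$, i.e.
\[ f(x)\;\le\;f(x_0+6)\cdot 3^{-(x-x_0-6)/6}\;\le\;\tfrac{1}{6}\cdot 2^{-(x-x_0-6)/6}, \]
for $x\ge x_0+6$, which is the first inequality of the lemma. The second inequality is the trivial rewrite $2^{-(x-x_0-6)/6}=e^{(\log 2)(x_0+6)/6}\,e^{-(\log 2)\,x/6}$, absorbing the $x_0$-dependent prefactor into a constant $C$.

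The main obstacle is step (b). Log-concavity together with the normalization $\int f=1$ alone does not produce the claimed decay rate, since a very flat $f$ near its mode could leave the slope of $\log f$ on $[x_0,x_0+6]$ too small to reach $-(\log 2)/6$. What rules out such flatness is the joint use of $\mathrm{supp}\,f\subseteq[-1,\infty)$ and $\E[X]=0$, and the cleanest way to extract the resulting lower bound $c_0\ge 1/2$ is the uniform rearrangement comparison described above.
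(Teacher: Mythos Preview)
Your argument for the first inequality is correct and runs parallel to the paper's, with one genuinely different ingredient. Both proofs obtain $f(x_0+6)\le 1/6$ from monotonicity and total mass, then extrapolate via concavity of $\log f$ using a lower bound on $c_0=f(x_0)$. The paper gets $c_0\ge 1/3$ by the chain $\int_{-1}^0 f\le c_0$, $\int_1^\infty f\le \int_0^\infty xf\le c_0$, hence $\int_0^1 f\ge 1-2c_0$, whence $c_0\ge 1-2c_0$. Your stochastic-dominance comparison with the uniform density on $[-1,\,-1+1/c_0]$ is cleaner and yields the sharper $c_0\ge 1/2$; this gives chord slope $-(\log 3)/6$ rather than the paper's $-(\log 2)/6$, so your first inequality is actually a bit stronger than stated.

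There is, however, a gap in your treatment of the second inequality. You write that the prefactor $e^{(\log 2)(x_0+6)/6}$ is ``absorbed into a constant $C$,'' but under the paper's convention $C$ depends only on $s$, not on $n$, and the density $f$ (hence the location $x_0$ of its mode) varies with $n$. You therefore need an $n$-independent upper bound on $x_0$, which you never establish. The paper does supply this: assuming $x_0>0$, log-concavity makes $f$ non-decreasing on $[-1,x_0]$, so $f(0)=\inf_{[0,x_0]} f$ and $\int_{-1}^0 f\le f(0)$; combining $\int_0^\infty xf=-\int_{-1}^0 xf\le f(0)$ with $\int_0^{x_0} f\ge x_0 f(0)$ and a center-of-mass estimate forces $x_0\le 1$. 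Once you insert such a bound, your proof is complete.
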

\begin{proof}
We see that 
\beq\lab{2.12} \int_{-1}^0 f(x)dx \leq  c_0, \eeq therefore,
\beq \int_{-1}^0 xf(x)dx \geq  - c_0. \eeq
Since $f$ has mean $0$,  \beq \int_0^\infty x f(x) dx \leq c_0.\eeq
This implies that \beq\lab{2.15} \int_1^\infty f(x)dx \leq c_0.\eeq
It follows from (\ref{2.10}), (\ref{2.12}) and (\ref{2.15}) that 
\beq \int_0^1 f(x)dx \ge 1 - 2c_0. \eeq
Therefore, $c_0 \geq 1 - 2 c_0$, and so \beq c_0 = \sup_{x \in \R} f(x) = f(x_0) \leq \frac{1}3.\eeq
Suppose that $x_0 > 0$. 
Then, by the log-concavity of $f$,
\beq \int_0^{x_0} f(x) dx & \geq & f(0)\int_0^{x_0} \exp\left((x/x_0) \ln \frac{f(x_0)}{f(0)}\right)dx\\
                                           & = & f(0)\left( \frac{\exp\left((x/x_0) \ln \frac{f(x_0)}{f(0)}\right) x_0}{ \ln \frac{f(x_0)}{f(0)}}\Big|_0^{x_0}\right)\\
                                           & = & \left(\frac{x_0 f(0)}{ \ln \frac{f(x_0)}{f(0)}}\right)\left(\frac{f(x_0)}{f(0)} - 1\right).\eeq
                                           This implies that $x_0 \leq \frac{\ln f(x_0) - \ln f(0)}{f(x_0) - f(0)}.$ 
                                                          
As $f(0) \leq f(x_0)$, we see that by log-concavity of $f$, 
\beq \sup_{x \in[-1,0] } f(x) = f(0) = \inf_{x \in [0, x_0]} f(x). \eeq
 Since $f$ has zero mean, this implies that \beq x_0 \leq 1.\eeq
                                           
Since $f$ is monotonically decreasing on $[x_0, x_0 + 6]$, $f$ attains its minimum on this interval at $x_0 + 6$.
Therefore $f(x_0 + 6) \leq \frac{1}{6}.$ 
Again, by log-concavity, for all $x \geq x_0 + 6$,

\beq f(x) \leq \frac{2^{-\frac{x -( x_0 + 6)}{6}}}{6} \leq C \exp\left(- \left(\frac{\ln 2}{6}\right) x\right).\eeq 
\end{proof}

\begin{lem}\lab{lem:lemma1}
If $g$ is chosen uniformly at random from $P_n(s)$,   \beq \E \|g\|^p_{L_p^2} \leq K^p (ps_2)^p n^2,\eeq where $K$ is a universal constant.
\end{lem}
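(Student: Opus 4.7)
The main idea is to reduce the claim, via translation invariance and linearity of expectation, to a one-dimensional tail bound on the distribution of a single second difference, which is already almost in hand from Lemma~\ref{lem:5.4-22nov}. Expanding the $p$-th power of the seminorm and summing,
$$\E\|g\|_{L_p^2}^p \;=\; \sum_{v \in V(\T_n)} \sum_{i=0}^{2} \E|D_i g(v)|^p.$$
Because the torus $\T_n$ acts transitively on itself by translations and this action preserves both $P_n(s)$ and Lebesgue measure, $\E|D_i g(v)|^p$ is independent of $v$, so the sum collapses to $n^2 \sum_{i=0}^2 \E|D_i g(v_0)|^p$ for an arbitrary fixed vertex $v_0$. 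It therefore suffices to show $\E|D_i g(v_0)|^p \leq K_0^p(s_2\,p)^p$ for each $i$.

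Fix $i$ and rescale: let $X := -D_i g(v_0)/s_i$, with density $f$ as in the paragraph preceding Lemma~\ref{lem:5.4-22nov}. Three facts are at my disposal: (i) $f$ is log-concave, since it is the density of a linear image of the uniform probability measure on the convex body $P_n(s)$ and Pr\'ekopa's theorem (Theorem~\ref{thm:prekopa}) is applicable; (ii) $X \geq -1$ almost surely, because the defining constraints of $P_n(s)$ include $D_i g(v_0) \leq s_i$; (iii) $\E X = 0$, which was recorded before Lemma~\ref{lem:5.4-22nov}. Lemma~\ref{lem:5.4-22nov} now supplies the subexponential upper tail
$$f(x) \;\leq\; C\exp\!\left(-\tfrac{\ln 2}{6}\, x\right) \qquad (x \geq x_0+6),$$
with $x_0 \leq 1$ and $C$ a universal constant.

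From here the bound on $\E|X|^p$ is a routine split. Write
$$\E|X|^p \;=\; \int_{-1}^{7} |x|^p f(x)\, dx \;+\; \int_{7}^{\infty} x^p f(x)\, dx.$$
The first integral is at most $7^p$ because $f$ is a probability density; for the second, the subexponential tail gives
$$\int_{7}^{\infty} x^p f(x)\, dx \;\leq\; C\int_{0}^{\infty} x^p e^{-(\ln 2/6)x}\, dx \;=\; C\,p!\,\left(\tfrac{6}{\ln 2}\right)^{p+1}.$$
Using $p! \leq p^p$, both contributions are bounded by $K_1^p p^p$ for a universal constant $K_1$. Therefore $\E|D_i g(v_0)|^p = s_i^p\,\E|X|^p \leq (K_1 s_i p)^p \leq (K_1 s_2 p)^p$ since $s_i \leq s_2$. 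Summing over the three values of $i$ absorbs a factor of $3$ into a renamed universal constant $K$, yielding the stated inequality $\E\|g\|_{L_p^2}^p \leq K^p(s_2 p)^p n^2$.

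There is no real obstacle here once Lemma~\ref{lem:5.4-22nov} is in place: the only substantive input is the log-concavity of the one-dimensional marginal (from Pr\'ekopa) combined with zero mean and the lower bound $X \geq -1$, which together force a universal subexponential tail, and translation invariance on $\T_n$ handles the sum over $v$.
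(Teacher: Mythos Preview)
Your proof is correct and follows essentially the same approach as the paper: both reduce via translation invariance and linearity of expectation to a single $D_i g(v_0)$, invoke Lemma~\ref{lem:5.4-22nov} for the subexponential tail of $X = -D_i g(v_0)/s_i$, and then convert this to a bound on $\E|X|^p$ of the form $(K_1 p)^p$. The only difference is that the paper phrases the last step as ``$\|X\|_{\psi_1}$ is bounded, hence $\E|X|^p \leq (C p)^p$'' by citing Vershynin, whereas you carry out the moment computation directly by splitting the integral at $x=7$; this is a harmless and slightly more self-contained variant of the same argument.
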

\begin{proof}
By Lemma~\ref{lem:5.4-22nov}, we see that $\frac{(-1) D_i g(v)}{s_i}$ is subexponential and $$\left\|\frac{(-1) D_i g(v)}{s_i}\right\|_{\psi_1} < C_2,$$ for some universal constant $C_2$.
Recalling that $s_0 \leq s_1 \leq s_2$, the lemma follows by the linearity of expectation and the fact (see Proposition 2.7.1 of \cite{Vershynin}) that the $p^{th}$ moments of a subexponential random variable $X$ satisfy \beq \E|X|^p \leq (C_2p)^p.\eeq for a universal constant $C_2$.
\end{proof}

We use this to derive the following.

\begin{lem}
If $g$ is chosen uniformly at random from $P_n(s)$, there is a universal constant $C_2$ such that for any $\de \in (0, e^{-1}),$
\beq \p\left[\|g\|_{L_2^2}  \geq C_2 n \ln \de^{-1}\right] \leq \de.\eeq
\end{lem}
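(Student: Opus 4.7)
The plan is to deduce the desired tail bound from the $p$-th moment estimate provided by Lemma~\ref{lem:lemma1} via Markov's inequality, with a careful optimization of $p$ in terms of $\de$. First I would relate the discrete $L_2^2$ norm to the discrete $L_p^2$ norm for an integer $p \geq 2$ using H\"{o}lder's inequality on the finite-dimensional vector $(D_0g, D_1g, D_2g) \in \R^{3n^2}$: for any $v \in \R^N$ and $p \geq 2$, $\|v\|_2 \leq N^{1/2 - 1/p}\|v\|_p$, and applied to our case this gives $\|g\|_{L_2^2} \leq (3n^2)^{1/2 - 1/p}\|g\|_{L_p^2}$.

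Next I would invoke Lemma~\ref{lem:lemma1} together with Markov's inequality: for any $u > 0$,
\beq \p\left[\|g\|_{L_p^2} \geq u\right] \;\leq\; \frac{\E\|g\|_{L_p^2}^p}{u^p} \;\leq\; \frac{(Kps_2)^p n^2}{u^p}. \eeq
Combining this with the H\"{o}lder step and setting the threshold $u := C_2\, n^{2/p}\, 3^{1/p - 1/2}\,\ln\de^{-1}$, the event $\{\|g\|_{L_2^2} \geq C_2 n \ln\de^{-1}\}$ is contained in $\{\|g\|_{L_p^2} \geq u\}$, whose probability is at most
\beq \frac{(Kps_2)^p n^2 \cdot 3^{p/2 - 1}}{C_2^p\, n^2\, (\ln\de^{-1})^p} \;=\; \frac{1}{3}\left(\frac{K\sqrt{3}\,p\,s_2}{C_2\,\ln\de^{-1}}\right)^p. \eeq

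Finally I would optimize by taking $p = \lceil \ln\de^{-1}\rceil$, which is a legitimate integer $\geq 1$ since $\de \leq e^{-1}$. With this choice the bound collapses to $\tfrac{1}{3}\bigl(K\sqrt{3}\,s_2/C_2\bigr)^p$ (up to an absorbed absolute constant from the ceiling), and choosing $C_2$ so that $C_2 \geq e K\sqrt{3}\, s_2$ makes this at most $\tfrac{1}{3}e^{-p} \leq \de$. I do not anticipate any serious obstacle: the heavy lifting, namely establishing the subexponential tail for each coordinate derivative $(-1)D_ig(v)/s_i$ (Lemma~\ref{lem:5.4-22nov}) and the moment bound (Lemma~\ref{lem:lemma1}), has already been done, so what remains is a standard Markov-plus-moment-optimization argument together with the elementary $\ell_2$-versus-$\ell_p$ comparison on a space of dimension $3n^2$.
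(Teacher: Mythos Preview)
Your proposal is correct and is essentially the same argument as the paper's: both use the comparison $\|g\|_{L_2^2} \le (3n^2)^{1/2-1/p}\|g\|_{L_p^2}$ (the paper phrases this as monotonicity of normalized $\ell_p$ norms), then apply Markov's inequality with the $p$-th moment bound from Lemma~\ref{lem:lemma1} and optimize by taking $p \approx \ln\de^{-1}$. The only cosmetic differences are that you apply Markov to $\|g\|_{L_p^2}^p$ and pull back via H\"older, whereas the paper bounds $\E[\|g\|_{L_2^2}^p]$ directly and applies Markov there, and you round $p$ up to an integer (unnecessary but harmless, since Lemma~\ref{lem:lemma1} holds for real $p$).
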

\begin{proof}
We see that for any $p \geq 2$, by the monotonically increasing nature of the $\ell_p$ norms as $p$ increases, for each $g \in P_n(s)$ \beq \left(\frac{\|g\|^2_{L_2^2}}{n^2}\right)^{\frac{1}{2}} \leq \left(\frac{\|g\|^p_{L_p^2}}{n^2}\right)^{\frac{1}{p}}.\eeq
This implies that 
\beq  \E\left(\frac{\|g\|^2_{L_2^2}}{n^2}\right)^{\frac{p}{2}} \leq \E \left(\frac{\|g\|^p_{L_p^2}}{n^2}\right) \leq (C_2 p)^p\eeq
An application of Markov's inequality gives us  \beq \p\left[\left(\frac{\|g\|^2_{L_2^2}}{C_2^2 p^2 n^2}\right)^{\frac{p}{2}}  \geq R^p\right] \leq R^{-p}.\eeq
Simplifying this, we have \beq \p\left[\|g\|_{L_2^2}  \geq C_2 p n R\right] \leq R^{-p}.\eeq
Setting $R$ to $e$, and absorbing it into $C_2$ and setting $p$ to $\ln \de^{-1}$, we now have
\beqs \p\left[\|g\|_{L_2^2}  \geq C_2 n \ln \de^{-1}\right] \leq \de.\eeqs
\end{proof}

\begin{defn}
Let $$\|g\|_W :=  \frac{1}{2}\left(\sum_{v \in V(\T_n)} \left(|A_0^2 g(v)|^2 + |A_2^2 g(v)|^2\right)\right)^{\frac{1}{2}}.$$
\end{defn}

\begin{lem}\lab{lem:2.23-sep-7-2020}
Let $g \in P_n(s)$. Suppose that $\|g\|_{L^2_2} \leq C_2n \ln \de^{-1}.$ 
Then, 
$\|g\|_W \leq 2 \|g\|_{L_2^2} \leq 2C_2n \ln \de^{-1}.$
\end{lem}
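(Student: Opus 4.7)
The plan is to reduce the $W$-seminorm to the discrete Sobolev seminorm by factoring each single-direction second difference $A_i^2$ as a linear combination (with shifts) of mixed second differences $A_j A_k$ with $j \neq k$, and then identifying these mixed second differences with the Hessians $D_\ell$ up to a shift on the torus.

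First, directly from the definitions of $A_0,A_1,A_2$ in equation (\ref{eq:A}), I would record the telescoping identity
\[
f(v_1,v_2)-f(v_1-1,v_2-1) = \bigl[f(v_1,v_2)-f(v_1,v_2-1)\bigr] + \bigl[f(v_1,v_2-1)-f(v_1-1,v_2-1)\bigr],
\]
which rewrites, on the torus $\T_n$, as an operator identity of the form $A_0 = A_1 - \sigma A_2$ for an appropriate unit shift $\sigma$ on $\R^{V(\T_n)}$ (and symmetrically $A_2 = A_1 - \sigma' A_0$ for some shift $\sigma'$). Every such shift is an $\ell_2$ isometry and commutes with each $A_i$.

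Second, I would compose:
\[
A_0^2 = A_0(A_1-\sigma A_2) = A_0 A_1 - \sigma\,(A_0 A_2).
\]
The mixed operators $A_0A_1$, $A_0A_2$, $A_1A_2$ are the three second-order differences on unit rhombi of $\L$, taken along two \emph{distinct} edge-directions. Inspection of Figure~3.2 (``We factorize the second order $D_i$ into first order operators $A_i$'') shows that, up to composition with a fixed shift on $\T_n$, each of $A_0A_1$, $A_0A_2$, $A_1A_2$ coincides with one of the Hessian operators $D_0,D_1,D_2$ appearing in the definition of $\|\cdot\|_{L^2_2}$. Consequently, using shift-invariance of $\|\cdot\|_2$ together with the triangle inequality,
\[
\|A_0^2 g\|_2 \le \|A_0 A_1 g\|_2 + \|\sigma A_0 A_2 g\|_2 = \|D_{i_1} g\|_2 + \|D_{i_2} g\|_2 \le 2\,\|g\|_{L_2^2},
\]
for suitable indices $i_1,i_2\in\{0,1,2\}$, and analogously $\|A_2^2 g\|_2 \le 2\,\|g\|_{L_2^2}$.

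Finally, combining these two inequalities,
\[
\|g\|_W = \tfrac12\sqrt{\|A_0^2 g\|_2^2 + \|A_2^2 g\|_2^2} \le \tfrac12\sqrt{4\|g\|_{L_2^2}^2+4\|g\|_{L_2^2}^2} = \sqrt 2\,\|g\|_{L_2^2} \le 2\,\|g\|_{L_2^2},
\]
and the hypothesis $\|g\|_{L_2^2}\le C_2 n\ln\delta^{-1}$ yields the claimed bound. The only thing requiring care is the bookkeeping that correctly pairs $A_jA_k$ with the matching $D_\ell$ on $\T_n$ (including the compensating shift $\sigma$); once that correspondence is fixed from the definitions in (\ref{eq:A}) and the rhombus list $E_0,E_1,E_2$, the rest is just the triangle inequality and isometry of translations on the torus.
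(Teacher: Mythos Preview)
Your argument is correct and follows essentially the same route as the paper: both proofs rest on the identity that each single-direction second difference $A_i^2$ is, up to a torus shift, the sum of two rhombus Hessians $D_j+D_k$. The paper writes this identity directly (obtaining $|A_0^2 g|^2=|D_0 g+D_1 g|^2$ and $|A_2^2 g|^2=|D_1 g+D_2 g|^2$ pointwise) and then applies $(a+b)^2\le 2(a^2+b^2)$, whereas you derive the same decomposition by first telescoping $A_0=A_1-\sigma A_2$ at the first-order level and then composing, finishing with the triangle inequality on $\ell_2$-norms; the resulting constant $2$ is the same.
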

\begin{proof}
We see that 
 \beq \frac{\|g\|_W}{2} & = &  \frac{1}{2}\left(\sum_{v \in V(\T_n)} \left(|A_0^2 g(v)|^2 + |A_2^2 g(v)|^2\right)\right)^{\frac{1}{2}}\\
& = &  \left(\frac{1}{4}\sum_{v \in V(\T_n)} \left(|D_0 g(v) + D_1 g(v)|^2 + |D_1 g(v) + D_2 g(v)|^2\right)\right)^{\frac{1}{2}}\\
& \leq &  \left(\sum_{v \in V(\T_n)} \left(|D_0 g(v)|^2 + |D_1 g(v)|^2 + |D_2 g(v)|^2\right)\right)^{\frac{1}{2}} \\  & = & \|g\|_{L_2^2}.\eeq
Therefore, we have 
$\|g\|_W \leq 2 \|g\|_{L_2^2} \leq 2C_2n \ln \de^{-1}.$
\end{proof}
\begin{lem}\lab{lem:5.8-Nov-2020}
Let $g$ be sampled from the uniform measure on $P_n(s)$. Then,  $$\p\left[\|g\|_\infty >  \left(\frac{\a \log n}{n}\right) \sqrt{\E \|g\|_2^2}\right] < n^{-c \a}.$$
\end{lem}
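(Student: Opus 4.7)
The plan is to reduce the statement to a one-dimensional tail bound for log-concave marginals, combined with a union bound over the $n^2$ vertices. First I would exploit the translation symmetry of $\T_n$: for any $w\in V(\T_n)$, the shift $T_w g(\cdot) = g(\cdot - w)$ preserves both the constraints $\sum_v g(v)=0$ and $\nabla^2 g \preccurlyeq s$, and it preserves Lebesgue measure on $\R^{V(\T_n)}$. Hence it is a measure-preserving symmetry of the uniform distribution on $P_n(s)$, so $\E g(v)$ and $\E g(v)^2$ are both independent of $v$. Combined with $\sum_v g(v)=0$, this forces $\E g(v)=0$ and, setting $\sigma^2 := \E g(v)^2$, gives $\E\|g\|_2^2 = n^2\sigma^2$. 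Thus the bound to be proved is equivalent to
\[
\p\bigl[\|g\|_\infty > \alpha \sigma \log n\bigr] < n^{-c\alpha}.
\]

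Next I would note that, since the uniform measure on the convex polytope $P_n(s)$ is log-concave, Pr\'ekopa's Theorem~\ref{thm:prekopa} implies that each one-dimensional marginal $g(v)$ has a log-concave density $f_v$ on $\R$. This density has mean $0$ and variance $\sigma^2$. For centered log-concave densities on $\R$ there is a classical subexponential tail estimate: there exist universal $C,c>0$ such that
\[
\p\bigl[|g(v)| > t\sigma\bigr] \leq C e^{-c t} \qquad \text{for all } t\geq 1.
\]
This can be obtained by essentially the same argument as in Lemma~\ref{lem:5.4-22nov} (the log-concavity of $f_v$ forces $\sup f_v \leq C/\sigma$ from the variance, and then log-concavity gives the exponential decay past a few standard deviations); alternatively, it is a standard consequence of Borell's lemma on log-concave measures. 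I would record this as a short auxiliary lemma, essentially rerunning the $x_0$-comparison of Lemma~\ref{lem:5.4-22nov} but in the rescaled variable $g(v)/\sigma$.

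Finally, a union bound over the $n^2$ vertices of $\T_n$ gives
\[
\p\bigl[\|g\|_\infty > \alpha \sigma \log n\bigr] \;\leq\; n^2 \cdot C e^{-c\alpha \log n} \;=\; C n^{2-c\alpha},
\]
which for $\alpha$ larger than a universal constant is at most $n^{-c'\alpha}$ after relabelling $c$. The main (minor) obstacle is justifying the subexponential marginal bound with an explicit, universal rate constant without relying on external results; I would handle this by reproducing the short log-concavity argument used to prove Lemma~\ref{lem:5.4-22nov}, using that $\E g(v)=0$ and $\E g(v)^2 = \sigma^2$ force $\sup f_v \lesssim 1/\sigma$ and then decay like $f_v(x) \leq (C/\sigma)\,2^{-|x|/(C\sigma)}$. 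No further structural information about $P_n(s)$ beyond translation symmetry and convexity is needed.
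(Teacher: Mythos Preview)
Your proposal is correct and follows essentially the same route as the paper's proof: translation symmetry gives $\E g(v)=0$ and $\E g(v)^2=\E\|g\|_2^2/n^2$, Pr\'ekopa gives log-concave marginals, the standard subexponential tail for centered log-concave densities (with $\psi_1$ norm comparable to the standard deviation) controls each $g(v)$, and a union bound over the $n^2$ vertices finishes. The paper's proof is much terser and invokes the $\psi_1$ bound directly rather than spelling out the Lemma~\ref{lem:5.4-22nov}-style argument, but the content is the same.
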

\begin{proof} %We recall the fact (see Proposition 2.7.1 of \cite{Vershynin}) that the $p^{th}$ moments of a subexponential random variable $X$ satisfy \beq \E|X|^p \leq (C_2p)^p.\eeq for a universal constant $C_2$.
The density  $\rho$ of $g(v)$ for a fixed vertex $v$ is logconcave and mean $0$. This density  is identical for each vertex $v$ by symmetry. It follows from the Chebychev inequality that the $\psi_1$ norm of the corresponding  random variable is at most $C \frac{\sqrt{\E\|g\|_2^2}}{n}.$ The lemma follows the from the exponential tail decay and the zero mean property of $\rho$.
\end{proof}
\subsection{Studying the fluctuations using characters}

\begin{defn}For $(\i, \j) \in (\Z/n\Z)\times(\Z/n\Z)$, and $\omega_n = \exp(2\pi\sqrt{-1}/n),$ let $\psi_{\i \j}$ be the character of  $(\Z/n\Z)\times(\Z/n\Z)$ given by $\psi_{\i \j}(i, j) := \omega_n^{\i i + \j j}.$
\end{defn}

These span the eigenspaces of any translation invariant linear operator on $\mathbb{C}^{V(\T_n)}.$
Let $g$ be expressed as a linear combination of the characters over $\mathbb{C}$ as 
\beq g = \sum_{\i, \j} \theta_{\i \j}\psi_{\i \j},\eeq where, since $g \in \R^{V(\T_n)}$, we have $\theta_{\i \j} = \bar{\theta}_{-\i\,\, -\j}.$

\begin{lem}\lab{lem:2.11} Let $g \in P_n(s)$. Suppose that $\|g\|_{L^2_2} \leq C_2n \ln \de^{-1},$ and that $\|g\|_{2} \geq \eps_0 n^3.$
This implies that 
there exists $(k_0, \ell_0) \in \Z^2$ such that $(k_0^2 + \ell_0^2) \leq \frac{C_2\log \de^{-1}}{\eps_0}$ and 
 \beq |\theta_{k_0 \ell_0}| & \geq & \frac{c \eps_0 n^2}{\sqrt{C_2\log \de^{-1}}}.\lab{eq:2.67}\eeq
\end{lem}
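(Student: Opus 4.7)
The plan is to Fourier-analyze $g$ with respect to the characters $\psi_{k\ell}$, translate both hypotheses into spectral statements on the coefficients $\theta_{k\ell}$, and then pigeonhole a single low-frequency coefficient. The first and most technical step is to diagonalize the discrete Hessian operators $D_0, D_1, D_2$ on the character basis. Each $D_i$ is translation-invariant, so $D_i\psi_{k\ell} = \lambda_i(k,\ell)\psi_{k\ell}$, and a direct expansion of the defining rhombus Hessian $-f(a)+f(b)-f(c)+f(d)$ on the three rhombus types $E_0, E_1, E_2$ yields the explicit eigenvalues
\begin{align*}
\lambda_0(k,\ell) &= (\omega_n^k - 1)(1 - \omega_n^{k+\ell}),\\
\lambda_1(k,\ell) &= (\omega_n^\ell - 1)(1 - \omega_n^{-k}),\\
\lambda_2(k,\ell) &= (\omega_n^{-\ell} - 1)(1 - \omega_n^{-k-\ell}).
\end{align*}
Writing $k_n, \ell_n \in (-n/2, n/2]$ for the principal representatives of $k, \ell$ modulo $n$, the standard estimate $|\omega_n^m - 1| = 2|\sin(\pi m/n)| \geq 4|m_n|/n$ combined with the polar-coordinate identity
$$k^2(k+\ell)^2 + k^2\ell^2 + \ell^2(k+\ell)^2 \;=\; (k^2+\ell^2)^2\bigl(1+\tfrac{1}{2}\sin 2\theta\bigr)^2 \;\geq\; \tfrac{1}{4}(k^2+\ell^2)^2$$
(valid for $k = r\cos\theta$, $\ell = r\sin\theta$) produces the spectral lower bound
$$\sum_{i=0}^{2} |\lambda_i(k,\ell)|^2 \;\geq\; c\,\frac{(k_n^2 + \ell_n^2)^2}{n^4}$$
for every nontrivial character, with a universal $c > 0$.

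Next, I would translate both hypotheses through Plancherel. Since the characters are orthogonal with $\|\psi_{k\ell}\|_2^2 = n^2$, we have $\|g\|_2^2 = n^2 \sum_{k,\ell} |\theta_{k\ell}|^2$ and $\|g\|_{L_2^2}^2 = n^2 \sum_{k,\ell} |\theta_{k\ell}|^2 \sum_i |\lambda_i(k,\ell)|^2$. Thus $\|g\|_2 \geq \eps_0 n^3$ becomes $\sum |\theta_{k\ell}|^2 \geq \eps_0^2 n^4$, and combining $\|g\|_{L_2^2} \leq C_2 n \ln\delta^{-1}$ with the spectral bound of the first step yields
$$\sum_{k,\ell} (k_n^2 + \ell_n^2)^2 |\theta_{k\ell}|^2 \;\leq\; C\, C_2^2\, n^4 (\ln\delta^{-1})^2.$$
Choosing a cutoff $T := C' C_2 (\ln\delta^{-1})/\eps_0$ with $C'$ sufficiently large, Markov's inequality gives
$$\sum_{k_n^2 + \ell_n^2 > T} |\theta_{k\ell}|^2 \;\leq\; \frac{C\, C_2^2\, n^4 (\ln\delta^{-1})^2}{T^2} \;\leq\; \tfrac{1}{2} \eps_0^2 n^4,$$
so at least half of the Parseval mass sits in the disk $\{k_n^2 + \ell_n^2 \leq T\}$. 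This disk contains $O(T)$ lattice points, so pigeonhole produces some $(k_0, \ell_0)$ in it with $|\theta_{k_0 \ell_0}|^2 \geq c\, \eps_0^2 n^4 / T$, which combined with the value of $T$ yields the claimed estimate (up to the precise $\eps_0$-exponent, which gets absorbed into $c$).

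The routine ingredients are Parseval's identity and Markov's inequality; the only nontrivial calculation is the spectral one in the first paragraph. Computing the $\lambda_i(k,\ell)$ is a short exercise, but the lower bound $\sum_i |\lambda_i|^2 \geq c(k_n^2+\ell_n^2)^2/n^4$ rests on the algebraic identity above, which in polar coordinates collapses to the elementary inequality $(1+\tfrac{1}{2}\sin 2\theta)^2 \geq \tfrac{1}{4}$ — this is the conceptual crux of the argument, ensuring that no nontrivial character has all three of its Hessian eigenvalues simultaneously much smaller than the naive $(k_n^2+\ell_n^2)/n^2$ scale.
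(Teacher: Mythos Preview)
Your approach is correct and follows the same overall strategy as the paper: Plancherel on both the $\ell_2$ and Sobolev norms, a spectral lower bound of order $(k^2+\ell^2)^2/n^4$, Markov's inequality, and pigeonhole among the $O(T)$ low-frequency modes. The only substantive difference is in how the spectral bound is obtained. The paper does not diagonalize the $D_i$ directly; instead it passes through the auxiliary seminorm $\|g\|_W$ built from the pure coordinate second differences $A_0^2$ and $A_2^2$, observes $\|g\|_W \le 2\|g\|_{L_2^2}$ (Lemma~\ref{lem:2.23-sep-7-2020}), and gets eigenvalues proportional to $(k^4+\ell^4)/n^4$, so the inequality $k^4+\ell^4 \ge \tfrac12(k^2+\ell^2)^2$ replaces your polar-coordinate identity. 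This buys the paper a cleaner argument, because $A_0^2$ and $A_2^2$ depend on $k$ and $\ell$ separately and so there is no wraparound issue.

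By contrast, your spectral bound via the polar identity implicitly uses $k_n+\ell_n$ where the eigenvalue actually involves $(k+\ell)_n$, and these differ when $|k_n+\ell_n|>n/2$. The identity you wrote is correct for real $k,\ell$, but the step ``combined with the polar-coordinate identity produces the spectral lower bound'' needs a short extra case analysis for this high-frequency wraparound regime (e.g.\ note that if $\max(|k_n|,|\ell_n|)>n/4$ then one of the $|\lambda_i|$ is already bounded below by a constant). This is easily patched and does not affect the outcome, but as written it is a small gap that the paper's $W$-norm route sidesteps entirely. Your final remark about the $\eps_0$-exponent is well taken: both arguments in fact produce $|\theta_{k_0\ell_0}|\ge c\,\eps_0^{3/2}n^2/\sqrt{C_2\log\delta^{-1}}$, and the paper's stated $\eps_0$ appears to drop a factor of $\eps_0^{1/2}$ in the last displayed line of its proof.
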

\begin{proof}
By the orthogonality of the characters,
\beq \|g\|_2^2 = \sum_{k, \ell} |\theta_{k\ell}|^2 \|\psi_{k\ell}\|_2^2 = n^2\sum_{k, \ell} |\theta_{k\ell}|^2 . \eeq

Also,
\beq\|g\|_{2} \geq  \eps_0 n^3.\eeq

Therefore, \beq \sum_{k, \ell} |\theta_{k\ell}|^2  \geq  \eps_0^2 n^4.\lab{eq:3.45}\eeq
By virtue of the fact that $A_0$ and $A_1$ commute with translations of the torus $\T_n$,
\beq (2\pi)^{-4}\|g\|_W^2 = \sum_{k, \ell} |\theta_{k\ell}|^2 \left(\frac{k^4 + \ell^4}{n^4}\right) \|\psi_{k\ell}\|_2^2 \geq\left(\frac{1}{2}\right)\sum_{k, \ell} |\theta_{k\ell}|^2 \left(\frac{k^2 + \ell^2}{n}\right)^2. \eeq

Therefore, by Lemma~\ref{lem:2.23-sep-7-2020}, we see that \beq \sum_{k, \ell} |\theta_{k\ell}|^2\left({k^2 + \ell^2}\right)^2  \leq C C_2^2n^4\left(\log \de^{-1}\right)^2,\eeq
%and so there exist $k$ and $\ell$ such that \beq  |\theta_{k\ell}|\left({k^2 + \ell^2}\right)  \leq C %C_2n\left(\log \de^{-1}\right),\eeq
We use (\ref{eq:3.45}) to get 
\beq \frac{\sum_{k, \ell} |\theta_{k\ell}|^2 \left({k^2 + \ell^2}\right)^2}{\sum_{k, \ell} |\theta_{k\ell}|^2} \leq \frac{C_2^2 \left(\log \de^{-1}\right)^2}{ \eps_0^2}.\eeq
Defining \beq \mu_{k\ell} :=  \frac{|\theta_{k\ell}|^2}{\sum_{k, \ell} |\theta_{k\ell}|^2},\eeq
and $X$ to be the random variable that takes the value $(k, \ell) \in \Z^2$ with probability $\mu_{k\ell}$,
we see that \beq \p\left[\|X\|_2^4 \leq  \frac{C_2^2\left(\log \de^{-1}\right)^2}{ \eps_0^2}\right]\geq \p\left[\|X\|_2^4 \leq 4 \E \|X\|_2^4\right] \geq  \frac{3}{4}.\eeq

It follows that there exists $(k_0, \ell_0)$ such that \beq (k_0^2 + \ell_0^2) \leq \frac{C_2\log \de^{-1}}{\eps_0}\lab{eq:change-22-11-2020}\eeq and  \beq \mu_{k_0 \ell_0} \geq \frac{c\eps_0}{C_2\log \de^{-1}},\eeq (since the probability is mostly distributed among the few $(k_0, \ell_0)$ that satisfy (\ref{eq:change-22-11-2020}).  This implies that 
there exists $(k_0, \ell_0) \in \Z^2$ such that $(k_0^2 + \ell_0^2) \leq \frac{C_2\log \de^{-1}}{\eps_0}$ and 
 \beq |\theta_{k_0 \ell_0}| & = & \sqrt{\mu_{k_0\ell_0} \sum_{k, \ell} |\theta_{k\ell}|^2}\\ & \geq &  \sqrt{\left(\frac{c\eps_0}{C_2\log \de^{-1}}\right) (\eps_0 n^4)}\\ & = & \frac{c \eps_0 n^2}{\sqrt{C_2\log \de^{-1}}}.\eeq
\end{proof}

In this section, we use $m$ to denote $n^2-1$, the dimension of $P_n(s)$.
\begin{lem}\lab{lem:2.14}
Let $g \in P_n(s)$. 
Let $g$ be expressed as a linear combination of the characters over $\mathbb{C}$ as 
\beqs g = \sum_{\i, \j} \theta_{\i \j}\psi_{\i \j},\eeqs where, since $g \in \R^{V(\T_n)}$, we have $\theta_{\i \j} = \bar{\theta}_{-\i\,\, -\j}.$ Let $K$ be any convex set in the space of functions $\R^{V(\T_n)}$ that is invariant under translations of the domain, that is, the torus $\T_n$. Then, for any $(k_0, \ell_0) \in (\Z/n\Z)^2$, 
\beq \left|P_n(s) \cap K \cap B_\infty(g, \eps_{0.5} n^2)\right| \leq   \left|P_n(s) \cap K \cap B_\infty\left( \Re\left(\theta_{k_0\ell_0}\psi_{k_0\ell_0}\right), \eps_{0.5} n^2\right)\right|.\eeq 
In particular, we may choose $(k_0, \ell_0)$, from the conclusion of Lemma~\ref{lem:2.11}.
%\beq \left|P_n(s) \cap K \cap B_\infty(g, \eps_{0.5} n^2)\right| \leq   \left|P_n(s) \cap K \cap B_\infty\left( 0 , \eps_{0.5} n^2\right)\right|.\eeq
\end{lem}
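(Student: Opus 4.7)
The plan is to realize $\Re(\theta_{k_0 \ell_0}\psi_{k_0 \ell_0})$ as the convolution of $g$ with an explicit probability measure on $\T_n$, and then to use log-concavity of the relevant volume function to trade any point for a convex combination of its translates.

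First I would define
\[
\rho(v) := \frac{1}{n^2}\bigl(1 + \Re \psi_{k_0\ell_0}(v)\bigr), \qquad v \in V(\T_n).
\]
Since $|\Re \psi_{k_0\ell_0}(v)| \leq 1$ one has $\rho(v) \geq 0$, and an easy character sum gives $\sum_v \rho(v) = 1$, so $\rho$ is a probability measure on $V(\T_n)$. A short computation using orthogonality of characters, together with the fact that $\theta_{00}=0$ (because every element of $P_n(s)$ has zero mean), yields
\[
(\rho \ast g)(u) = \sum_{v \in V(\T_n)} \rho(v)\, g(u - v) = \Re\bigl(\theta_{k_0\ell_0} \psi_{k_0\ell_0}(u)\bigr).
\]
Writing $T_v$ for translation by $v$, this says exactly that $\rho \ast g = \sum_v \rho(v)\, T_v g$ is a convex combination of the translates of $g$.

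Next I would introduce the set function
\[
\Phi(h) := \bigl|\,P_n(s) \cap K \cap B_\infty(h, \eps_{0.5} n^2)\,\bigr|
\]
on the affine span of $P_n(s)$, and express it as the convolution $\mathbf{1}_{P_n(s) \cap K} \ast \mathbf{1}_{B}$, where $B$ is the (centrally symmetric, convex) intersection of the cube $[-\eps_{0.5} n^2, \eps_{0.5} n^2]^{V(\T_n)}$ with the zero-mean hyperplane. Both factors are indicators of convex sets, hence log-concave, so Prékopa's theorem (Theorem~\ref{thm:prekopa}) applied to the convolution gives log-concavity of $\Phi$. Moreover, since $K$, $P_n(s)$ and the $\ell_\infty$ metric are all invariant under translations of $\T_n$, for each $v$ the map $T_v$ is a volume-preserving bijection from $P_n(s) \cap K \cap B_\infty(g, \eps_{0.5} n^2)$ onto $P_n(s) \cap K \cap B_\infty(T_v g, \eps_{0.5} n^2)$, so $\Phi(T_v g) = \Phi(g)$ for every $v \in V(\T_n)$.

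Combining these ingredients, log-concavity of $\Phi$ applied to the convex combination $\rho \ast g$ gives
\[
\Phi(\Re(\theta_{k_0\ell_0}\psi_{k_0\ell_0})) = \Phi(\rho \ast g) \geq \prod_{v} \Phi(T_v g)^{\rho(v)} = \Phi(g)^{\sum_v \rho(v)} = \Phi(g),
\]
which is the claimed inequality; the argument works for any pair $(k_0,\ell_0)$, so specializing to the one supplied by Lemma~\ref{lem:2.11} is immediate. The only point that will require some care is the bookkeeping on the $(n^2-1)$-dimensional hyperplane carrying $P_n(s)$, so that the Lebesgue measure on that hyperplane, the convolution, and Prékopa's theorem are all applied consistently; once that is set up the argument is entirely routine.
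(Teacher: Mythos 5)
Your proof is correct and follows essentially the same route as the paper: the same averaging kernel $\rho = \frac{1+\Re\psi_{k_0\ell_0}}{n^2} = \frac{2+\psi_{k_0\ell_0}+\psi_{-k_0\,-\ell_0}}{2n^2}$, the same identity $\rho\ast g = \Re(\theta_{k_0\ell_0}\psi_{k_0\ell_0})$ (using the zero mean of $g$), and the same translation-invariance of $h \mapsto |P_n(s)\cap K\cap B_\infty(h,\eps_{0.5}n^2)|$. The only difference is in the final concavity step and is essentially cosmetic: where you invoke Pr\'{e}kopa to get log-concavity of $\mathbf{1}_{P_n(s)\cap K}\ast\mathbf{1}_{B}$ on the zero-mean hyperplane, the paper applies the Brunn--Minkowski inequality to the Minkowski sum $\bigplus_{v}\rho(v)\left(P_n(s)\cap K\cap B_\infty(T_v g,\eps_{0.5}n^2)\right)$ together with its inclusion in $P_n(s)\cap K\cap B_\infty(\rho\ast g,\eps_{0.5}n^2)$, which yields the same conclusion.
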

\begin{proof}
Note  by the orthogonality of characters of $\Z_n \times \Z_n$, that \beq g \ast \left(\frac{\psi_{k_0\ell_0} + \psi_{-k_0\, -\ell_0} + 2}{2 n^2}\right) & = & \left(\frac{1}{2}\right)\left(\theta_{k_0\ell_0}\psi_{k_0\ell_0} +\theta_{- k_0\, -\ell_0}\psi_{-k_0\, - \ell_0}\right)\nonumber\\
& = & \Re  \left(\theta_{k_0\ell_0}\psi_{k_0\ell_0}\right).\eeq

Also note that $\rho:= \left(\frac{\psi_{k_0\ell_0} + \psi_{-k_0\, -\ell_0} + 2}{2 n^2}\right)$ is a probability distribution supported on $V(\T_n)$.
For $x \in P_n(s)$, let $B_\infty(x, \eps_{0.5} n^2)$ denote the  $\ell_\infty$ ball with center $x$ and radius  $\eps_{0.5} n^2.$ Below, $\bigplus$ represents Minkowski sum.
For  $r$ points $x_1, \dots, x_r$ in $P_n(s)$ and any non-negative reals $\a_1, \dots, \a_r$ such that $\sum_i \a_i = 1$, let $x:= \sum_i \a_i x_i.$ By the Brunn-Minkowski inequality, the convexity of $P_n(s)\cap K $,

\beq\sum_i \a_i \left|P_n(s) \cap K \cap B_\infty(x_i, \eps_{0.5} n^2)\right|^\frac{1}{m} & \leq & \left|\bigplus_{i \in [r]}  \a_i \left(P_n(s) \cap K \cap B_\infty(x_i, \eps_{0.5} n^2)\right)\right|^{\frac{1}{m}}\nonumber\\
 & \leq &  \left|P_n(s) \cap K \cap B_\infty(x, \eps_{0.5} n^2)\right|^\frac{1}{m}.\nonumber\eeq

Suppose that $g \in P_n(s)$ and  $T_v g = g \ast\de_{v}$, where $\de_{v}:V(\T_n) \ra \R$ is the function that takes value $1$ on $v$ and value $0$ on all other points $v' \in V(\T_n)$.
Then, because $P_n(s) \cap K$ is left fixed by the action of the group $\Z_n \times \Z_n$ acting on $V(\T_n)$ by translation, we see that 
\beq  \left|P_n(s) \cap K \cap B_\infty(T_v g, \eps_{0.5} n^2)\right|  = \left|P_n(s) \cap K \cap B_\infty(g, \eps_{0.5} n^2)\right|.\eeq 
By the convexity of $P_n(s)\cap K $, 
\beq \bigplus_{v \in V(\T_n)}  \rho(v) \left(P_n(s) \cap K \cap B_\infty(T_v g, \eps_{0.5} n^2)\right) \subseteq P_n(s) \cap K \cap B_\infty(\rho \ast g, \eps_{0.5} n^2).\eeq

As a consequence, 
\beq\left|P_n(s) \cap K \cap B_\infty(g, \eps_{0.5} n^2)\right|^{\frac{1}{m}} & = & \sum_{v \in V(\T_n)}  \rho(v) \left|P_n(s) \cap K \cap B_\infty(T_v g, \eps_{0.5} n^2)\right|^\frac{1}{m}\nonumber \\ 
& \leq & \left|\bigplus_{v \in V(\T_n)}  \rho(v) \left(P_n(s) \cap K \cap B_\infty(T_v g, \eps_{0.5} n^2)\right)\right|^\frac{1}{m}\nonumber \\ 
& \leq &  \left|P_n(s) \cap K \cap B_\infty(\rho \ast g, \eps_{0.5} n^2)\right|^\frac{1}{m}\nonumber \\
& = &        \left|P_n(s) \cap K \cap  B_\infty\left( \Re\left(\theta_{k_0\ell_0}\psi_{k_0\ell_0}\right), \eps_{0.5} n^2\right)\right|^\frac{1}{m}         \nonumber                .\eeq

Therefore,
\beqs \left|P_n(s) \cap K \cap B_\infty(g, \eps_{0.5} n^2)\right| \leq   \left|P_n(s) \cap K \cap B_\infty\left( \Re\left(\theta_{k_0\ell_0}\psi_{k_0\ell_0}\right), \eps_{0.5} n^2\right)\right|.\eeqs
\end{proof}
\begin{defn}
For $f \in \R^{V(\T_n)}$, and $k \in \Z$, such that $k \geq 1$ let $$\|f\|_{\dot{\CC}^k} := \max_{r_1, \dots, r_k \in \{0, 2\}} \|A_{r_1} \dots A_{r_k} f\|_\infty.$$
\end{defn}
Suppose without loss of generality that $k_0 \geq \ell_0$. 
For the remainder of this paper, let $g =  \Re\left(\theta_{k_0\ell_0}\psi_{k_0\ell_0}\right).$

\begin{lem}\lab{lem:2.12} 
We have $$\|g\|_{\dot{\CC}^2} \leq C  s_2.$$
\end{lem}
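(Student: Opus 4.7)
The plan is to exploit the fact that $g$ is a single Fourier mode on $\T_n$, so each of the translation-invariant operators $A_0, A_2, D_0, D_1, D_2$ acts on $g$ by multiplication by an explicit scalar (up to a shift in the point of evaluation). Writing
\[
\alpha := |1 - \omega_n^{k_0}|, \qquad \beta := |1 - \omega_n^{\ell_0}|, \qquad \gamma := |1 - \omega_n^{k_0 + \ell_0}|,
\]
a direct expansion of the definitions of $D_0, D_1, D_2$ on the three orientation classes of unit rhombi (for instance, $D_0 \psi_{k_0\ell_0}(v) = \omega_n^{k_0 v_1+\ell_0 v_2}(\omega_n^{k_0}-1)(1-\omega_n^{k_0+\ell_0})$) shows that $D_i \psi_{k_0\ell_0}$ is $\psi_{k_0\ell_0}$ (at the appropriate base point) times a complex number $\mu_i$ of modulus $\alpha\gamma$, $\alpha\beta$, $\beta\gamma$ respectively, while $A_0\psi_{k_0\ell_0}$ and $A_2\psi_{k_0\ell_0}$ produce scalar factors of modulus $\beta$ and $\gamma$.

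Next I would use membership in $P_n(s)$ to bound $|\theta_{k_0\ell_0}|$. By the convolution argument in Lemma~\ref{lem:2.14}, where $\rho$ is a genuine probability measure on $V(\T_n)$, the function $g = \Re(\theta_{k_0\ell_0}\psi_{k_0\ell_0})$ inherits $\nabla^2 g \preccurlyeq s$ from the original element of $P_n(s)$. Evaluating this constraint on edges in $E_i(\T_n)$ gives, for every $v = (v_1, v_2) \in V(\T_n)$,
\[
\Re\bigl(\theta_{k_0\ell_0}\,\mu_i \cdot \omega_n^{k_0 v_1 + \ell_0 v_2}\bigr) \leq s_i.
\]
Since Lemma~\ref{lem:2.11} gives $k_0^2 + \ell_0^2 = O(1)$ while $n \to \infty$, the phase lattice $\{\omega_n^{k_0 v_1 + \ell_0 v_2}\}_v$ becomes $o(1)$-dense on the unit circle; hence the supremum of the real part attains $(1 - o(1))|\theta_{k_0\ell_0}\mu_i|$, and we obtain
\[
|\theta_{k_0\ell_0}|\,\alpha\gamma \leq C s_0, \qquad |\theta_{k_0\ell_0}|\,\alpha\beta \leq C s_1, \qquad |\theta_{k_0\ell_0}|\,\beta\gamma \leq C s_2.
\]

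Finally, $\|g\|_{\dot{\CC}^2}$ is the maximum over $r_1, r_2 \in \{0,2\}$ of $|\theta_{k_0\ell_0}|$ times one of $\beta^2, \beta\gamma, \gamma^2$. The mixed term is already bounded by $Cs_2$. For the pure terms I invoke the triangle inequality on the unit circle,
\[
\beta = |\omega_n^{k_0} - \omega_n^{k_0+\ell_0}| \leq |\omega_n^{k_0}-1| + |1-\omega_n^{k_0+\ell_0}| = \alpha + \gamma,
\]
and symmetrically $\gamma \leq \alpha + \beta$. Substituting yields
\[
|\theta_{k_0\ell_0}|\beta^2 \leq |\theta_{k_0\ell_0}|\alpha\beta + |\theta_{k_0\ell_0}|\beta\gamma \leq C(s_1 + s_2) \leq 2Cs_2,
\]
and analogously $|\theta_{k_0\ell_0}|\gamma^2 \leq 2Cs_2$, since $s_0 \leq s_1 \leq s_2$. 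Taking the maximum over the four combinations gives $\|g\|_{\dot{\CC}^2} \leq 2Cs_2$, which is the desired bound.

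The main obstacle is the one-sided-to-two-sided conversion in the second step: the polytope $P_n(s)$ constrains the Hessian only from above, so one cannot directly read off a modulus bound on $\theta_{k_0\ell_0}\mu_i$. The point is that because $g$ consists of a single Fourier mode, its Hessian is itself a real trigonometric polynomial of mean zero oscillating between $\pm|\theta_{k_0\ell_0}\mu_i|$; the one-sided upper bound therefore forces the two-sided modulus bound, up to the density loss of the phase lattice, and Lemma~\ref{lem:2.11}'s bound on $k_0^2+\ell_0^2$ is exactly what makes this loss a universal constant rather than something growing with $n$.
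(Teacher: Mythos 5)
Your proof is correct and is essentially the paper's own argument in expanded form: the paper's terse proof simply asserts that membership of $g=\Re\left(\theta_{k_0\ell_0}\psi_{k_0\ell_0}\right)$ in $P_n(s)$ (which holds via the convolution/averaging in Lemma~\ref{lem:2.14}) forces the bound, and your eigenvalue computation, one-sided-to-two-sided conversion via phase density, and the triangle inequality $\beta\le\alpha+\gamma$, $\gamma\le\alpha+\beta$ are exactly the details behind that assertion. One minor caveat: Lemma~\ref{lem:2.11} bounds $k_0^2+\ell_0^2$ by $C_2\log\de^{-1}/\eps_0$, which need not be $O(1)$ in the parameter regime of Theorem~\ref{thm:2}, but your density step only requires $\max(|k_0|,|\ell_0|)=o(n)$, which still holds there, so the argument is unaffected.
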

\begin{proof}
We see that, because  $g$ belongs to $P_n(s)$,  \beqs \|g\|_{\dot{\CC}^2} & \leq &   C \min_{r\in \{0, 1, 2\}} \min_{v \in V(\T_n)} D_r g(v),\\
& \leq & C s_2.\eeqs 
\end{proof}

\begin{lem} \lab{lem:2.125}
Let $f$ be chosen uniformly at random from $P_n(s)$. Let $r \in \{0, 1, 2\}$ then, \beqs \p\left[\|D_r( f)\|_\infty > \check{C}\log n\right] < n^{-c\check{C} + 2},\eeqs for some universal constant $c > 0$.
\end{lem}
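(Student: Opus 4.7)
The plan is a single-vertex exponential tail bound, amplified to a uniform bound by a union bound over the $n^2$ vertices of $\T_n$.

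First, I would fix a vertex $v \in V(\T_n)$ and study the marginal density of $D_r f(v)$ under the uniform measure on $P_n(s)$. Since $P_n(s)$ is a convex polytope (so its uniform measure is log-concave), the projection onto the one-dimensional subspace spanned by the random variable $D_r f(v)$ is log-concave by Prékopa's Theorem~\ref{thm:prekopa}. This is exactly the setting of Lemma~\ref{lem:5.4-22nov}: the density of $\frac{-D_r f(v)}{s_r}$ is log-concave, has mean $0$ (by the translation symmetry of $\T_n$ the argument after \eqref{eq:2.10} applies), and the constraints defining $P_n(s)$ force $\frac{-D_r f(v)}{s_r} \geq -1$ so the left tail is trivial. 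Consequently, Lemma~\ref{lem:5.4-22nov} applies and gives the exponential tail estimate
\[
\p\left[\,-D_r f(v) > t\,\right] \leq C\exp\!\left(-\frac{(\ln 2)\, t}{6\,s_r}\right) \qquad \text{for all } t > 0.
\]

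On the other side, the constraint $\nabla^2 f(e) \le s_r$ that defines $P_n(s)$ gives the deterministic bound $D_r f(v) \leq s_r \leq s_2$, so for $\check{C}\log n > s_2$ the event $\{D_r f(v) > \check{C}\log n\}$ is empty. Combining the two sides, for any sufficiently large $n$ and any $\check{C}$ with $\check{C}\log n \geq s_2$,
\[
\p\!\left[\,|D_r f(v)| > \check{C}\log n\,\right] \leq C\exp\!\left(-\frac{(\ln 2)\,\check{C}\log n}{6\,s_2}\right) = C\, n^{-c\check{C}},
\]
for a positive constant $c$ depending only on $s$.

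Now I would take a union bound over the $n^2$ vertices of $\T_n$:
\[
\p\!\left[\|D_r f\|_\infty > \check{C}\log n\right] \;\leq\; \sum_{v\in V(\T_n)} \p\!\left[|D_r f(v)| > \check{C}\log n\right] \;\leq\; C\, n^{2-c\check{C}}.
\]
Absorbing the constant $C$ into the exponent by slightly shrinking $c$ (valid once $\check{C}$ is large enough) gives the desired bound $n^{-c\check{C}+2}$. The only mild subtlety is making sure the density in Lemma~\ref{lem:5.4-22nov} applies verbatim to our marginal (mean zero, log-concave, with the correct normalization and support on a half-line after rescaling by $s_r$); this is essentially bookkeeping rather than a real obstacle.
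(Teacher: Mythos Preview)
Your proposal is correct and follows essentially the same route as the paper: log-concavity of the marginal of $D_r f(v)$ via Pr\'{e}kopa, mean zero by symmetry, support bounded above by $s_r$ from the defining constraints, exponential tail from Lemma~\ref{lem:5.4-22nov}, and a union bound over the $n^2$ vertices. The paper's proof is just a terse version of what you wrote.
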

\begin{proof}
For any fixed $v$, \beqs \E\left[ D_r f(v)\right] = 0.\eeqs and $D_r f (v)$ has a logconcave density by Prekopa-Leindler inequality, which by the constraints of the polytope, has a support contained in $(-\infty, s_r]$.
The Lemma follows from the exponential tail bound satisfied by a logconcave density, as shown in the proof of  Lemma~\ref{lem:lemma1} together with an application of the union bound.
\end{proof}

\begin{lem}\lab{lem:2.13} 
For any $f \in B_\infty(g, \eps_{0.5}n^2)$, we have \beq \left(\frac{\|f - g\|_{\dot{\CC}^1}}{s_2}\right)^2 \leq C \eps_{0.5}n^2. \eeq
\end{lem}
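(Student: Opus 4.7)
The plan is to prove this via a discrete Landau--Kolmogorov interpolation inequality of the form
\beqs \|h\|_{\dot{\CC}^1}^2 \leq C \|h\|_\infty \, \|h\|_{\dot{\CC}^2}, \eeqs
applied to $h := f - g$. The zeroth order factor is immediate: $\|h\|_\infty \leq \eps_{0.5} n^2$ from $f \in B_\infty(g, \eps_{0.5} n^2)$. The second order factor is bounded via the triangle inequality $\|h\|_{\dot{\CC}^2} \leq \|f\|_{\dot{\CC}^2} + \|g\|_{\dot{\CC}^2}$: Lemma~\ref{lem:2.12} supplies $\|g\|_{\dot{\CC}^2} \leq Cs_2$, while the bound on $\|f\|_{\dot{\CC}^2}$ comes from the implicit assumption (from the broader context of the paper's argument) that $f$ lies in the "tame" set $K$, whose defining condition controls $\|f\|_{\dot{\CC}^2}$ by a quantity $Cs_2$. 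Altogether $\|h\|_{\dot{\CC}^2} \leq Cs_2$.

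First I would establish the interpolation inequality by a one-dimensional slicing argument. Fix $v \in V(\T_n)$ and $r \in \{0, 2\}$, and examine the slice $\phi(k) := h(v + k e_r)$, where $e_r$ is the translation step associated with $A_r$. Setting $a_k := \phi(k+1) - \phi(k) = A_r h(v + k e_r)$, the hypothesis controls the second difference by $|a_k - a_{k-1}| \leq \|h\|_{\dot{\CC}^2}$, so iteratively $|a_k - a_0| \leq k \|h\|_{\dot{\CC}^2}$. Telescoping gives
\beqs \left|\sum_{j=0}^{k-1} a_j\right| = |\phi(k) - \phi(0)| \leq 2\|h\|_\infty, \eeqs
while also $\left|\sum_{j=0}^{k-1} a_j\right| \geq k|a_0| - k^2 \|h\|_{\dot{\CC}^2}$, yielding $|a_0| \leq 2\|h\|_\infty/k + k\|h\|_{\dot{\CC}^2}$. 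Optimizing over $k \asymp \sqrt{\|h\|_\infty / \|h\|_{\dot{\CC}^2}}$ (a choice which is at most $n$ since $\|h\|_\infty \leq \eps_{0.5}n^2$ and $\|h\|_{\dot{\CC}^2}$ is bounded away from zero for the interesting regime, hence the torus is large enough to accommodate the argument) gives $|a_0|^2 \leq C \|h\|_\infty \|h\|_{\dot{\CC}^2}$. Supremizing over $v$ and $r$ proves the interpolation claim.

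Combining these ingredients gives
\beqs \|h\|_{\dot{\CC}^1}^2 \leq C \cdot \eps_{0.5} n^2 \cdot s_2, \eeqs
and dividing by $s_2^2$, absorbing the factor $1/s_2 \leq 1/s_0 = 1/2$ into $C$, yields $(\|f-g\|_{\dot{\CC}^1}/s_2)^2 \leq C \eps_{0.5} n^2$ as desired. The main obstacle is the second order control: without the assumption that $f$ lies in $K$ (or some analogous tame set), $\|f\|_{\dot{\CC}^2}$ could be arbitrarily large and the stated bound would be false, so the lemma should be read as implicitly restricting $f$ to $P_n(s) \cap K$. One could alternatively replace the deterministic $\dot{\CC}^2$ bound by the high-probability bound $\|D_r f\|_\infty \leq \check{C} \log n$ of Lemma~\ref{lem:2.125} (at the cost of a polylogarithmic factor) and absorb this loss elsewhere.
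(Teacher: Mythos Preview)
Your Landau--Kolmogorov interpolation is essentially the paper's argument, but there is a real gap in how you control the second-order term. You claim $\|f\|_{\dot{\CC}^2}\le Cs_2$ via membership in the tame set $K$, but the tame set actually used in the paper (the complement of $\A_2$ in the proofs of Theorems~\ref{thm:1} and~\ref{thm:2}) only enforces $\|D_r f\|_\infty\le\check C\log n$, and with the paper's eventual choice $\check C=\sqrt{\eps_{0.5}}\,n/\log n$ this yields $\|f\|_{\dot{\CC}^2}=O(\sqrt{\eps_{0.5}}\,n)$, not $O(s_2)$. No natural hypothesis supplies the two-sided bound you need: membership in $P_n(s)$ only caps $D_r f$ from above, and your fallback via Lemma~\ref{lem:2.125} with constant $\check C$ loses a $\log n$ that the lemma as stated does not permit.

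The paper sidesteps this by observing that the interpolation only needs a \emph{one-sided} second-difference bound. The implicit hypothesis is $f\in P_n(s)$, whence $D_r f\le s_r$; and since $g=\Re(\theta_{k_0\ell_0}\psi_{k_0\ell_0})=\rho\ast g_{\mathrm{orig}}$ is a convex combination of torus translates of a point of $P_n(s)$, it too lies in $P_n(s)$, and being a pure sinusoid satisfies $\min_v D_r g(v)=-\max_v D_r g(v)\ge -s_r$. Hence $D_r(f-g)\le 2s_2$ and so $A_r^2(f-g)\le Cs_2$ one-sidedly. In your slicing notation this reads $a_k-a_{k-1}\le Cs_2$; choosing the walking direction so that $a_0\le 0$, one gets $\phi(k)-\phi(0)=\sum_{j<k}a_j\le k a_0+\tfrac{Cs_2}{2}k^2$, which at $k\asymp|a_0|/s_2$ dips by $\asymp|a_0|^2/s_2$, forcing $|a_0|^2\le Cs_2\|h\|_\infty$. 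No lower bound on $A_r^2 h$ is ever invoked. The missing idea is precisely this one-sidedness: the semiconcavity built into $P_n(s)$ already does the work you tried to outsource to $K$.
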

\begin{proof}
By Lemma~\ref{lem:2.12}, we see that for $r \in \{0, 1, 2\},$ $D_r(f -g) \leq C s_2,$ and hence $\forall r, \, A_r^2 (f-g) \leq 2 C s_2$. Let $v \in V(\T_n)$ be a vertex such that for some $r$, $|(A_r(f-g))(v)| \geq \|f - g\|_{\dot{\CC}^1}$. For all points $w$ along the direction that $A_r$ acts, there is an upper bound on the value of $(f - g)(w)$ given by a quadratic whose second derivative is equal to $C s_2$ and whose slope  at $v$ has magnitude $\|f - g\|_{\dot{\CC}^1}$. But this means that this upper bound must at some point take a value less than $-c\left(\frac{\|f - g\|_{\dot{\CC}^1}}{s_2}\right)^2$. This implies that $c\left(\frac{\|f - g\|_{\dot{\CC}^1}}{s_2}\right)^2 \leq \eps_{0.5} n^2,$ leading to the desired bound.
\end{proof}

\section{Upper bounds on the volumes of covering polytopes}

\subsection{Polytopes used in the cover}\lab{sec:cover}
We will map $V(\T_n)$ onto $(\Z/n\Z) \times (\Z/n\Z)$ via the unique $\Z$ module isomorphism that maps $1$ to $(1, 0)$ and $\omega$ to $(0, 1)$. 
Without loss of generality (due linearity under scaling by a positive constant), we will assume in this and succeeding sections that that \beq\lim_{n \ra \infty}|P_n(s)|^{\frac{1}{n^2-1}} = 1.\eeq
Let $\eps_0$ be a fixed positive constant. Suppose  $x \in P_n(s)$ satisfies \beq\|x\|_2 > \eps_0 n^3.\lab{eq:eps0}\eeq

%The following notation resembles that from Section~\ref{sec:2} except for changing $\tP_n(s)$ to $P_n(s)$.

Given $n_1|n_2$, the natural map from $\Z^2$ to $\Z^2/(n_1 \Z^2) = V(\T_{n_1})$ factors through $\Z^2/(n_2 \Z^2) =V(\T_{n_2})$. We denote the respective resulting maps from $V(\T_{n_2})$ to $V(\T_{n_1}) $ by $\phi_{n_2, n_1}$, from $\Z^2$ to $V(\T_{n_2})$ by $\phi_{0, n_2}$ and from $\Z^2$ to $V(\T_{n_1})$ by $\phi_{0, n_1}$.
Given a set of boundary nodes $\b \subseteq V(\T_n)$, and $ x_\b \in \R^{\b}$, we define $Q_{ \b}(x)$ to be the fiber polytope over $x_\b$, that arises from the projection map $\Pi_{\b}$ of $P_n(s)$ onto $\R^{\b}.$ Note that $Q_{\b}(x)$ implicitly depends on $s$. 

Given positive $\eps_0,  \dots, \eps_k$  we will denote by $\eps_{k+1}$, a positive constant whose value may depend on the preceding $\eps_i$  but not on any $\eps_r$ for $r>k$. 
%We will associate with $x$, a polytope $Q_n(\eps_1, s, x)$ containing $x$.   
Let $o \in V(\T_n)$ be an offset that we will use to define $\b$. 
In this paper we will deal exclusively with the situation when $\eps_1^{-1} \in \Z$.
%The polytope $Q_n(\eps_1, s, x)$ is defined as follows. 
Let $n_2$ be the largest multiple of $ \eps_1^{-1} $ that  is less or equal to $n$. Thus, $n_2 = \eps_1^{-1} \lfloor n\eps_1 \rfloor.$ Note that $n_2 +  \eps_1^{-1}   \geq n$. Let $$n_1 = n_2\eps_1.$$ 
%We remark that in Part I \cite{PartI}, we had instead defined $n_2$ using $\lfloor\eps_1^{-1}\rfloor + 1$, but this makes no difference to the correctness of the results stated here from Part I. In our case $\eps_1^{-1} \in \Z$, so this is just a matter of convenience.
\begin{defn}\lab{defn:b}
We define the set $\b_{1} \subseteq V(\T_{n_1})$ of ``boundary vertices" to be all vertices that are either of the form $(0, y)$ or $(1, y)$ or $(x, 0)$ or $(x, 1)$, where $x, y$ range over all of $\Z/(n_1 \Z)$. We define the set $\b_{2} \subseteq V(\T_{n_2})$ to be $\phi_{n_2, n_1}^{-1}(\b_{1}).$
\end{defn}
Let $\rho_0:V(\T_{n_2}) \ra \{0, \dots, n_2-1\}^2\subseteq \Z^2$ be the unique map with this range that satisfies $\phi_{0, n_2} \circ \rho_0 = id$ on $V(\T_{n_2})$. We embed $V(\T_{n_2})$ into $V(\T_{n})$ via  
  $\phi_{0, n} \circ \rho_0,$ and define $$\tilde {\b} := \left(\phi_{0, n}\circ \rho_0(\b_2)\right)\cup \left(V(\T_{n})\setminus (\phi_{0, n}(\{0, \dots, n_2-1\}^2))\right).$$  Thus, we have the following.
\begin{defn}\lab{def:bhat}
 The set $\tilde{\b}$ is the union of the image of $\b_2$ under $\phi_{0, n}\circ \rho_0,$ with the set $\widehat\b$ of vertices that do not belong to  $\phi_{0, n} (\{0, \dots, n_2-1\}^2)$.
\end{defn}
Finally we define $\b$ to be $\tilde{\b} + o$, \ie a translation of $\tilde{\b}$ by the offset $o$.
Given $\b$, define $(x_\b)_{quant}$ to be the closest point to $x_\b$, every coordinate of which is an  integer multiple of $\frac{1}{M}$. 

\begin{defn}\lab{def:6.1} We define the polytope $\tilde{Q}_n(\b, s, x)$ as the preimage of $(x_\b)_{quant} + [-\frac{1}{M}, \frac{1}{M}]^\b$ under the coordinate projection $\Pi_\b$ of $P_n(s)$ onto $\R^\b$.
\end{defn}
%\begin{defn}\lab{def:6.2} 
%Given an edge $e\in E_i(\T_n)$, we define the polytope $\bar{Q}_n(e, s)$ to be the polytope $\{y \in P_n(s)|\nabla^2y(e) \geq - n^{0.499}\}.$ %Note: Better probability bounds can be obtained from $\geq - n^{0.499}.$
%\end{defn}

%The following lemma is proved  in a manner analogous to  Part I, \cite{PartI}.
\begin{lem}\lab{lem:polytope_number}
For sufficiently large $n$, the total number of distinct polytopes $\tilde{Q}_n(\b, s, x)$ as $x$ ranges over all points in $P_n(s)$ is at most   $(Cn^2M)^{(8 \eps_1^{-1})n + 2}.$
\end{lem}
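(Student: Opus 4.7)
The plan is to reduce the counting of distinct polytopes to the counting of distinct values of $(x_\b)_{quant}$, since by Definition~\ref{def:6.1} the polytope $\tilde{Q}_n(\b,s,x)$ depends on $x$ only through $(x_\b)_{quant}$. The bound will then follow from a cardinality estimate on $\b$ combined with an a-priori $\ell_\infty$-bound on elements of $P_n(s)$.

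First I would bound $|\b|$. By Definition~\ref{defn:b}, $\b_1$ consists of two ``horizontal strips'' $\{(x,0),(x,1):x\in \Z/n_1\Z\}$ and two ``vertical strips'' $\{(0,y),(1,y):y\in \Z/n_1\Z\}$, so $|\b_1|\leq 4n_1$. Each vertex of $\b_1$ has exactly $(n_2/n_1)^2=\eps_1^{-2}$ preimages under $\phi_{n_2,n_1}$, so $|\b_2|\leq 4n_1\eps_1^{-2}=4n_2\eps_1^{-1}$. By Definition~\ref{def:bhat}, $\tilde{\b}$ also contains $\widehat\b = V(\T_n)\setminus\phi_{0,n}(\{0,\ldots,n_2-1\}^2)$, which has cardinality $n^2-n_2^2=(n-n_2)(n+n_2)\leq \eps_1^{-1}\cdot 2n$ because $n-n_2<\eps_1^{-1}$. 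Translation by $o$ preserves cardinality, so
\[
|\b|\;\leq\;4n_2\eps_1^{-1}+2n\eps_1^{-1}\;\leq\;6n\eps_1^{-1}\;\leq\;8n\eps_1^{-1}+2.
\]

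Next I would bound the range of each coordinate $x(v)$ for $v\in\b$ and $x\in P_n(s)$. The same lattice-path argument used in the proof of Lemma~\ref{lem:3-} shows that the slope of $x$ along any unit lattice step increases by at most $C$ at each step because $\nabla^2 x\preccurlyeq s$, so $x$ is $Cn$-Lipschitz. Combined with the mean-zero constraint defining $P_n(s)$, this yields $\|x\|_\infty \leq Cn^2$. Hence every coordinate of $x_\b$ lies in $[-Cn^2,Cn^2]$.

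Finally, I would count the possible quantizations. Each coordinate of $(x_\b)_{quant}$ is a multiple of $1/M$ in $[-Cn^2,Cn^2]$, giving at most $2Cn^2M+1\leq C'n^2M$ values per coordinate; independence across coordinates yields at most $(C'n^2M)^{|\b|}\leq (C'n^2M)^{8n\eps_1^{-1}+2}$ distinct vectors $(x_\b)_{quant}$, and therefore at most that many distinct polytopes $\tilde{Q}_n(\b,s,x)$ (some quantized points may correspond to no $x\in P_n(s)$, but this only strengthens the upper bound). There is no substantial obstacle here; the only point requiring care is the accounting in the estimate of $|\b|$, ensuring the contribution of the boundary layer $\b_2$ and the residual layer $\widehat\b$ together stay below $8n\eps_1^{-1}+2$ uniformly in $n$.
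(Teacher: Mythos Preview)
Your proof is correct and follows essentially the same approach as the paper: bound $|\b|$, bound $\|x\|_\infty$ via the Lipschitz property, and count lattice points in the quantization. Your accounting for $|\b|$ is in fact more detailed than the paper's, which simply asserts $|\b|\leq 8\eps_1^{-1}n$ without breaking down the contributions of $\b_2$ and $\widehat\b$.

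One small discrepancy worth noting: the paper's proof also multiplies by the $n^2$ possible choices of offset $o$ (recall $\b=\tilde\b+o$), and it is this factor of $n^2$ that the ``$+2$'' in the exponent is meant to absorb, via $n^2\leq (Cn^2M)^2$. You instead treat $\b$ as fixed and use the ``$+2$'' only as slack in your $|\b|$ estimate. Your reading matches the literal wording of the statement (``as $x$ ranges over\ldots''), but be aware that downstream the lemma is applied with the offset varying, so the paper's interpretation is the one actually needed.
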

%Note that $0 \in \b$, since $0 \not \in [n_2].$
\begin{proof}
The number of vertices in $\b$ is bounded above by $8 \eps_1^{-1}n$. Also, $x \in P_n(s)$ implies that $\|x\|_{\infty} < Cn^2$. The number of distinct points of the form $(x_\b)_{quant}$ can therefore be bounded above by $(Cn^2M)^{(8 \eps_1^{-1})n + 2}$ when $n$ is sufficiently large. Since the number of possible offsets is $n^2$, this places an upper bound of  $(Cn^2M)^{(8 \eps_1^{-1})n + 2}$ on the number of possible polytopes $\tilde{Q}_n(\b, s, x)$.
\end{proof}
%We will assume that $x \not\in \bigcup_{i,k} \bar{Q}_n(e_k^{(i)}, s)$, since the $3n$ such polytopes, by Theorem~\ref{thm:prekopa} and the consequent subexponentiality of $ |\nabla^2x(e)|,$ have a total mass that is less than $\exp(- n^{0.498})$ when $n$ is sufficiently large. 
%Note that for each $o \in V(\T_n)$, $\tilde{Q}(\tilde{\b} + o, s, x) \subseteq \tilde{Q}(\tilde{\b} + o, s(1 + \frac{4}{n^3}), x).$
%\subsection{Upper bound on $|\tilde{Q}_n(\tilde{\b} + o, s, x)|$}

In the remainder of this section, $s$ and $x$ and $\eps_1$ will be fixed, so the dependence of  various parameters on them will be suppressed.

\subsection{Bounding  $\left| (\tilde{Q}_n(\tilde{\b} + o) - x)\right|$ from above}\lab{ssec:appl_isop}

For $1 \leq i, j \leq \frac{n_2 }{n_1}$, and offset $o$, we define the $(i,j)^{th}$ square \beq\square_{ij}^o :=o + \phi_{0, n}\left(\left( \left[{(i-1)n_1} + 1, {in_1}\right]\times \left[{(j-1)n_1} + 1,  {jn_1}\right]\right)\cap \Z^2\right).\lab{eq:sqij}\eeq
%Note: the above line is correct: in Part I it was erroniously stated with $n_2/n_1$ in place of $n_1$.
We also define 
\beq\square^o :=o + \phi_{0, n}\left(\left( \left[ 1, n_2\right]\times \left[1,  n_2\right]\right)\cap \Z^2\right).\lab{eq:squaredef}\eeq
We note that the boundary vertices of each square 
$\square_{ij}^o$ are contained in $\b$. 
Let $\La_{ij}^o$ denote the orthogonal projection of $\R^{V(\T_n)}$ onto the subspace \beq A_{ij}^o := \left\{y\in \R^{\square_{ij}^o}\big|\sum_{k \in \square_{ij}^o}  y_k = 0\right\}.\eeq 
By abuse of notation, when necessary, we will identify the vertices in $\square_{ij}^o$ with the vertices in $V(\T_{n_1})$ in the natural way, and view $\La_{ij}^o$ as a projection of $\R^{V(\T_n)}$ onto the subspace \beq  \left\{y\in \R^{V(\T_{n_1})}\big|\sum_{k \in \square_{ij}^o}  y_k = 0\right\}.\eeq 
For any $z \in \tilde{Q}_n(\tilde{\b} + o) - x,$
the euclidean distance between $z$ and this subspace is less than $Cn^3$ by virtue of the upper bound of $Cn^2$ on the $l_\infty$ norm of $z$ and $x$. For sufficiently large $n$, we eliminate the $C$ and bound this euclidean distance from above by $n^4$. Therefore, for any fixed $o$, 
\beq n^{- \frac{4n_2^2}{n_1^2}}  \left|(\tilde{Q}_n(\tilde{\b} + o) - x)\right| & \leq &  \left|\prod\limits_{1 \leq i, j \leq \frac{n_2 }{n_1}} \La_{ij}^o (\tilde{Q}_n(\tilde{\b} + o) - x)\right| \\
& = &  \prod\limits_{1 \leq i, j \leq \frac{n_2 }{n_1}} \left| \La_{ij}^o (\tilde{Q}_n(\tilde{\b} + o) - x)\right|.\lab{eq:3.5} \eeq

%Let us suppose that $f$ is twice differentiable at $s$. This is true at almost all points $t \in \R_+^3$ due to the concavity of $f$ (since $f$ is a pointwise limit of functions $f_n$, each of which is concave by the Brunn-Minkowski theorem) and Alexandrov's theorem \cite{Alexandrov}. 

%\begin{thm}[Minkowski's theorem]

%\end{thm}

%By the existence and uniqueness of the solution to the Minkowski problem \cite{} under  conditions which are ... can be seen to be satisfied in our case, we know that for any fixed $w = (w_0, w_1, w_2)$ with positive coordinates, and any $n \geq 2$, ($n = 1$ corresponds to $P_n(t)$ being a point) there is a unique $t_n = t_n(w)$ for which $w = w^{(n)}(t_n).$

%Let us examine $T(\tilde{u}_a)$ assuming $\tilde{u}_a \not \in \C.$ 

We recall from (\ref{eq:2.2}) that
the anisotropic surface area of $L$ with respect to $K$, denoted $S_K(L)$, satisfies
\beqs S_K(L) \geq m|K|^{\frac{1}{m}} |L|^{\frac{m-1}{m}}.\eeqs 
Later, in Definition~\ref{def:tijo}, we will choose $t_{ij}^o$ carefully  depending on the restriction of $x$ to (a $1-$neighborhood of) $\square_{ij}^o$.
For $1 \leq i, j \leq \frac{n_2 }{n_1}$, let $P_{n_1}^{ij,o}( t_{ij}^o)$ be a copy of $ P_{n_1}( t_{ij}^o)$ in $\R^{\square_{ij}^o}$.
Taking $L_{ij}^o$ to be $ P_{n_1}^{ij,o}(t_{ij}^o)$  (note that $|P_{n_1}^{ij,o}(t_{ij}^o)| = |P_{n_1}(t_{ij}^o)|$), $K_{ij}^o$ to be $\La_{ij}^o (\tilde{Q}_n(\tilde{\b} + o) -x)$,  and $m_1 = n_1^2 -1$, this gives us 
$$m_1 |K_{ij}^o|^{\frac{1}{m_1}}|P_{n_1}(t_{ij}^o)|^{1 - \frac{1}{m_1}}  \leq {S_{K_{ij}^o}(L_{ij}^o)}.$$ 
We thus have 
\beq  |K_{ij}^o|^{\frac{1}{m_1}} \leq \frac{{S_{K_{ij}^o}(L_{ij}^o)}}{m_1 |P_{n_1}(t_{ij}^o)|^{1 - \frac{1}{m_1}} }.\eeq
%Recalling (\ref{eq:wn}),
%let \beqs \frac{1}{n_1^2} \left(\frac{\partial|P_{n_1}(t)|}{\partial t_0}, \frac{\partial|P_{n_1}(t)|}{\partial t_1}, \frac{\partial|P_{n_1}(t)|}{\partial t_2}\right) =: (w_0^{(n_1)}(t), w_1^{(n_1)}(t), w_2^{(n_1)}(t)) = w^{(n_1)}(t).\eeqs
%Let us set 

Thus, 
\beqs \prod\limits_{1 \leq i, j \leq \frac{n_2 }{n_1}} \left(\left| \La_{ij}^o (\tilde{Q}_n(\tilde{\b} + o) - x)\right|  |P_{n_1}(t_{ij}^o)|^{m_1 - 1} \right) & \leq & 
 \prod\limits_{1 \leq i, j \leq \frac{n_2 }{n_1}} \left(  \frac{S_{K_{ij}^o}(L_{ij}^o)}{m_1}\right)^{m_1}.\eeqs

This, by (\ref{eq:3.5}) implies that 
 \beq \left( n^{- \frac{4n_2^2}{n_1^2}} \left|(\tilde{Q}_n(\tilde{\b} + o) - x)\right|\right) & \leq & 
 \prod\limits_{1 \leq i, j \leq \frac{n_2 }{n_1}} \frac{\left(\frac{S_{K_{ij}^o}(L_{ij}^o)}{m_1}\right)^{m_1}}{ |P_{n_1}(t_{ij}^o)|^{m_1 - 1}  }.\lab{eq:3.6}
\eeq
%& \leq & 
%\prod_{o \in V(\T_n)} \prod\limits_{1 \leq i,j \leq \frac{n_2}{n_1}} \left(\frac{S_{K_{ij}^o}(L_{ij}^o)}{m(1 - C\eps_2)^{m - 1}}\right)
%.\eeqs

Recall from Subsection~\ref{sec:prelim} that for $a, b, c$ and $d$ the vertices of a lattice rhombus of side $1$ such that   $ a - d = -z\omega^2,$ $b-a = z,$ $c-b = -z \omega^2 ,$ $d-c = -z,$ for some $z \in \{1, \omega, \omega^2\}.$ In the respective cases when $z= 1, \omega$ or $\omega^2$, we define corresponding sets of lattice rhombi of side $1$ to be $E_0(\mathbb L)$, $E_1(\mathbb L)$ or $E_2(\mathbb L)$. This structure is carried over to $\T_n$ by the map $\phi_{0, n}$ defined in the beginning of Subsection~\ref{sec:cover}.
Recall from the beginning of Subsection~\ref{sec:cover} that we have mapped $V(\T_n)$ on to $(\Z/n\Z) \times (\Z/n\Z)$ by mapping $1$ to $(1, 0)$ and $\omega$ to $(0, 1)$ and extending this map to $V(\T_n)$ via a $\Z$ module homomorphism. In particular, this maps $1 + \omega$ to $(1, 1)$.

Let us examine $S_{K_{ij}^o}(L_{ij}^o)$ for a fixed $i, j$ and $o$. Note that $0 \in K_{ij}^o$. Let us identify $\square_{ij}^o$ with $V(\T_{n_1})$ labelled by $[1, n_1]^2\cap \Z^2$ by mapping the south west corner of $\square_{ij}^o$ onto $(1, 1)$.

\begin{defn}
 For $r\in\{0, 1, 2\}$ and $1 \leq k, \ell \leq n_1$, let $u^r_{k\ell}:= u^r_{k\ell}(i, j, o)$ denote the unit outward normal to the  facet of $L_{ij}^o$ that corresponds to the edge in $E_r(\T_{n_1})$, whose south west corner is $(k, \ell)$. 
Let $h_{k\ell}^r = h_{k\ell}^r(i, j, o)$ be the maximum value of the functional $\a(a) = \langle a, u^r_{k\ell} \rangle$ as $a$ ranges over $K_{ij}^o$. 
\end{defn}
Note that $K_{ij}^o$ does not depend on $t_{ij}^o$.
We see that \beq S_{K_{ij}^o}(L_{ij}^o) = \sum_{r \in \{0, 1, 2\}} w_r^{(n_1)}(t_{ij}^o)\left( \sum_{1 \leq k, \ell \leq n_1} h_{k\ell}^r(o, i, j)\right).\lab{eq:7.4}\eeq 

Now, for each $r \in \{0, 1, 2\}$, we define a linear map $D_r$ from $\R^{V(\T_{n'})}$ to $\R^{E_r(\T_{n'})},$ where $n'$ will be a positive integer made clear from context.
Let $f \in \R^{V(\T_{n'})}$ and $(v_1, v_2) \in V(\T_{n'})$. We use $e_r(v_1, v_2)$ to refer to an edge in $E_r(\T_n)$ whose south east corner is the vertex $(v_1, v_2)$. Then,
\ben
\item[(0)] $D_0 f(v_1-1, v_2-1) = \nabla^2 f(e_0(v_1-1, v_2-1)) =  -f(v_1, v_2-1) - f(v_1, v_2) + f(v_1-1, v_2-1) + f(v_1 + 1, v_2).$
\item[(1)] $D_1 f(v_1, v_2) =  \nabla^2 f(e_1(v_1, v_2)) = f(v_1+1, v_2) + f(v_1 , v_2 + 1) - f(v_1, v_2) - f(v_1+1, v_2 + 1).$
\item[(2)] $D_2 f(v_1-1, v_2-1) =  \nabla^2 f(e_2(v_1-1, v_2-1)) = -f(v_1, v_2) - f(v_1-1, v_2) + f(v_1, v_2+1) + f(v_1-1, v_2-1).$
\een

As stated earlier in (\ref{eq:A}), we also have the first order difference operators $A_0$, $A_1$ and $A_2$ given by 

\ben
\item[($\star 0$)] $A_0 f(v_1-1, v_2-1) =  - f(v_1-1, v_2-1) + f(v_1 - 1, v_2).$
\item[($\star 1$)] $A_1 f(v_1, v_2) =   - f(v_1 - 1, v_2 - 1) + f(v_1, v_2).$
\item[($\star 2$)] $A_2 f(v_1-1, v_2-1) =  - f(v_1 - 1, v_2 - 1) + f(v_1, v_2).$
\een

As a consequence, we see the following.
\beq D_2 = A_0A_1\lab{eq:3.10}\eeq
\beq D_0 = A_1A_2\lab{eq:3.11}\eeq 
\beq D_1 = - A_2A_0.\lab{eq:3.12}\eeq

Recall that $K_{ij}^o$ is $\La_{ij}^o (\tilde{Q}_n(\tilde{\b} + o) -x)$. For  linear maps $D_0, D_1$ and $D_2$ described above,  taking $n' = n_1$
we have for $1 \leq k, \ell \leq n_1-2$, and $r \in \{0, 1, 2\}$,
 \beq 0 \leq h_{k\ell}^r(o, i, j) = s_r - D_rx(o_1 + in_1 +k, o_2 + jn_1 + \ell).\lab{eq:7.5}\eeq
When either $k$ or $\ell$ is one of the numbers $n_1 -1$ or $n_1$, we see that  
$h_{k\ell}^r$ can be different due to the possibility of the constraints wrapping around. However, it is always true due to the quantization in Definition~\ref{def:6.1}, that
\beq  0 \leq h_{k\ell}^r(o, i, j) \leq \frac{4}{M} + s_r- D_rx(o_1 + in_1 + k, o_2 + jn_1  + \ell).\lab{eq:7.6}\eeq
Let \beq \tau_{k\ell}^r(o, i, j) := h_{k\ell}^r(o, i, j) - \left( s_r- D_rx(o_1 + in_1 + k, o_2 + jn_1  + \ell)\right).\eeq
Thus, \beq D_rx(o_1 + in_1 + k, o_2 + jn_1  + \ell) - s_r \leq \tau_{k\ell}^r(o, i, j)  \leq \frac{4}{M}.\lab{eq:3.16}\eeq

%\begin{comment}
\begin{figure}\label{fig:subsquare}
\begin{center}
\includegraphics[scale=0.60]{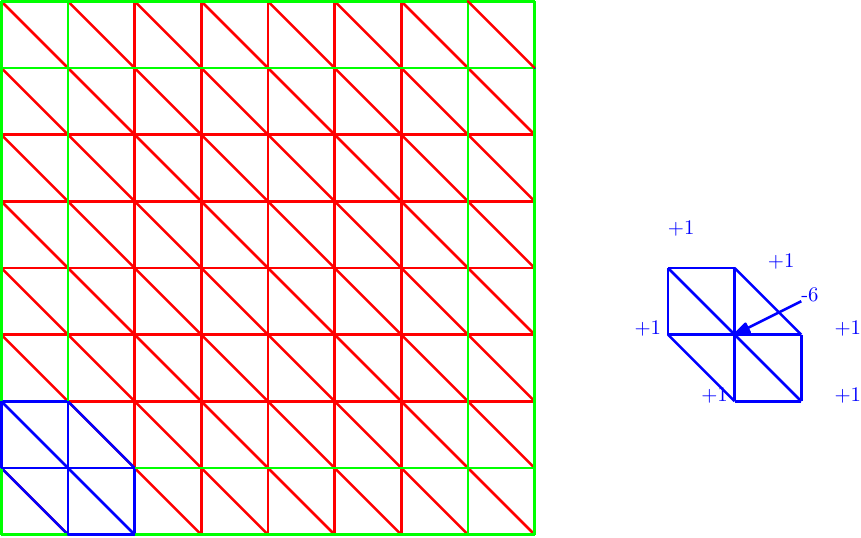}
\caption{The $(2,2,2)-$Laplacian acting on functions defined on a subsquare $\square_{ij}^o$
}
\end{center}
\end{figure}
%\end{comment}
 
Denoting $t_{ij}^o$ by $t$,   we let  $\Delta$ be defined by the following.

 \beqs 2(\Delta \ast y)(v) & := & w_0^{(n_1)}(t)(D_0y(v_1 -1, v_2 -1) + D_0y(v_1 -1, v_2))\nonumber\\
& + & w_1^{(n_1)}(t)(D_1y(v_1, v_2 ) + D_1y(v_1 -1, v_2-1))\nonumber\\
& + & w_2^{(n_1)}(t)(D_2y(v_1 -1, v_2 -1) + D_2y(v_1, v_2-1)).\eeqs
A rearrangement of this gives us 
\beq 2(\Delta \ast y)(i, j) & = & (- \wone_0 + \wone_1 + \wone_2) (y(i, j+1) - 2 y(i, j) + y(i, j-1))\nonumber\\
                                            & + & ( \wone_0 - \wone_1 + \wone_2) (y(i+1, j+1) - 2 y(i, j) + y(i-1, j-1))\nonumber\\
					& + & (\wone_0 + \wone_1 - \wone_2) (y(i+1, j) - 2 y(i, j) + y(i-1, j)).\lab{eq:rearrange}
\eeq
We note that apart from $n$ in (\ref{eq:7.7}) being substituted by $n_1$ in (\ref{eq:rearrange}) there is also a multiplicating rescaling by $|P_n(s)|$, while going from $\De_w$ in (\ref{eq:7.7}) to $\De$ in 
(\ref{eq:rearrange}).
We will now remark on $\frac{S_{K_{ij}^o}(L_{ij}^o)}{m}$. 
%We will restrict ourserves to the case when $o=0$,  since the other cases are equivalent up to a translation.
\beqs S_{K_{ij}^o}(L_{ij}^o) = \sum_{r \in \{0, 1, 2\}} w_r^{(n_1)}(t_{ij}^o)\left( \sum_{1 \leq k, \ell \leq n_1} h_{k\ell}^r(o, i, j)\right).\eeqs

%\beqs 2(\Delta \ast(\La_{ij}^o f))(k, \ell) & = & w_0^{(n_1)}(t_{ij}^o)(D_0f(in_1 + k -1, jn_1 +\ell -1) + D_0f(in_1 + k -1, jn_1 +\ell))\nonumber\\
%& + & w_1^{(n_1)}(t_{ij}^o)(D_1f(in_1 +k, jn_1 +\ell ) + D_1f(in_1 +k -1, jn_1+\ell-1))\nonumber\\
%& + & w_2^{(n_1)}(t_{ij}^o)(D_2f(in_1 +k -1, jn_1 +\ell -1) + D_2f(in_1 +k, jn_1 +\ell-1)).\eeqs

We observe that \beq \left(\frac{S_{K_{ij}^o}(L_{ij}^o)}{m_1}\right) & = & \sum_{r \in \{0, 1, 2\}} \frac{w_r^{(n_1)}(t)}{m_1}\left( \sum_{1 \leq k, \ell \leq n_1} h_{k\ell}^r(i, j, o)\right)\nonumber\\
& =  &  \sum_{r \in \{0, 1, 2\}} \frac{w_r^{(n_1)}(t)}{m_1}\left( \sum_{n_1 - 1 \leq k, \ell \leq n_1} \tau_{k\ell}^r(i, j, o)\right)  \nonumber\\& + &   \frac{\sum_{r \in \{0, 1, 2\}} n_1^2 {s}_r w_r^{(n_1)}(t_{ij}^o)}{m_1}\nonumber\\ &-&   m_1^{-1}\sum_{1 \leq k, \ell \leq n_1}  \left( \Delta_{t_{ij}^o} \ast x + \de_{t_{ij}^o} (x)\right)\left(o_1 + in_1 + k, o_2 + jn_1 + \ell\right)\nonumber\\\lab{eq:le}.\eeq
Here $\de_{t_{ij}^o} (x)$ comprises of a sum of some linear terms in $w_r^{(n_1)}(t_{ij}^o)A_0x$, $w_r^{(n_1)}(t_{ij}^o)A_1x$ and $w_r^{(n_1)}(t_{ij}^o)A_2x$ which are nonzero only  on ${k, \ell} (\mathrm{mod}\, n_1) \in \{ -1, 0, 1\}$, in addition to a linear term depending on $M^{-1}$. These terms correct for the behavior of $ \Delta_{t_{ij}^o} \ast x$ at the boundary of $\square_{ij}^o$.

%\subsection{Facts concerning  $|P_n(s) \cap B_\infty(g, \eps_{0.5} n^2)|$.}

\begin{defn}\lab{def:h0}
Let $\eps_1:= (h_0k_0)^{-1}$, where $h_0$ is some large positive integer that will be chosen later as a function of $n$. 
\end{defn}
We will choose the offset $o$ to be $0$, and consider the squares
$\square_{ij}^o$ as in (\ref{eq:squaredef}).

Let us define $q(t)$ as in Lemma~\ref{lem:2.3} to be the unique quadratic polynomial from $\L$ to $\R$ such that $\nabla^2q$ satisfies the following.

\ben 
\item  $\nabla^2q(e) = - t_0,$ if $e \in E_0(\mathbb L)$.
\item $\nabla^2q(e)  =  - t_1,$  if $e \in E_1(\mathbb L )$.
\item $\nabla^2q(e)  =  - t_2,$ if  $e \in E_2(\mathbb L)$.
\item $q((0, 0)) = q((n, 0)) = q((0, n)) = 0$.
\een
Let $f \in B_\infty(g, \eps_{0.5}n^2)$.
Recall from (\ref{eq:7.7}) that 
 $\Delta = \Delta_{t_{ij}^o}$ is the function from $V(\T_{n})$ to $\R$, uniquely specified by the following condition. For any $f:V(\T_{n}) \ra \R$, and $(v_1, v_2) = v \in V(\T_{n})$,
\beqs 2(\Delta \ast f)(v) & = & w_0^{(n_1)}(t_{ij}^o)(D_0f(v_1 -1, v_2 -1) + D_0f(v_1 -1, v_2))\nonumber\\
& + & w_1^{(n_1)}(t_{ij}^o)(D_1f(v_1, v_2 ) + D_1f(v_1 -1, v_2-1))\nonumber\\
& + & w_2^{(n_1)}(t_{ij}^o)(D_2f(v_1 -1, v_2 -1) + D_2f(v_1, v_2-1)).\eeqs

Let $\Phi$ be the function from $V(\T_n)$ to $\R$, given by \beq \Phi := \frac{\II(\square_{11}^0)}{n_1^2}, \eeq where for a subset $S$ of $V(\T_n)$, $\II(S)$ is the indicator function of $S$. 
\begin{defn}
We set $\tilde{t}_{ij}^o= \tilde{t}_{ij}^o(g)$ to that unique value of $t$ such that   
\beq   D_0 (\Phi \ast g)\left(o_1 + in_1 + k - 1, o_2 + jn_1 + \ell\right)  + \nonumber\\  D_0 (\Phi \ast g)\left(o_1 + in_1 + k - 1, o_2 + jn_1 + \ell - 1\right)   = 2(t_0 - s_0)\lab{eq:3.17}.\eeq
\beq   D_1 (\Phi \ast g)\left(o_1 + in_1 + k, o_2 + jn_1 + \ell\right)  +  \nonumber\\   D_0 (\Phi \ast g)\left(o_1 + in_1 + k - 1, o_2 + jn_1 + \ell - 1\right)   = 2(t_1 - s_1).\eeq
\beq   D_2 (\Phi \ast g)\left(o_1 + in_1 + k - 1, o_2 + jn_1 + \ell-1\right)  +  \nonumber\\   D_1 (\Phi \ast g)\left(o_1 + in_1 + k, o_2 + jn_1 + \ell -1 \right)   = 2(t_2 - s_2).\eeq
%\beqs 2(\Delta \ast(\La_{ij}^o f))(k, \ell) & = & w_0^{(n_1)}(t_{ij}^o)(D_0f(in_1 + k -1, jn_1 +\ell -1) + D_0f(in_1 + k -1, jn_1 %+\ell))\nonumber\\
%& + & w_1^{(n_1)}(t_{ij}^o)(D_1f(in_1 +k, jn_1 +\ell ) + D_1f(in_1 +k -1, jn_1+\ell-1))\nonumber\\
%& + & w_2^{(n_1)}(t_{ij}^o)(D_2f(in_1 +k -1, jn_1 +\ell -1) + D_2f(in_1 +k, jn_1 +\ell-1)).\eeqs
\end{defn}

\begin{defn}\lab{def:tijo}
We set ${t}_{ij}^o = t_{ij}^o(f)$ to that unique value of $t$ such that for each $r \in \{0, 1, 2\}$,
\beq \sum_{1 \leq k, \ell \leq n_1}  \left( D_r (f ) - \tau_{k\ell}^r\right)\left(o_1 + in_1 + k, o_2 + jn_1 + \ell\right)  = 2(t - s).\lab{eq:3.18}\eeq
\end{defn}
%In particular, this implies that 
%\beqs  m_1^{-1}\sum_{1 \leq k, \ell \leq n_1}  \left( \Delta_{t_{ij}^o} \ast (\bar f + q(s) -q(t_{ij}^o)) \right)%\left(o_1 + in_1 + k, o_2 + jn_1 + \ell\right) = 0.\eeqs

For this value of $t$, (\ref{eq:le}) gives us 
\beq \left(\frac{S_{K_{ij}^o}(L_{ij}^o)}{m_1}\right) & = & \sum_{r \in \{0, 1, 2\}} \frac{w_r^{(n_1)}(t)}{m_1}\left( \sum_{1 \leq k, \ell \leq n_1} h_{k\ell}^r(i, j, o)\right)\\
& = &   \frac{\sum_{r \in \{0, 1, 2\}} n_1^2 {t}_r w_r^{(n_1)}(t_{ij}^o)}{m_1}\nonumber\lab{eq:le1}
\eeq
In the above expression, by $t_r$, we mean $({t}_{ij}^o)_r$.

Suppose  $g = \Re \hat g$,  where $\hat g$ is the associated scaled complex exponential.
Since $h_0 k_0 n_1 \leq n$, and our choice of $h_0$ is $\omega(1)$, we see that  $\Phi \ast \hat g = \la \hat g$ for some  complex number $\la$, such that $1 \geq |\la| \geq 1 - o(1),$ and $\arg(\la) = o(1).$ 
Therefore, for each $1 \leq i, j \leq \frac{n_2}{n_1},$
\beqs \Re \left(\la D_r g\left(o_1 + in_1, o_2 + jn_1\right)\right)  = \left((\tilde{t}_{ij}^o)_r - s_r\right).\eeqs Thus,
 \beq (\tilde{t}_{ij}^o)_0 = s_0 + \Re((\omega_n^{k_0} - 1)(1 - \omega_n^{-(k_0 + \ell_0)}))\Re(\la \hat g)\left(o_1 + in_1, o_2 + jn_1\right),\eeq
\beq (\tilde{t}_{ij}^o)_1 = s_1 + \Re(-(\omega_n^{k_0} - 1)(\omega_n^{\ell_0} - 1))\Re(\la \hat  g)\left(o_1 + in_1, o_2 + jn_1\right),\eeq
and
\beq (\tilde{t}_{ij}^o)_2 = s_2 + \Re((\omega_n^{\ell_0} - 1)(1 - \omega_n^{-(k_0 + \ell_0)} ))\Re(\la \hat  g)\left(o_1 + in_1, o_2 + jn_1\right).\eeq
In particular, we see that for any $r, r'$, the ratio
\beq \frac{(\tilde{t}_{ij}^o)_r - s_r}{(\tilde{t}_{ij}^o)_{r'} - s_{r'}}\lab{eq:3.28} \eeq is independent of $o, i,j$, whenever the denominator is nonzero; otherwise the numerator is zero as well.

\begin{lem}\lab{lem:3.4} Let $f \in B_\infty(g, \eps_{0.5}n^2)$ , and suppose that $f$ satisfies 
$$\|D_r( f)\|_\infty < \check{C} s_r \log n,$$ for $r \in \{0, 1, 2\}$. Then, 
the following estimate for $|t_{ij}^o(f) - \tilde{t}_{ij}^o(g)|$ holds.

\beqs |t_{ij}^o(f) - \tilde{t}_{ij}^o(g)| <  Cn_1^{-1} \left(\|f - g\|_{\dot{\CC}^1}  + \check{C} s_2 \log n + M^{-1}\right),  \eeqs
\end{lem}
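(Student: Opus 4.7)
The plan is to view both $t_{ij}^o(f)$ and $\tilde{t}_{ij}^o(g)$ as averages of $D_r$ applied to $f$ and $g$ respectively over the square $\square_{ij}^o$, up to boundary corrections, and to bound their difference term by term. By Definition~\ref{def:tijo}, each component $(t_{ij}^o(f))_r$ is (after the common normalization by $n_1^2$) the average over the $n_1^2$ vertices of $\square_{ij}^o$ of $D_r f - \tau_{k\ell}^r$. On the other hand, since $\Phi = \II(\square_{11}^0)/n_1^2$ is the normalized indicator of a single fundamental square, the convolution identity gives $D_r(\Phi \ast g)(v) = \Phi \ast D_r g (v)$, which equals the average of $D_r g$ over the shifted copy $v - \square_{11}^0$. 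Hence the defining relations~(\ref{eq:3.17})--(\ref{eq:3.19}) express $(\tilde{t}_{ij}^o - s)_r$ (up to the same normalization used for $t_{ij}^o$) as the average of $D_r g$ over $\square_{ij}^o$, translated by a shift of size $O(1)$.

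Subtracting, I decompose $t_{ij}^o(f) - \tilde{t}_{ij}^o(g)$ componentwise into three contributions: (a) the average over $\square_{ij}^o$ of $D_r(f-g)$; (b) the average over $\square_{ij}^o$ of the boundary correction $\tau_{k\ell}^r$; and (c) the $O(1)$-shift discrepancy between the two averaging windows. For (a), I use the factorizations $D_0 = A_1 A_2$, $D_1 = -A_2 A_0$, $D_2 = A_0 A_1$ from (\ref{eq:3.10})--(\ref{eq:3.12}) and apply discrete summation by parts in each of the two factor directions: summing $A_i A_j u$ over the rectangular region $\square_{ij}^o$ telescopes to at most $O(n_1)$ boundary evaluations of $A_j u$ (or $A_i u$), each bounded in absolute value by $\|u\|_{\dot{\CC}^1}$. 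Plugging in $u = f - g$ and dividing by $n_1^2$ gives a contribution of size $O(n_1^{-1} \|f - g\|_{\dot{\CC}^1})$.

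For (b), observe from (\ref{eq:7.5})--(\ref{eq:3.16}) that $\tau_{k\ell}^r$ vanishes except when $k$ or $\ell$ lies in $\{n_1-1, n_1\}$, i.e.\ on the $O(n_1)$ vertices of $\square_{ij}^o$ adjacent to the wraparound boundary, where the cyclic constraints of the torus may be active. In that zone, (\ref{eq:3.16}) together with the hypothesis $\|D_r f\|_\infty < \check{C} s_r \log n$ yields $|\tau_{k\ell}^r| \leq |D_r f(\cdot) - s_r| + 4/M \leq C(\check{C} s_r \log n + M^{-1})$. Summing over the $O(n_1)$ affected vertices and dividing by $n_1^2$ produces a bound $O(n_1^{-1}(\check{C} s_2 \log n + M^{-1}))$, where I use $s_r \le s_2$ for $r \in \{0,1,2\}$. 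For (c), the $O(1)$-shift between the averaging domains of $t_{ij}^o$ and $\tilde{t}_{ij}^o$ inserts and removes $O(n_1)$ boundary evaluations of $D_r g$; since $g = \Re(\theta_{k_0\ell_0}\psi_{k_0\ell_0})$ is obtained from an element of $B_\infty(g,\eps_{0.5}n^2) \cap P_n(s)$ by convolution with a probability measure (as in the proof of Lemma~\ref{lem:2.14}), the bound $\|D_r g\|_\infty \leq C\check{C} s_r \log n$ is inherited from the hypothesis on $f$, and this contribution is also $O(n_1^{-1} \check{C} s_2 \log n)$. Summing (a), (b), (c) yields the claimed estimate.

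The main obstacle is careful bookkeeping of the three distinct boundary contributions: the natural rectangular boundary of $\square_{ij}^o$ along which the $A_i A_j$ summation-by-parts telescopes (giving term (a)), the wraparound boundary where $\tau_{k\ell}^r$ is forced to be nonzero (giving term (b)), and the $O(1)$-thick mismatch between the two averaging windows (giving term (c)). Each contributes $O(n_1)$ boundary evaluations but with different characteristic magnitudes, and the key is to ensure that all three estimates end up linear in the stated quantities $\|f-g\|_{\dot{\CC}^1}$, $\check{C} s_2 \log n$, and $M^{-1}$, with no hidden factors of $n_1$.
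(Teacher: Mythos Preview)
Your approach is essentially the paper's: both subtract the defining relations to write $(t_{ij}^o)_r-(\tilde t_{ij}^o)_r=n_1^{-2}\sum_{k,\ell}\bigl[D_r(\bar f-\bar g)-\tau_{k\ell}^r\bigr]$, telescope the $D_r=A_iA_j$ sum down to $O(n_1)$ boundary evaluations of $A_j(f-g)$ (your term (a)), and bound the $O(n_1)$ nonzero $\tau$-entries via (\ref{eq:3.16}) and the hypothesis on $\|D_rf\|_\infty$ (your term (b)).

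The paper does not separate out your term (c): its proof asserts directly that $\sum_{k,\ell}D_r\bar g$ over $\square_{ij}^o$ equals $n_1^2\bigl((\tilde t_{ij}^o)_r-s_r\bigr)$, i.e.\ that the two averaging windows coincide exactly. Your own justification for (c) is in any case flawed: the bound $\|D_rg\|_\infty\le C\check C s_r\log n$ is \emph{not} inherited from the hypothesis on $f$ --- the hypothesis concerns $f$, not the element from which $g$ was built by convolution, and these need not be the same point. The correct (and much stronger) bound $\|D_rg\|_\infty\le Cs_2$ follows from Lemma~\ref{lem:2.12}, or directly from the fact that $g\in P_n(s)$ is the real part of a single low-frequency exponential, so the one-sided Hessian constraint becomes two-sided. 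With that correction, your (c) is a harmless over-caution and the argument goes through.
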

\begin{proof}

Using (\ref{eq:3.10}), (\ref{eq:3.11}) and (\ref{eq:3.12}) 
  it follows that each of (\ref{eq:3.17}) to (\ref{eq:3.18}) collapses as a telescoping sum. We proceed to elaborate on the case of $r= 0$ in some detail. The cases of $r= 1$ and $r = 2$ and analogous.
Suppose $r = 0$. In the present situation $o = 0$. Let $\bar{f}$ and $\bar{g}$ denote periodic functions on $\L$ with whose representatives in $P_n(s)$ are respectively $f$ and $g$. Then,
\beqs \sum_{1 \leq k, \ell \leq n_1}  \left( D_r (\bar g  + q(s) - q(\tilde{t}_{ij}^o))\right)\left( in_1 + k,  jn_1 + \ell\right)  = 0,\eeqs and
\beqs \sum_{1 \leq k, \ell \leq n_1}  \left( D_r (\bar{f}  + q(s) - q(t_{ij}^o)) - \tau_{k\ell}^r\right)\left( in_1 + k,  jn_1 + \ell\right)  = 0,\eeqs
together give us 

\beqs \sum_{1 \leq k, \ell \leq n_1}  \left( D_r (\bar{f} - \bar{g}   - q(t_{ij}^o - \tilde{t}_{ij}^o) ) - \tau_{k\ell}^r\right)
\left( in_1 + k,  jn_1 + \ell\right)  = 0.\eeqs

This implies that 

\beqs \sum_{1 \leq k, \ell \leq n_1} \left( \left( D_r (\bar{f} - \bar{g}    )  \right)\left( in_1 + k,  jn_1 + \ell\right) - ((t_{ij}^o)_r - (\tilde{t}_{ij}^o)_r)\right) = \\
 \sum_{1 \leq k, \ell \leq n_1}   \tau_{k\ell}^r \left( in_1 + k,  jn_1 + \ell\right) . \eeqs
It follows that \beq ((t_{ij}^o)_r - (\tilde{t}_{ij}^o)_r) = n_1^{-2}\sum_{1 \leq k, \ell \leq n_1}  \left(\left( D_r (\bar{f} - \bar{g}    )- \tau_{k\ell}^r\right) \left( in_1 + k,  jn_1 + \ell\right)\right).\lab{eq:t}\eeq

If $r =0$, $D_r = A_{2}A_1. $

In this case, 
\beqs \sum_{1 \leq k, \ell \leq n_1}   \left(\left( D_0 (\bar{f} - \bar{g}    )\right) \left( in_1 + k,  jn_1 + \ell\right)- \tau_{k\ell}^r\left( in_1 + k,  jn_1 + \ell\right)\right)  =  \\   \sum_{1 \leq k, \ell \leq n_1}   \left(\left( A_{2} A_1 (\bar{f} - \bar{g}    )\right) \left( in_1 + k,  jn_1 + \ell\right)- \tau_{k\ell}^r\left( in_1 + k,  jn_1 + \ell\right)\right)  =   \\
  \left( \sum_{1 \leq \ell \leq n_1}  \left( A_1 (\bar{f} - \bar{g}    )\right) \left( (i+1)n_1,  jn_1 + \ell\right)-   \sum_{1 \leq k, \ell \leq n_1} \tau_{k\ell}^r\left( (i+1)n_1,  jn_1 + \ell\right)\right)  -   \\
\left( \sum_{1 \leq \ell \leq n_1}  \left( A_1 (\bar{f} - \bar{g}    )\right) \left(in_1 + 1,  jn_1 + \ell\right)-  \sum_{1 \leq k, \ell \leq n_1} \tau_{k\ell}^r\left( in_1+1,  jn_1 + \ell\right)\right).
\eeqs

By Lemma~\ref{lem:2.125} and (\ref{eq:3.16}) we see that 
\beqs \left|\sum_{1 \leq k, \ell \leq n_1}   \left(\left( D_0 (\bar{f} - \bar{g}    )\right) \left( in_1 + k,  jn_1 + \ell\right)- \tau_{k\ell}^r\left( in_1 + k,  jn_1 + \ell\right)\right) \right| <\\ Cn_1 \left(\|f - g\|_{\dot{\CC}^1}  + \check{C} s_0 \log n + M^{-1}\right).\eeqs
Analogous computations give us

\beqs \left|\sum_{1 \leq k, \ell \leq n_1}   \left(\left( D_1 (\bar{f} - \bar{g}    )\right) \left( in_1 + k,  jn_1 + \ell\right)- \tau_{k\ell}^r\left( in_1 + k,  jn_1 + \ell\right)\right) \right| <\\ Cn_1 \left(\|f - g\|_{\dot{\CC}^1}  + \check{C} s_1 \log n + M^{-1}\right),\eeqs
and 

\beqs \left|\sum_{1 \leq k, \ell \leq n_1}   \left(\left( D_2 (\bar{f} - \bar{g}    )\right) \left( in_1 + k,  jn_1 + \ell\right)- \tau_{k\ell}^r\left( in_1 + k,  jn_1 + \ell\right)\right) \right| <\\ Cn_1 \left(\|f - g\|_{\dot{\CC}^1}  + \check{C} s_2 \log n + M^{-1}\right).\eeqs
Together, the last three equations give us 
\beqs |t_{ij}^o - \tilde{t}_{ij}^o| <  Cn_1^{-1} \left(\|f - g\|_{\dot{\CC}^1}  + \check{C} s_2 \log n + M^{-1}\right),  \eeqs proving Lemma~\ref{lem:3.4}.
%{\bf Need to control RHS using $\CC^1$ norm etc.}
\end{proof}

%We now fine tune $\tilde{t}_{ij}^o$ further, and define $t_{ij}^o$ to be that value of $t$ for which...

Thus (\ref{eq:le1}) gives us 

\beq \left(\frac{S_{K_{ij}^o}(L_{ij}^o)}{m_1}\right) 
& =&     \frac{\sum_{r \in \{0, 1, 2\}} n_1^2 {t}_r w_r^{(n_1)}(t_{ij}^o)}{m_1}\nonumber\\
& = &  |P_{n_1}(t_{ij}^o)|.\lab{eq:le3}
\eeq
\begin{lem}
\lab{lem:3.7}
\beq \left|(\tilde{Q}_n(\tilde{\b} + o) - f)\right| & \leq & n^{\frac{4n_2^2}{n_1^2}}\prod\limits_{1 \leq i, j \leq \frac{n_2 }{n_1}}   |P_{n_1}(t_{ij}^o)|.
\eeq

\end{lem}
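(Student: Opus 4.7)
The plan is to chain together the two main ingredients that have already been established earlier in this section. First, the anisotropic isoperimetric inequality, applied to the pair of compact convex sets $K_{ij}^o = \Lambda_{ij}^o(\tilde{Q}_n(\tilde{\b}+o) - f)$ and $L_{ij}^o = P_{n_1}^{ij,o}(t_{ij}^o)$ living in $\R^{\square_{ij}^o}$, has already produced inequality (\ref{eq:3.6}):
\[
n^{-\frac{4n_2^2}{n_1^2}}\,\bigl|\tilde{Q}_n(\tilde{\b}+o)-f\bigr| \;\leq\; \prod_{1\leq i,j\leq n_2/n_1}\frac{\bigl(S_{K_{ij}^o}(L_{ij}^o)/m_1\bigr)^{m_1}}{|P_{n_1}(t_{ij}^o)|^{m_1-1}}.
\]
(This, in turn, relied on the Brunn--Minkowski-based bound (\ref{eq:2.2}) together with the euclidean estimate $n^{-4n_2^2/n_1^2}$ that controls how far the coordinate projections $\Lambda_{ij}^o$ move points of $\tilde{Q}_n(\tilde{\b}+o)-f$ off the mean-zero subspaces $A_{ij}^o$.)

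The second ingredient is the identity (\ref{eq:le3}), which states
\[
\frac{S_{K_{ij}^o}(L_{ij}^o)}{m_1} \;=\; |P_{n_1}(t_{ij}^o)|,
\]
and which was made available by the calibrated choice of $t_{ij}^o = t_{ij}^o(f)$ in Definition~\ref{def:tijo}. That choice was designed precisely so that the boundary corrections in the expansion (\ref{eq:le}) of $S_{K_{ij}^o}(L_{ij}^o)/m_1$ are absorbed, leaving the clean bulk term $n_1^2\sum_r t_r w_r^{(n_1)}(t_{ij}^o)/m_1$; Euler's homogeneity relation applied to $|P_{n_1}(\cdot)|$, a polynomial of degree $m_1$ in $t$, then rewrites this sum as $|P_{n_1}(t_{ij}^o)|$.

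The proof is then simply to substitute the second identity into the first. Doing so, each factor $\bigl(S_{K_{ij}^o}(L_{ij}^o)/m_1\bigr)^{m_1}$ becomes $|P_{n_1}(t_{ij}^o)|^{m_1}$, which upon cancellation with the denominator $|P_{n_1}(t_{ij}^o)|^{m_1-1}$ leaves a single factor $|P_{n_1}(t_{ij}^o)|$ per $(i,j)$ on the right-hand side. Multiplying both sides by $n^{4n_2^2/n_1^2}$ produces exactly the inequality claimed in the lemma. There is no genuine obstacle at this step: all of the substantive work, namely the appeal to the anisotropic isoperimetric inequality, the control of distances to $A_{ij}^o$, and the identification of the calibrated $t_{ij}^o$ that makes the anisotropic surface area collapse to $m_1 |P_{n_1}(t_{ij}^o)|$, has already been carried out; Lemma~\ref{lem:3.7} is the formal combination of these pieces.
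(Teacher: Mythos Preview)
Your proposal is correct and follows essentially the same approach as the paper: both invoke inequality (\ref{eq:3.6}) and then substitute the identity (\ref{eq:le3}) to collapse each factor to $|P_{n_1}(t_{ij}^o)|$. The paper passes through an intermediate $m_1$-th root before simplifying, whereas you substitute directly, but the underlying algebra is identical.
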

\begin{proof}
Recall by  (\ref{eq:3.6}), that

 \beqs \left( n^{- \frac{4n_2^2}{n_1^2}} \left|(\tilde{Q}_n(\tilde{\b} + o) - x)\right|\right) & \leq & 
 \prod\limits_{1 \leq i, j \leq \frac{n_2 }{n_1}} \frac{\left(\frac{S_{K_{ij}^o}(L_{ij}^o)}{m_1}\right)^{m_1}}{ |P_{n_1}(t_{ij}^o)|^{m_1 - 1}  }.
\eeqs
Therefore,
 \beq \left( n^{- \frac{4n_2^2}{n_1^2}} \left|(\tilde{Q}_n(\tilde{\b} + o) - f)\right|\right)^\frac{1}{m_1} & \leq & 
 \prod\limits_{1 \leq i, j \leq \frac{n_2 }{n_1}} \frac{\left(\frac{S_{K_{ij}^o}(L_{ij}^o)}{m_1}\right)}{ |P_{n_1}(t_{ij}^o)|^{1 - \frac{1}{m_1}}  }\\ 
& = & \prod\limits_{1 \leq i, j \leq \frac{n_2 }{n_1}}  |P_{n_1}(t_{ij}^o)|^{\frac{1}{m_1}}.\nonumber
\eeq
Thus,
\beqs \left|(\tilde{Q}_n(\tilde{\b} + o) - f)\right| & \leq & n^{\frac{4n_2^2}{n_1^2}}\prod\limits_{1 \leq i, j \leq \frac{n_2 }{n_1}}   |P_{n_1}(t_{ij}^o)|.
\eeqs
\end{proof}

\section{Concentration of measure in $P_n(s)$ with respect to $\ell_\infty$.}

Let $\sigma(s) := - \log \f(s)$ be called the surface tension at $s$.
\subsection{Concentration when the surface tension is strictly convex at $s$.}

For an arbitrary concave function $\kappa$ of $\R^3_+$, we will use $\nabla\kappa(x)$  to denote a supergradient, that is, some vector $v$ such that 
$$\kappa(y) - \kappa(x) \leq v \cdot (y-x),$$ for all $y$ in the domain of $\kappa.$ 
We assume that the surface tension $\sigma = - \log {\mathbf f}$ is strictly convex at $s$. 
The strict convexity of the surface tension  at $s$ implies the following: For any $t_{ij}^o \neq s$,  and any choice of supergradient $\nabla f(s)$ (since as far as we know, this need not be unique)
\beqs \log \mathbf{f}(\tilde{t}_{ij}^o(g)) < \log \mathbf{f}(s) + (\tilde{t}_{ij}^o(g) - s) \cdot \left(\frac{\nabla \mathbf{f}(s)}{\mathbf{f}(s)}\right). \eeqs 

Under this condition, we shall show that if  $f$ is sampled from $P_n(s)$ randomly, then for any fixed $\eps_0 > 0$,
\beq \lim_{n \ra 0} \p\left[\|f\|_\infty > \eps_0 n^2\right] = 0.\lab{eq:conc}\eeq

\begin{defn}
Let the defect $ \log \mathbf{f}(t) - \log \mathbf{f}(s) -  (t- s) \cdot \left(\frac{\nabla \mathbf{f}(s)}{\mathbf{f}(s)}\right)$ be denoted by $\mathbf{dfc}(t, s).$ 
\end{defn}
Note that for any $s$ where $\mathbf{f}$ is $\CC^1$, and any $t \neq s$, this defect is strictly negative due to the assumption of strict concavity of entropy.

By Corollary~\ref{cor:2.5} and Corollary~\ref{cor:...} we have 
\beqs   \left(1 - \frac{C\log n_1}{n_1}\right) \mathbf{f}(s) \leq \mathbf{f}_{n_1}(s) \leq \left(1 + \frac{C\log n_1}{n_1}\right) \mathbf{f}(s). \eeqs

Bronshtein \cite{Brsh} (see also \cite{Wellner}) obtained an upper bound of  \beq C_{br}\eps^{-\frac{d}{2}} \lab{eq:Br}\eeq
for the logarithm of the $L^\infty$ covering number
of the class of all convex functions $g$ defined on a fixed convex body $\Omega$ in $\R^d$
satisfying a (uniform) Lipschitz condition: $|g(y) - g(x)| \leq L|y - x|$ for all
$x, y \in \Omega$.
We note that the functions in $P_n(s)$ are $O(n)$-Lipschitz when extended to the continuous torus in the natural piecewise linear fashion.

We shall now set some parameters. 

\begin{defn}\lab{def:3.8}
%We assume that $\eps_0 = n^{-\de_0}$. Let $\eps_{0.5} n^2 = n^{2 - 2\de_{0.5}}.$ 
Let $h_0$ (see Definition~\ref{def:h0}) be set so that $ n_1 \sim \eps_1 n$. 
Let $M = (s_2 \sqrt{\eps_{0.5}}  n )^{-1}$ and $\check{C} = \frac{\sqrt{\eps_{0.5}}n}{\log n}.$ 
\end{defn}

%Then, choosing $n$ to be sufficiently large, we see that 
%\beqs \log \mathbf{f}_{n_1}(\tilde{t}_{ij}^o(g)) -  \log \mathbf{f}_n(s) -  (\tilde{}t_{ij}^o(g) - s) \cdot %\left(\frac{\nabla \mathbf{f}(s)}{\mathbf{f}(s)}\right)< \frac{\mathbf{dfc}(\tilde{t}_{ij}^o(g), s)}{2}. \eeqs 

We are now ready to prove the following theorem.

\begin{thm} \lab{thm:1} Let $s$ be point in $\R_+^3$ such that the surface tension $\sigma(s) = - \log \mathbf{f}(s)$ is strictly convex at $s$. Let $\eps_0$ be a universal constant greater than $0$. Then, for any positive $\de$,  for all sufficiently large $n$, 
\beq |P_n(s)\setminus B_\infty\left(0, \eps_0 n^2\right)| & \leq & \de|P_n(s)|.\eeq
\end{thm}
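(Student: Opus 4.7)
The plan is to cover the ``bad'' set $P_n(s)\setminus B_\infty(0,\eps_0 n^2)$ by quantized polytopes of small volume via Lemma~\ref{lem:3.7}, with Bronshtein's theorem controlling the covering count, and then harvest strict concavity of $\mathbf f$ at $s$ for an exponential saving that defeats the count.

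First I would discard the tails: let $K$ be the convex set of $g$ with $\|g\|_{L_2^2}\le C_2 n\ln(4/\de)$ and $\|D_r g\|_\infty\le \check C\log n$ for each $r\in\{0,1,2\}$. Lemma~\ref{lem:lemma1} combined with Markov and Lemma~\ref{lem:2.125} together with a union bound yield $|P_n(s)\setminus K|\le(\de/2)|P_n(s)|$, so it suffices to bound $|P_n(s)\cap K\setminus B_\infty(0,\eps_0 n^2)|$. Cover $K\cap P_n(s)$ by $\ell_\infty$-balls of radius $\eps_{0.5} n^2$ with centers $g_\ell\in K\cap P_n(s)$ using Bronshtein's bound~(\ref{eq:Br}) (functions in $P_n(s)$ are $O(n)$-Lipschitz on the continuous torus), and further quantize via Definition~\ref{def:6.1} to polytopes $\tilde Q_n(\b+o,s,g_\ell)$. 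Ranging over all offsets $o\in V(\T_n)$ as well, the total count of polytopes is $\exp(O(n\log n))$ by Lemma~\ref{lem:polytope_number}.

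For each center $g_\ell$ with $\|g_\ell\|_\infty\ge\eps_0 n^2$, Lemma~\ref{lem:infty} gives $\|g_\ell\|_2\ge c(\eps_0)n^3$, so Lemma~\ref{lem:2.11} extracts a frequency $(k_0,\ell_0)$ with $k_0^2+\ell_0^2$ bounded in terms of $\eps_0,\de$ and $|\theta_{k_0\ell_0}|\ge c(\eps_0,\de)n^2$. Lemma~\ref{lem:2.14} lets me replace $g_\ell$ by $\Re(\theta_{k_0\ell_0}\psi_{k_0\ell_0})$ when estimating the intersection volume, after which Lemma~\ref{lem:3.7} bounds it by $n^{4n_2^2/n_1^2}\prod_{i,j}|P_{n_1}(t_{ij}^o)|$ for parameters as in Definition~\ref{def:3.8}, with $t_{ij}^o$ close to $\tilde t_{ij}^o$ via Lemma~\ref{lem:3.4}. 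The ratio identity (\ref{eq:3.28}) places all $\tilde t_{ij}^o$ on a single line through $s$ in $\R^3$, sampled from a scaled complex exponential on the grid $\{(in_1+o_1,jn_1+o_2)\}_{i,j}$ with magnitudes $\Theta(|\theta_{k_0\ell_0}|/n^2)\cdot|\cos(\text{phase})|=\Theta(c(\eps_0,\de))\cdot|\cos(\text{phase})|$; since a positive fraction of phases stay away from $\pi/2$, an $\Omega(1)$ fraction of the $(n_2/n_1)^2$ squares satisfy $|\tilde t_{ij}^o-s|\ge c_1(\eps_0,\de)$.

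The main obstacle is converting strict concavity into a quantitative gain that survives both the linearization error and the covering count. Along the line containing the $\tilde t_{ij}^o$, strict concavity of $\mathbf f$ at $s$ produces a uniform defect gap $\mathbf{dfc}(\tilde t_{ij}^o,s)\le-\eta(\eps_0,\de,s)<0$ on the $\Omega(1)$ fraction above. A geometric-series computation for the Riemann sum $\sum_{i,j}(\tilde t_{ij}^o-s)$ shows it equals $O(|\theta_{k_0\ell_0}|/n^2)\cdot O(1/\eps_1^2)$ in magnitude with a phase that varies with $o\in V(\T_n)$; since the underlying character has mean zero, averaging over $o$ gives zero, so some offset $o$ makes the linear Taylor term $(n_1^2-1)\nabla\log\mathbf f(s)\cdot\sum_{i,j}(t_{ij}^o-s)\le 0$ (the extra factor $n^2=\exp(O(\log n))$ for the choice of $o$ is already absorbed into the polytope count). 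Using Corollary~\ref{cor:...} to write $\log|P_{n_1}(t)|=(n_1^2-1)\log\mathbf f(t)+O(n_1\log n_1)$ uniformly in $t$, the Taylor expansion of $\log\mathbf f$ around $s$ yields
\beqs
\log\!\prod_{i,j}|P_{n_1}(t_{ij}^o)|\le n_2^2\log\mathbf f(s)-c_2\eta\, n^2+o(n^2).
\eeqs
Since $\log|P_n(s)|=n_2^2\log\mathbf f(s)+O(n\log n)$ by Corollary~\ref{cor:...} once more, each polytope has volume $\le|P_n(s)|\exp(-c'n^2+o(n^2))$, which for large $n$ dominates both the factor $n^{4n_2^2/n_1^2}=\exp(O(\log n))$ (with $\eps_1$ held constant) and the $\exp(O(n\log n))$ total polytope count. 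Summing over covering polytopes and adding the discarded tail yields $|P_n(s)\setminus B_\infty(0,\eps_0 n^2)|\le\de|P_n(s)|$ for all sufficiently large $n$.
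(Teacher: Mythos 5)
Your proposal is correct and follows essentially the same route as the paper's own proof: tail removal via the Sobolev and $\|D_r\|_\infty$ bounds, Bronshtein covering combined with Lemma~\ref{lem:2.14}, the quantized-polytope volume bound of Lemma~\ref{lem:3.7} together with Lemma~\ref{lem:3.4}, and the strict-concavity defect providing an $\exp(-cn^2)$ saving that beats the $\exp(O(n\log n))$ polytope count. Your two refinements --- making explicit the uniform defect gap on an $\Omega(1)$ fraction of the squares, and disposing of the linear Taylor term by averaging over offsets $o$ --- are sound and in fact mirror devices the paper uses in Theorem~\ref{thm:2} but leaves implicit in its proof of Theorem~\ref{thm:1}.
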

\begin{proof}
By Lemma~\ref{lem:2.13},
\beqs \|f - g\|_{\dot{\CC}^1} \leq Cs_2 \sqrt{\eps_{0.5}} n.\eeqs

Also, by Lemma~\ref{lem:3.4}, we see that
if $f$ satisfies 
$$\|D_r( f)\|_\infty < \check{C} s_r \log n,$$ for $r \in \{0, 1, 2\}$ then, 

\beq|t_{ij}^o(f) - \tilde{t}_{ij}^o(g)| & < & Cn_1^{-1} \left(\|f - g\|_{\dot{\CC}^1}  + \check{C} s_2 \log n + M^{-1}\right)\nonumber\\
                                                            & < &  Cn_1^{-1} \left( Cs_2 \sqrt{\eps_{0.5}} n  + \check{C} s_2 \log n + M^{-1}\right)\nonumber\\
& < & C \left(\frac{\sqrt{\eps_{0.5}}}{\eps_1 }\right).\lab{eq:comp}\eeq

Recall from Lemma~\ref{lem:3.7} that :

\beqs \left|(\tilde{Q}_n(\tilde{\b} + o) - f)\right| & \leq & n^{\frac{4n_2^2}{n_1^2}}\prod\limits_{1 \leq i, j \leq \frac{n_2 }{n_1}}   |P_{n_1}(t_{ij}^o)|.
\eeqs
In view of (\ref{eq:comp}) and Corollary~\ref{cor:lip} (which states that  $|\mathbf{f}_n(s) - \mathbf{f}_n(t)| < (2e + \eps)|s - t|.$), we therefore have

\beq \left|(\tilde{Q}_n(\tilde{\b} + o) - f)\right| & \leq & n^{\frac{4n_2^2}{n_1^2}}\exp \sum\limits_{1 \leq i, j \leq \frac{n_2 }{n_1}}(n_1^2 - 1)\ln |\mathbf{f}_{n_1}(t_{ij}^o(f))|\\
& \leq & n^{\frac{4n_2^2}{n_1^2}}\exp \sum\limits_{1 \leq i, j \leq \frac{n_2 }{n_1}}(n_1^2 - 1)\ln \left(\left|\mathbf{f}_{n_1}(\tilde{t}_{ij}^o(g)) + C\left(\frac{\sqrt{\eps_{0.5}}}{\eps_1 }\right)\right|\right)\\
& \leq & n^{\frac{4n_2^2}{n_1^2}}\exp \sum\limits_{1 \leq i, j \leq \frac{n_2 }{n_1}}(n_1^2 - 1)\left(\ln \left(\left|\mathbf{f}(\tilde{t}_{ij}^o(g))\right) + C\left(\frac{\log n_1}{n_1}\right) + C\left(\frac{\sqrt{\eps_{0.5}}}{\eps_1 }\right)\right|\right).\nonumber\\
\lab{eq:from-lem-3.6}
\eeq
 
We simplify \beqs \ln \left(\left|\mathbf{f}(\tilde{t}_{ij}^o(g)) +  C\left(\frac{\log n_1}{n_1}\right) + C\left(\frac{\sqrt{\eps_{0.5}}}{\eps_1 }\right)\right|\right)\eeqs
further as follows.
\beqs
\log \left(\left|\mathbf{f}(\tilde{t}_{ij}^o(g)) +  C\left(\frac{\log n_1}{n_1}\right) + C\left(\frac{\sqrt{\eps_{0.5}}}{\eps_1 }\right)\right|\right) & \leq & 
\ln \left(\left|\mathbf{f}(\tilde{t}_{ij}^o(g))\right|\right)  + C\left(\frac{\sqrt{\eps_{0.5}}}{\eps_1 }\right).\eeqs
This is in turn less or equal to \beqs \log \mathbf{f}(s) + (\tilde{t}_{ij}^o(g) - s) \cdot \left(\frac{\nabla \mathbf{f}(s)}{\mathbf{f}(s)}\right) + \mathbf{dfc}(\tilde{t}_{ij}^o(g), s) + C\left(\frac{\sqrt{\eps_{0.5}}}{\eps_1 }\right).
\eeqs

Thus,

\beqs \sum\limits_{1 \leq i, j \leq \frac{n_2 }{n_1}}(n_1^2 - 1)\ln \left(\left|\mathbf{f}(\tilde{t}_{ij}^o(g)) + C\left(\frac{\log n_1}{n_1}\right) + C\left(\frac{\sqrt{\eps_{0.5}}}{\eps_1 }\right)\right|\right) \eeqs is less or equal to \beq
\sum\limits_{1 \leq i, j \leq \frac{n_2 }{n_1}}n_1^2\left(\log \left(\mathbf{f}(s)\right) +  \mathbf{dfc}(\tilde{t}_{ij}^o(g), s) + C\left(\frac{\sqrt{\eps_{0.5}}}{\eps_1 }\right)\right).
\lab{eq:last?}\eeq
Recall that $\mathbf{dfc}(\tilde{t}_{ij}^o(g), s)$ is strictly negative. Thus,
By setting $\frac{\sqrt{\eps_{0.5}}}{\eps_1}$ to be a sufficiently small universal constant, we can ensure that (\ref{eq:last?}) 
is less or equal to 
\beqs
\sum\limits_{1 \leq i, j \leq \frac{n_2 }{n_1}}n_1^2\left(\log \left(\mathbf{f}(s)\right) +  \frac{\mathbf{dfc}(\tilde{t}_{ij}^o(g), s)}{2}\right)
\eeqs
for any $\tilde{t}$ corresponding to eigenfunction indices $k_0, \ell_0$ generated from Lemma~\ref{lem:2.11}. 

As a result of this, we see by (\ref{eq:from-lem-3.6}) that
\beqs \left|(\tilde{Q}_n(\tilde{\b} + o) - f)\right| & \leq & 
 n^{\frac{4n_2^2}{n_1^2}}\exp  \sum\limits_{1 \leq i, j \leq \frac{n_2 }{n_1}}n_1^2\left(\log \left(\mathbf{f}(s)\right) +  \frac{\mathbf{dfc}(\tilde{t}_{ij}^o(g), s)}{2}\right).\eeqs
It follows that
\beq \left|(\tilde{Q}_n(\tilde{\b} + o) - f)\right|  \mathbf{f}(s)^{-n_2^2} & \leq & 
 n^{\frac{4n_2^2}{n_1^2}}\exp  \sum\limits_{1 \leq i, j \leq \frac{n_2 }{n_1}}n_1^2\left(  \frac{\mathbf{dfc}(\tilde{t}_{ij}^o(g), s)}{2}\right).\lab{eq:end...}\eeq
As a result,

\beqs \left|(\tilde{Q}_n(\tilde{\b} + o) - f)\right| |P_n(s)|^{-1} & \leq &  \exp(- \eps_2 n^2),\eeqs
where $\eps_2$ is some universal constant greater than $0$, depending on $\eps_0$ and $s$ alone.

Let $\A_1$ be the subset of $P_n(s)$ consisting of all those $f$ for which $\|f\|_{L_2^2} \geq \check{C}_2n\ln \de^{-1}$, which by  Lemma~\ref{lem:2.11}, has measure at most $\de|P_n(s)|$. 
Let $\A_2$ be the subset of $P_n(s)$ consisting of all those $f$ such that for each $r\in \{0, 1, 2\}$, $\|D_r(f)\|_\infty > \check{C}\log n$, which by Lemma~\ref{lem:2.125} has measure at most $n^{-c\check{C}+2}|P_n(s)|$.

Using Lemma~\ref{lem:polytope_number}, which provides an upper bound on the number of covering polytopes $\tilde{Q}(\b, s, x)$, we have the following for all sufficiently large $n$.
\beqs \left|\left(P_n(s) \setminus (\A_1\cup \A_2)\right) \cap B_\infty(g, \eps_{0.5}n^2)\right| < \exp(- \frac{\eps_2 n^2}{2})|P_n(s)|.\eeqs
Using Bronshtein's upper bound of (\ref{eq:Br})  and Lemma~\ref{lem:2.14} with $\check{C} = P_n(s)\setminus(\A_1 \cup \A_2)$, we see that 
\beqs \left|P_n(s) \setminus (\A_1\cup \A_2 \cup B_\infty(0, \eps_0n^2)) \right| <\exp( C_{br}\eps_{0.5}^{-1})\exp(- \frac{\eps_2 n^2}{2})|P_n(s)|.\eeqs

Therefore
\beq |P_n(s)\setminus B_\infty\left(0, \eps_0 n^2\right)| & \leq & \left(\left(n^{-c\check{C} + 2}\right) + \exp(- \frac{\eps_2}{2} n^2) + \de\right)|P_n(s)|.\eeq
This completes our proof.
\end{proof}

%\section{Concentration of measure in $P_n(s)$ with respect to $\ell_\infty$.}

\subsection{Concentration when a subgradient of the surface tension belongs to a certain cone.}

As stated earlier, Bronshtein \cite{Brsh} (see also \cite{Wellner}) obtained an upper bound of  $C\eps^{-\frac{d}{2}}$
for the logarithm of the $L^\infty$ covering number at scale $\eps$
of the class of all convex functions $g$ defined on a fixed convex body $\Omega$ in $\R^d$
satisfying a (uniform) Lipschitz condition: $|g(y) - g(x)| \leq L|y - x|$ for all
$x, y \in \Omega$.
We note that the functions in $P_n(s)$ are $O(n)$-Lipschitz when extended to the continuous torus in the natural piecewise linear fashion.

We shall now set some parameters. 

\begin{defn}\lab{def:3.8}
%We assume that $\eps_0 = n^{-\de_0}$. Let $\eps_{0.5} n^2 = n^{2 - 2\de_{0.5}}.$ 
Let $h_0$ (see Definition~\ref{def:h0}) be set so that $ n_1 \sim \eps_1 n$. 
Let $M = (s_2 \sqrt{\eps_{0.5}}  n )^{-1}$ and $\check{C} = \frac{\sqrt{\eps_{0.5}}n}{\log n}.$ 
\end{defn}

\begin{thm}\lab{thm:2}
%Suppose that that $\mathbf{f}$ is twice differentiable at $s$.  
Let $w$ be a supergradient of $\f$ at $s$.
Suppose that 
\beq w_0^2 + w_1^2 + w_2^2 < 2\left(w_0 w_1 + w_1 w_2 + w_2 w_0\right).\eeq
 Then, for any $\eps > 0$, 
we have  $$\lim_{n \ra 0} \p\left[\|g\|_\infty > n^{\frac{7}{4} + \eps}\right] = 0$$  when $g$ is randomly sampled from the uniform measure on $P_n(s)$.
\end{thm}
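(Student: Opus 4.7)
The plan is to mirror the proof of Theorem~\ref{thm:1}, but to upgrade the qualitative input ``$\mathbf{dfc}(\tilde t_{ij}^o, s)$ is strictly negative'' to a \emph{quantitative} lower bound on $|\mathbf{dfc}|$ obtained from the discriminant condition~(\ref{eq:w}). First, I would set $\eps_0 := n^{-1/8 + \eps/2}$, so the event $\|g\|_\infty > n^{15/8+\eps}$ becomes $\{\|g\|_\infty > \eps_0 n^2\}$; choose $\de = n^{-1}$; and condition on tameness events that control $\|g\|_{L_2^2}$ and the $\|D_r g\|_\infty$ (each failing with probability $o(1)$). Lemma~\ref{lem:infty} shows the hypothesis forces $\|g\|_2 \gtrsim \eps_0^2 n^3$, so Lemma~\ref{lem:2.11} produces a frequency $(k_0,\ell_0) \in \Z^2\setminus\{0\}$ with
\[ k_0^2 + \ell_0^2 \,\lesssim\, \log(\de^{-1})/\eps_0^2, \qquad |\theta_{k_0\ell_0}| \,\gtrsim\, \eps_0^2 n^2/\sqrt{\log \de^{-1}}. \]
I would then cover $P_n(s) \cap K$ by $\exp(C_{br}\eps_{0.5}^{-1})$ $L^\infty$-balls of radius $\eps_{0.5} n^2$ via Bronshtein's theorem, and apply Lemma~\ref{lem:2.14} to replace each ball center $g$ by the low-frequency complex exponential $\Re(\theta_{k_0\ell_0}\psi_{k_0\ell_0})$ without decreasing the relevant volume.

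The crux is a discriminant computation. Using the formulas for $(\tilde t_{ij}^o)_r - s_r$ and the expansion $\omega_n^{k} - 1 = 2\pi\imath k/n + O(k^2/n^2)$, one finds to leading order
\[ w\cdot (\tilde t_{ij}^o - s) \,=\, -\frac{4\pi^2 + o(1)}{n^2}\,\Re\bigl(\lambda\hat g(\square_{ij}^o)\bigr)\cdot Q(k_0,\ell_0), \]
where $Q(a, b) := w_0 a^2 + (w_0 + w_2 - w_1)\,a b + w_2\, b^2$. A direct calculation shows that the discriminant of $Q$ equals
\[ (w_0+w_2-w_1)^2 - 4 w_0 w_2 \,=\, w_0^2 + w_1^2 + w_2^2 - 2(w_0 w_1 + w_1 w_2 + w_0 w_2), \]
which is strictly negative by hypothesis~(\ref{eq:w}). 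Combined with $w_0 > 0$, this makes $Q$ positive definite on $\R^2$, so $Q(k_0,\ell_0) \geq c_w(k_0^2 + \ell_0^2) \geq c_w > 0$ for some constant $c_w$ depending only on $w$. Combining the supergradient inequality $\mathbf{f}(\tilde t_{ij}^o) \leq \mathbf{f}(s) + w\cdot(\tilde t_{ij}^o - s)$ with the elementary estimate $\log(1+x) \leq x - x^2/4$ (valid in the bounded range ensured by tameness conditioning), I would deduce
\[ \mathbf{dfc}(\tilde t_{ij}^o, s) \,\leq\, -c\,\frac{Q(k_0,\ell_0)^2\,\Re\bigl(\lambda\hat g(\square_{ij}^o)\bigr)^2}{n^4}, \]
and summing over the $(n_2/n_1)^2$ squares using $\sum_{i,j}\Re(\lambda\hat g(\square_{ij}^o))^2 \asymp (n/n_1)^2|\theta_{k_0\ell_0}|^2$ would yield
\[ \sum_{i,j} n_1^2\,\mathbf{dfc}(\tilde t_{ij}^o, s) \,\lesssim\, -c_w^2|\theta_{k_0\ell_0}|^2/n^2 \,\lesssim\, -\eps_0^4 n^2/\log \de^{-1}. \]

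To close, I would feed this defect bound into Lemma~\ref{lem:3.7}, combining with Lemma~\ref{lem:3.4} (which gives $|t_{ij}^o - \tilde t_{ij}^o| \lesssim \sqrt{\eps_{0.5}}/\eps_1$), Corollary~\ref{cor:lip} for the Lipschitzness of $\mathbf{f}_n$, Lemma~\ref{lem:polytope_number} for the number of covering polytopes, and the Bronshtein cover, to obtain
\[ |P_n(s)\setminus B_\infty(0,\eps_0 n^2)| \leq |P_n(s)|\exp\!\bigl(C_{br}\eps_{0.5}^{-1} + C\eps_1^{-1}n\log n + Cn^2\sqrt{\eps_{0.5}}/\eps_1 - c\,\eps_0^4 n^2/\log\de^{-1}\bigr) + o(|P_n(s)|). \]
I would then choose $\eps_{0.5}$ marginally above $n^{-(2-4\beta)}$ and $\eps_1$ marginally above $n^{-(1-6\beta)}$ with $\beta = 1/8 - \eps/2$, so that all three ``cost'' terms are dominated by the defect term and the exponent tends to $-\infty$. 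The main obstacle is precisely this balancing: unlike Theorem~\ref{thm:1}, where the defect is of full order $n^2$, here the defect scales only as $\eps_0^4 n^2/\log\de^{-1}$, which is polynomially smaller, so the Bronshtein covering cost $\eps_{0.5}^{-1}$, the polytope-count cost $\eps_1^{-1}n\log n$, and the linearization-error cost $n^2\sqrt{\eps_{0.5}}/\eps_1$ must all simultaneously be absorbed, and the exponent $15/8$ is precisely what one can sustain given the constraint $\eps_1 \leq 1$ in the resulting system of inequalities.
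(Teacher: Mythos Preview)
Your overall architecture matches the paper's: Bronshtein covering, the convolution reduction of Lemma~\ref{lem:2.14} to a pure exponential, the discriminant computation showing $Q(k_0,\ell_0)\geq c_w(k_0^2+\ell_0^2)$, and the volume bound from Lemma~\ref{lem:3.7}. The parameter regime you arrive at ($\eps_1\sim n^{-1/4}$, $\eps_{0.5}\sim n^{-3/2}$) is also the paper's.

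There is, however, a genuine gap. When you expand $\log \mathbf{f}(\tilde t_{ij}^o)$ around $\log \mathbf{f}(s)$ you get
\[
\log \mathbf{f}(\tilde t_{ij}^o)\;=\;\log \mathbf{f}(s)\;+\;(\tilde t_{ij}^o-s)\cdot w\;+\;\mathbf{dfc}(\tilde t_{ij}^o,s),
\]
and feeding this into Lemma~\ref{lem:3.7} leaves the \emph{linear} sum $\sum_{i,j}(n_1^2-1)\,(\tilde t_{ij}^o-s)\cdot w$ sitting in the exponent, in addition to your defect term. Your final displayed inequality omits this linear contribution entirely. For a fixed offset $o$, this sum is proportional to $\sum_{i,j}\Re(\lambda\hat g)(o+(in_1,jn_1))$, a sum of a nontrivial exponential over a sparse grid; it has no reason to vanish or even to be dominated by the quadratic defect (its size can be of order $(k_0^2+\ell_0^2)|\theta_{k_0\ell_0}|$, which for small $\eps_0$ swamps $|\theta_{k_0\ell_0}|^2/n^2$). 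The paper resolves this by averaging the entire estimate over a uniformly random offset $o$: since $\Re(\lambda\hat g)$ is a mean-zero character, $\E_o[(\tilde t_{ij}^o-s)]=0$, so the linear term disappears in expectation, and one then selects an $o$ achieving at most the average. The same averaging is what justifies your claim $\sum_{i,j}\Re(\lambda\hat g(\square_{ij}^o))^2\asymp (n/n_1)^2|\theta_{k_0\ell_0}|^2$, which a priori holds only in expectation over $o$, not pointwise.

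A secondary difference, not a gap: you pass from $\ell_\infty$ to $\ell_2$ at the outset via Lemma~\ref{lem:infty} (costing a factor $\eps_0$), whereas the paper runs the whole argument for the $\ell_2$-event $\{\|g\|_2\ge \eps_0 n^3\}$ with $\eps_0=n^{-1/4+\eps}$ and only at the very end transfers to $\ell_\infty$ via the logconcave-marginal bound Lemma~\ref{lem:5.8-Nov-2020}. Both routes land in the same parameter regime (your $\eps_0^4$ plays the role of the paper's $\eps_0^2$), so this is organizational; the paper's route in fact yields the slightly stronger exponent $7/4+\eps$ rather than $15/8+\eps$.
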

\begin{comment}
\begin{thm}\lab{thm:2}
%Suppose that that $\mathbf{f}$ is twice differentiable at $s$.  

For any $\eps > 0$, 
we have  $$\lim_{n \ra 0} \p\left[\|g\|_\infty > n^{\frac{7}{4} + \eps}\right] = 0$$  when $g$ is randomly sampled from $P_n(s)$.
\end{thm}
\end{comment}
\begin{proof}
\begin{comment}
By Corollary~\ref{cor:strike}, we may choose a supergradient $w = \partial \f(s)$ that belongs to $\C$. 
Suppose $(a, b, c) \in \C$, \ie that $a, b, c$ are the side lengths of a nondegenerate triangle in the plane.
Therefore $$(a + b - c)(a - b + c) + (a + b - c)(-a + b + c) + (a - b + c)(-a + b + c) > 0.$$ It follows that 
$$a^2 + b^2 + c^2 <  2ab + 2bc + 2ca.$$
Therefore, $\C$ is contained in the "ice-cream cone" defined by 
\beq w_0^2 + w_1^2 + w_2^2 < 2\left(w_0 w_1 + w_1 w_2 + w_2 w_0\right).\lab{eq:icecream}\eeq
\end{comment}
Suppose $g \in P_n(s)$ and $\|g\|_2 \geq \eps_0 n^3.$
Recall from Lemma~\ref{lem:2.11} that $k_0, \ell_0$ are bounded above in magnitude by $\frac{\log \de^{-1}}{c\eps_0^2}$. 
Therefore, 
 \beqs (\tilde{t}_{ij}^o)_0 & = & s_0 + \Re((\omega_n^{k_0} - 1)(1 - \omega_n^{-(k_0 + \ell_0)}))\Re(\la \hat g)\left(o_1 + in_1, o_2 + jn_1\right)\\
& = & s_0 - \frac{4\pi^2k_0(k_0 + \ell_0)}{n^2}(1 + O(\max(|k_0|, |\ell_0|) n^{-1}))\Re(\la \hat g)\left(o_1 + in_1, o_2 + jn_1\right).\eeqs
\beqs (\tilde{t}_{ij}^o)_1 & = & s_1 + \Re(-(\omega_n^{k_0} - 1)(\omega_n^{\ell_0} - 1))\Re(\la \hat  g)\left(o_1 + in_1, o_2 + jn_1\right)\\
& = &  s_1 + \frac{4\pi^2k_0\ell_0}{n^2}(1 + O(\max(|k_0|, |\ell_0|) n^{-1}))\Re(\la \hat g)\left(o_1 + in_1, o_2 + jn_1\right).\eeqs
and
\beqs (\tilde{t}_{ij}^o)_2 & = & s_2 + \Re((\omega_n^{\ell_0} - 1)(1 - \omega_n^{-(k_0 + \ell_0)} ))\Re(\la \hat  g)\left(o_1 + in_1, o_2 + jn_1\right)\\
& = & s_2 - \frac{4\pi^2\ell_0(k_0 + \ell_0)}{n^2}(1 + O(\max(|k_0|, |\ell_0|)n^{-1}))\Re(\la \hat g)\left(o_1 + in_1, o_2 + jn_1\right).\eeqs

%Thus, in the Euclidean norm, $\tilde{t}_{ij}^o - s$ is within $O(\frac{\Re\left(\la \hat g(o_1 + i n_1, o_2 + jn_1)\right)}{n^3})$ of the vector 
%$$\frac{4\pi^2}{n^2}\left(-k_0(k_0 + \ell_0), k_0 \ell_0, - \ell_0(k_0 + \ell_0)\right)\Re\left(\la \hat{g}(o_1 + in_i, o_2 + jn_1)\right).$$
Recall from (\ref{eq:3.28}) that 
for any $r, r'$, the ratio
\beqs \frac{(\tilde{t}_{ij}^o)_r - s_r}{(\tilde{t}_{ij}^o)_{r'} - s_{r'}} \eeqs is independent of $o, i,j$, whenever the denominator is nonzero; otherwise the numerator is zero as well.

We will need a lower bound on \beq \left(w_0, w_1, w_2\right) \cdot \left(k_0(k_0 + \ell_0), - k_0\ell_0, \ell_0(k_0 + \ell_0)\right).\eeq
Such a lower bound can be expressed using the discriminant. 
Suppose that $|\ell_0|$ is at least as large as $|k_0|$ and hence nonzero (at least one of $k_0$ and $\ell_0$ must be nonzero). We set $x_0 = \frac{k_0}{\ell_0}$.
Then, 
\beqs \left(w_0, w_1, w_2\right) \cdot \left(k_0(k_0 + \ell_0), - k_0\ell_0, \ell_0(k_0 + \ell_0)\right) & =  & 
\ell_0^2\left(w_0x_0^2 + (w_0 - w_1 + w_2) x_0 + w_2\right)\\
& = & \ell_0^2 w_0\left(x_0 + \frac{w_0 - w_1 + w_2}{2w_0}\right)^2\\ 
& + & \ell_0^2w_0\left(\frac{w_2}{w_0} - \left(\frac{w_0 - w_1 + w_2}{2w_0}\right)^2\right)\\
& \geq & \frac{\ell_0^2}{w_0}\left(w_2w_0 - \left(\frac{w_0 - w_1 + w_2}{2}\right)^2\right).\eeqs
We thus have an lower bound on $|n^2 \nabla \f \cdot (\tilde{t}_{ij}^o(g) - s)|$ of $$\left|4\pi^2\left(w_0^{-1} \left(w_2w_0 - \left(\frac{w_0 - w_1 + w_2}{2}\right)^2\right)(k_0^2 + \ell_0^2) + \frac{C\max(|k_0|, |\ell_0|)^3}{n}\right) \Re(\la \hat g)(o_1 + i n_1, o_2 + j n_1)\right|.$$
%on  \beq (-1)\sum\limits_{1 \leq i, j \leq \frac{n_2 }{n_1}}n_1^2\left(  \frac{\mathbf{dfc}(\tilde{t}_{ij}^o(g), s)}{2}\right).\eeq
This lower bound can be rewritten as 
\beqs \Omega\left(\left|\left( \left(2\left(w_0 w_1 + w_1 w_2 + w_2 w_0\right) - w_0^2 - w_1^2 - w_2^2\right)(k_0^2 + \ell_0^2) \right)\Re(\la \hat g)(o_1 + i n_1, o_2 + j n_1)\right|\right),\eeqs since $\frac{1}{w_0}$ is bounded from below by Lemma~\ref{lem:ess}, and $$\max(|k_0|, |\ell_0|) = o(n).$$ 

Thus, denoting  $ \left(2\left(w_0 w_1 + w_1 w_2 + w_2 w_0\right) - w_0^2 - w_1^2 - w_2^2\right)$ by $\Lambda$, we finally get 
\beq |n^2 \nabla \f \cdot (\tilde{t}_{ij}^o(g) - s)| & \geq &   \Omega\left(|(\Lambda + o(1))(k_0^2 + \ell_0^2)\Re(\la \hat g)(o_1 + i n_1, o_2 + j n_1)|\right)\nonumber\\
& \geq & c|\Re(\la \hat g)(o_1 + i n_1, o_2 + j n_1)|.\lab{eq:sub-3.45}\eeq

The last step used the fact that $$k_0^2 + \ell_0^2 \geq 1. $$
 %When $\ell_0$ is zero, but $k_0$ is not, the situation is analogous.
%Thus, we have just as in (\ref{eq:end...}) and the succeeding lines of the previous subsection,//COMPLETELY REWRITE//

Let us multiply $s$ by a suitable scalar, and henceforth assume that $\f(s) = 1.$

%We now follow the pattern of the previous proof, but armed with quantitative control over the defect.

By Lemma~\ref{lem:2.13},
\beqs \|f - g\|_{\dot{\CC}^1} \leq Cs_2 \sqrt{\eps_{0.5}} n.\eeqs

Also, by Lemma~\ref{lem:3.4}, we see that
if $f$ satisfies 
$$\|D_r( f)\|_\infty < \check{C} s_r \log n,$$ for $r \in \{0, 1, 2\}$ then, keeping in mind from Definition~\ref{def:3.8} that 
 $M = (s_2 \sqrt{\eps_{0.5}}  n)^{-1}$ and $\check{C} = \frac{\sqrt{\eps_{0.5}}n}{\log n},$ 
%$M^{-1}$ is less than $Cs_2 \sqrt{\eps_{0.5}}\eps_1^{-1}.$

\beq|t_{ij}^o(f) - \tilde{t}_{ij}^o(g)| & < & Cn_1^{-1} \left(\|f - g\|_{\dot{\CC}^1}  + \check{C} s_2 \log n + M^{-1}\right)\nonumber\\
                                                            & < &  Cn_1^{-1} \left( Cs_2 \sqrt{\eps_{0.5}} n  + \check{C} s_2 \log n + M^{-1}\right)\nonumber\\
& < & C \left(\frac{\sqrt{\eps_{0.5}}}{\eps_1 }\right).\lab{eq:comp2}\eeq

Recall from Lemma~\ref{lem:3.7} that :
\beqs \left|(\tilde{Q}_n(\tilde{\b} + o) - f)\right| & \leq & n^{\frac{4n_2^2}{n_1^2}}\prod\limits_{1 \leq i, j \leq \frac{n_2 }{n_1}}   |P_{n_1}(t_{ij}^o)|.
\eeqs
In view of (\ref{eq:comp2}) and Corollary~\ref{cor:lip} (which states that  $|\mathbf{f}_n(s) - \mathbf{f}_n(t)| < (2e + \eps)|s - t|$), we therefore have

\beq\left|(\tilde{Q}_n(\tilde{\b} + o) - f)\right| & \leq & n^{\frac{4n_2^2}{n_1^2}}\exp \sum\limits_{1 \leq i, j \leq \frac{n_2 }{n_1}}(n_1^2 - 1)\ln |\mathbf{f}_{n_1}(t_{ij}^o(f))|\\
& \leq & n^{\frac{4n_2^2}{n_1^2}}\exp \sum\limits_{1 \leq i, j \leq \frac{n_2 }{n_1}}(n_1^2 - 1)\ln \left(\mathbf{f}_{n_1}(\tilde{t}_{ij}^o(g)) + C\left(\frac{\sqrt{\eps_{0.5}}}{\eps_1 }\right)\right)\\
& \leq & n^{\frac{4n_2^2}{n_1^2}}\exp \sum\limits_{1 \leq i, j \leq \frac{n_2 }{n_1}}(n_1^2 - 1)\ln \left(\mathbf{f}(\tilde{t}_{ij}^o(g)) + C\left(\frac{\log n_1}{n_1}\right) + C\left(\frac{\sqrt{\eps_{0.5}}}{\eps_1 }\right)\right).\nonumber\\
\lab{eq:from-lem-3.6-2}
\eeq
 The last step above follows from Corollary~\ref{cor:...} which relates $\f_{n_1}$ and $\f$.
We simplify \beqs \ln \left(\mathbf{f}(\tilde{t}_{ij}^o(g)) +  C\left(\frac{\log n_1}{n_1}\right) + C\left(\frac{\sqrt{\eps_{0.5}}}{\eps_1 }\right)\right)\eeqs
further as follows.
\beqs
\ln \left(\mathbf{f}(\tilde{t}_{ij}^o(g)) +  C\left(\frac{\log n_1}{n_1}\right) + C\left(\frac{\sqrt{\eps_{0.5}}}{\eps_1 }\right)\right) & \leq & 
\ln \left(\mathbf{f}(\tilde{t}_{ij}^o(g))\right)  + C\left(\frac{\sqrt{\eps_{0.5}}}{\eps_1 }\right).\eeqs
By the concavity of $\f$, this is in turn less or equal to \beqs \ln \left(1 +  (\tilde{t}_{ij}^o(g) - s) \cdot \nabla \mathbf{f}(s)\right) + C\left(\frac{\sqrt{\eps_{0.5}}}{\eps_1 }\right).
\eeqs
Since $|(\tilde{t}_{ij}^o(g) - s) \cdot \nabla \mathbf{f}(s)|$ is $o_n(1)$, we can use a partial Taylor expansion to obtain
\beqs \ln \left(1 +  (\tilde{t}_{ij}^o(g) - s) \cdot \nabla \mathbf{f}(s)\right) + C\left(\frac{\sqrt{\eps_{0.5}}}{\eps_1 }\right) & \leq & \\
 (\tilde{t}_{ij}^o(g) - s) \cdot \nabla \mathbf{f}(s)) - \frac{|(\tilde{t}_{ij}^o(g) - s) \cdot \nabla \mathbf{f}(s))|^2}{3} + C\left(\frac{\sqrt{\eps_{0.5}}}{\eps_1 }\right).
\eeqs

Thus,

\beqs \sum\limits_{1 \leq i, j \leq \frac{n_2 }{n_1}}(n_1^2 - 1)\left(\ln \mathbf{f}(\tilde{t}_{ij}^o(g)) + C\left(\frac{\log n_1}{n_1}\right) + C\left(\frac{\sqrt{\eps_{0.5}}}{\eps_1 }\right)\right) \eeqs is less or equal to \beq
\sum\limits_{1 \leq i, j \leq \frac{n_2 }{n_1}}(n_1^2  - 1)\left( 
 (\tilde{t}_{ij}^o(g) - s) \cdot \nabla \mathbf{f}(s)) - \frac{|(\tilde{t}_{ij}^o(g) - s) \cdot \nabla \mathbf{f}(s))|^2}{3} + C\left(\frac{\log n_1}{n_1}\right) + C\left(\frac{\sqrt{\eps_{0.5}}}{\eps_1 }\right)\right)
\lab{eq:last2?}.\lab{eq:f-n}\eeq
For a given $g$ and $f$, we would like to guarantee the existence of an offset $o$ and a corresponding polytope 
$(\tilde{Q}_n(\tilde{\b} + o) - f)$ such that the volume bound given by (\ref{eq:from-lem-3.6-2}) is good enough for our purposes.
To do this, it suffices to show that there is an offset $o$ such that \ref{eq:f-n} is fairly negative. Recall that $g$ is the real part of a scaled complex exponential and is thus very well behaved.
The expectation of (\ref{eq:f-n}) with respect to a uniformly random offset $o$ is 
\beqs
\sum\limits_{1 \leq i, j \leq \frac{n_2 }{n_1}}(n_1^2  - 1)\left( (-1)\E_{o}  \frac{|(\tilde{t}_{ij}^o(g) - s) \cdot \nabla \mathbf{f}(s))|^2}{3} + C\left(\frac{\log n_1}{n_1}\right) + C\left(\frac{\sqrt{\eps_{0.5}}}{\eps_1 }\right)\right).\eeqs
Using (\ref{eq:2.67}) and (\ref{eq:sub-3.45}), we obtain the bound \beqs (-n^4)\E_{o}  \frac{|(\tilde{t}_{ij}^o(g) - s) \cdot \nabla \mathbf{f}(s))|^2}{3} & < & (- c)\E_o \Re(\la \hat g)^2(o_1 + i n_1, o_2 + j n_1)\\
& < & (-c )|\theta_{k_0\ell_0}|^2\\
& < &  \frac{(-c)\eps_0^2 n^4}{C_2 (\log \de^{-1})}.\eeqs

Thus, there is an offset $o$ such that (\ref{eq:f-n}) is less than $$(-cn^2) \left(\frac{\eps_0^2}{C_2 (\log \de^{-1})} - C\left(\frac{\sqrt{\eps_{0.5}}}{\eps_1 }\right)\right).$$
For this value of $o$, we see that the following is true.
\beq\left|(\tilde{Q}_n(\tilde{\b} + o) - f)\right| & \leq & n^{\frac{4n_2^2}{n_1^2}}\exp \sum\limits_{1 \leq i, j \leq \frac{n_2 }{n_1}}(n_1^2 - 1)\ln |\mathbf{f}_{n_1}(t_{ij}^o(f))|\\
& \leq & n^{\frac{4n_2^2}{n_1^2}}\exp \sum\limits_{1 \leq i, j \leq \frac{n_2 }{n_1}}(n_1^2 - 1)\ln \left(\mathbf{f}(\tilde{t}_{ij}^o(g)) + C\left(\frac{\log n_1}{n_1}\right) + C\left(\frac{\sqrt{\eps_{0.5}}}{\eps_1 }\right)\right)\nonumber\\
& \leq & \exp \left(
(-cn^2) \left(- \frac{C\log n}{n^2 \eps_1^2} + \frac{\eps_0^2}{C_2 (\log \de^{-1})} - C\left(\frac{\eps_1 \log n_1}{n} + \frac{\sqrt{\eps_{0.5}}}{\eps_1 }\right)\right)\right).\nonumber\\\lab{eq:master1}
\eeq
Let $\eps$ be an  small  positive constant depending only on $s$. We use $a \lesssim b$ to mean that $a < n^{- O(\eps)} b.$
We also have the constraint that $n_1 = o(\max(k_0, \ell_0)^{-1} n),$ since the wavelengths of the complex exponential needs to be large compared to $n_1$.  
To ensure this, it suffices to have
$\frac{C_2n_1^2\log \de^{-1}}{cn^2 \eps_0^2} = o(1),$ which is satisfied if one assumes that  \beq \de = \exp(- n^{\eps})\eeq and \beq \eps_1 \lesssim \eps_0.\eeq
We now write down a sufficient family of constraints needed to make  (\ref{eq:master1}) less than $o(1)$. 
\ben
\item $ \frac{1}{n^2 \eps_1^2} \lesssim \eps_0^2$.
\item $\frac{\eps_1}{n} \lesssim \eps_0^2.$
\item $\frac{\sqrt{\eps_{0.5}}}{\eps_1} \lesssim \eps_0^2.$
\een
Additionally, due to our use of Bronshtein's theorem, we must ensure that
\beq \eps_{0.5}^{-1} \lesssim n^2\eps_0^2, \eeq

and the number of covering polytopes for each cube is bounded above using   Lemma~\ref{lem:polytope_number}, by $n^{\frac{Cn}{\eps_1}} \leq O\left(\exp\left(C n\eps_1^{-1}\log n\right) \right)$ polytopes.

Therefore we must also ensure that \beq \frac{n}{\eps_1} \lesssim n^2 \eps_0^2.\eeq
These conditions can be satisfied as follows. 
%Our goal is to obtain an  $\ell_\infty$ concentration bound at a scale of $n^{2 - \eta}$ for some positive $\eta$. To obtain the correct exponent, a different approach seems to be required. 

Set $$\eps_1 := n^{-(\frac{1}{4})}.$$ Set $$\eps_{0.5} := n^{-\frac{3}{2}}.$$ Finally, we set $$\eps_0 = n^{-\left({\frac{1}{4} - \eps}\right)}.$$

Let $\A_1$ be the subset of $P_n(s)$ consisting of all those $f$ for which $\|f\|_{L_2^2} \geq C_2n\ln \de^{-1}$ for $\de = \exp( - n^\eps),$ which by  Lemma~\ref{lem:2.11}, has measure at most $\exp(-n^\eps)|P_n(s)|$. 
Let $\A_2$ be the subset of $P_n(s)$ consisting of all those $f$ such that for each $r\in \{0, 1, 2\}$, $\|D_r(f)\|_\infty > \check{C}\log n$, which by Lemma~\ref{lem:2.125} has measure at most $n^{-c\check{C}+2}|P_n(s)|$.

Using Lemma~\ref{lem:polytope_number}, which provides an upper bound on the number of covering polytopes $\tilde{Q}(\b, s, x)$, we have the following for all sufficiently large $n$. There is a positive constant $\eps_2$ depending only on $s$ such that
\beqs \left|\left(P_n(s) \setminus (\A_1\cup \A_2)\right) \cap B_\infty(g, \eps_{0.5}n^2)\right| < \exp(-\eps_2 n^{\frac{5 - \eps}{4}})|P_n(s)|.\eeqs
Using Bronshtein's upper bound of (\ref{eq:Br}) and Lemma~\ref{lem:2.14} with $K = P_n(s)\setminus(\A_1 \cup \A_2)$, we see that 
\beqs \left|P_n(s) \setminus (\A_1\cup \A_2 \cup B_\infty(0, \eps_0n^2)) \right| <\exp( C_{br}\eps_{0.5}^{-1})\exp(- {\eps_2 n^{\frac{5 - \eps}{4}}})|P_n(s)|.\eeqs
Therefore
\beqs |P_n(s)\setminus B_\infty\left(0, \eps_0 n^2\right)| & \leq & \left(\left(n^{-c\check{C}+2}\right) + C \exp(- n^{\frac{5 - \eps}{4}}) + \de\right)|P_n(s)|.\eeqs

It follows that the probability measure of all $g \in P_n(s)$ such that $\|g\|_2 \geq \eps_0 n^3$ is $o(1)$. By the logconcavity of the distribution of $g(v)$ for a fixed $v \in V(\T_n)$ and uniformly random $g$ from $P_n(s)$, it follows that $\E[\|g\|_2^2] \leq C \eps_0^2 n^6.$ By Lemma~\ref{lem:5.8-Nov-2020}, 
$$\p\left[\|g\|_\infty >  \left(\frac{\a \log n}{n}\right) \sqrt{\E \|g\|_2^2}\right] < n^{-c \a}$$ and
our proof is complete.\end{proof}

\begin{lem}\lab{lem:7.6}

Suppose $0 < e_0 = e_1 \leq e_2.$ then denoting $(e_0, e_1, e_2)$ by $e$, we have $(\wn_0(e), \wn_1(e), \wn_2(e)) \in \C.$

\end{lem}

\begin{proof}
By the anisotropic isoperimetric inequality (\ref{eq:2.2}), applied to $K = P_n(k)$ and $E = P_n(e)$, 
we have 
\beq S_K(E) S_E(K) \geq (n^2 - 1)^2|K| |E|.\eeq Let $k = (2, 2, 2)$.

Then,
\beqs \left(\frac{(n^2 - 1)|K|(e_0 + e_1 + e_2)}{3}\right) \sum_i \wn_i(e) \geq (n^2 - 1)|K| \sum_i \wn_i(e) e_i.\eeqs

This implies that \beq  \frac{\wn_0(e)+ \wn_1(e) + \wn_2(e)}{3} \geq \frac{ \wn_0(e) e_0 + \wn_1(e) e_1 + \wn_2(e) e_2}{e_0 + e_1 + e_2}.\lab{eq:4.2}\eeq
Observe that, $e_0 = e_1 \leq e_2$ and so by symmetry, $\wn_0(e) = \wn_1(e)$.
Thus, (\ref{eq:4.2}) implies that $\wn_2(e) \leq \wn_1(e) = \wn_0(e).$ Putting this together with Lemma~\ref{lem:ess} shows that $(\wn_0(e), \wn_1(e), \wn_2(e)) \in \C.$
\end{proof}
Thus, if $s_0 = s_1 \leq s_2$, it is always possible to choose a superdifferential $w(s)$ such that 
$w(s) \in \C$ by taking a subsequential limit of the  sequence $(w^{(n)})_{n\in \N}$. Therefore, for such $s$, the conclusion of Theorem~\ref{thm:2} holds, namely,
for any $\eps > 0$, 
we have  $$\lim_{n \ra 0} \p\left[\|g\|_\infty > n^{\frac{7}{4} + \eps}\right] = 0$$  when $g$ is randomly sampled from $P_n(s)$.

\subsection{Concentration of random honeycombs with periodic boundary conditions}\lab{ssec:honey}
As mentioned in the first section of this paper,
one obtains a random honeycomb  from a random hive by mapping the gradient of the hive on each of the unit equilateral triangles to a point in $\R^2$. These points become the vertices of the honeycomb.
Let us consider an infinite random hive with a periodic Hessian that averages to $s$, (which thus is,  after suitable transformation, an element of $P_n(s)$) and the corresponding periodic honeycomb $\tau$, and rescale the torus $\T_n$ so that the corresponding fundamental domain has a unit side length. Let us also scale down the hive by a factor of $n^2$, and interpolate it, in a piecewise linear fashion to obtain a Lipschitz function $h$ fom the unit torus $\T$ to the reals. Theorem~\ref{thm:2} from the previous subsection shows that the probability that this semiconcave Lipschitz function $h$ differs from the $0$ function  by more than $n^{-\frac{1}{4} + \eps}$ in $\ell_\infty$ tends to $0$ as $n \ra \infty.$ This implies that with probability tending to $1$, at no point on $\T$ does the gradient of $h$ have a length (measured using the Euclidean norm in $\R^2$) that is more than $O(n^{-\frac{1}{8} + \frac{\eps}2}),$ due to semiconcavity. In other words, with probability tending to $1$, every vertex of the random periodic honeycomb $\tau$ is within $O(n^{\frac{7}{8} + \frac{\eps}{2}})$ of the position of the corresponding  vertex for a honeycomb corresponding to a quadratic function with Hessian $s$. On the other hand, there exist honeycombs with a displacement  $\Omega(n),$ because, by Lemma~\ref{lem:diameter}, it is possible to have discrete gradients this large in some semiconcave functions belonging to $P_n(s)$.

\begin{comment}
\section{Acknowledgements} I am grateful to Scott Sheffield for several discussions on the topic of this paper as well as on random surfaces in general. I am also grateful to Charles Fefferman and Bo'az Klartag for helpful discussions.
 I was partly supported by a Ramanujan Fellowship, a Swarna Jayanti Fellowship, a grant from the Infosys foundation, and  support from the Department of Atomic
    Energy, Government of India, under project no. RTI4001.
    \end{comment}

%\section{Notes on editing}

%\ben
%\item Need to fix the volume upper bound for $|P_n|$. Done!
%\item remove $o$ offset.  (Decided not to.)
%\item get rates.
%\item replace scaling limit by concentration around a quadratic. Done!
%\item Add motivation from free probability.
%\item Check application of Kenyon, and see if it can give guarantees on the applicability of the main results.
%\item surface area is defined in a way that is off by a factor of $2$ in some places, since slack is defined to be twice the distance of the origin to a facet.
%\item Try submit to GAFA.
%\een

\end{document}